\documentclass[a4paper,12pt]{amsart}
\usepackage{booktabs}
\usepackage[english]{babel}
\usepackage{iftex}
\ifPDFTeX
  \usepackage[T1]{fontenc}
  \usepackage{tgpagella}
  \makeatletter
  \def\qpl@set@spacing{%
    \fontdimen2\font=.25em
    \fontdimen3\font=.2em
    \fontdimen4\font=.1em}
  \makeatother
\else
  \usepackage[no-math]{fontspec}
\fi

\usepackage{amsmath,amsthm,amssymb,amsxtra,calligra,mathrsfs}
\usepackage{graphicx}
\usepackage{tikz}
\usepackage{tikz-cd} 
\usepackage{xcolor}
\usepackage{upgreek}
\usepackage{mathtools}
\usepackage{extarrows}
\usepackage{faktor}
\usepackage{tensor}
\usepackage{comment}
\usepackage{enumitem}
\usepackage{hyperref}
\hypersetup{
    colorlinks=true, 
    citecolor=blue,
    linkcolor=magenta,
	bookmarksnumbered=true,
}

\usepackage{geometry}
\usepackage[all,cmtip]{xy}

\newcommand{\C}{\mathbb C}
\newcommand{\B}{\mathbb B}
\newcommand{\Sym}{\operatorname{Sym}}
\newcommand{\Span}{\operatorname{span}}
\newcommand{\Aut}{\operatorname{Aut}}
\newcommand{\Gr}{\operatorname{Gr}}
\newcommand{\rank}{\operatorname{rank}}
\newcommand{\tr}{\operatorname{tr}}
\newcommand{\im}{\operatorname{im}}
\newcommand{\id}{\mathrm{id}}
\newcommand{\supp}{\operatorname{supp}}

\newcommand{\cH}{\mathcal H}

\newcommand{\HS}{\mathrm{HS}}

\newcommand{\ip}[2]{\left\langle#1,#2\right\rangle}
\newcommand{\w}{\wedge}

\newtheorem{theorem}{Theorem}[section]
\newtheorem{lemma}[theorem]{Lemma}
\newtheorem{proposition}[theorem]{Proposition}

\newtheorem{definition}[theorem]{Definition}
\theoremstyle{remark}\newtheorem{remark}[theorem]{Remark}
\numberwithin{equation}{section}

\title[Mok's uniqueness problem]{On Mok's uniqueness problem of holomorphic isometries from complex unit balls of maximal dimension into classical domains}
\author{Cong Ding, Yuan Yuan}
\date{}

\begin{document}

\begin{abstract}
We prove that every holomorphic isometric embedding, with respect to the Bergman metrics, from a complex unit ball of maximal dimension into an irreducible classical bounded symmetric domain of rank at least two that is not biholomorphic to a Type IV domain is equivalent to Mok’s embedding. This completely resolves a problem of Mok. 
\end{abstract}
\maketitle

\section{Introduction}

The study of holomorphic isometric embeddings has its origins in the work
of Bochner \cite{B47} on Hermitian geometry and Calabi \cite{C53} on
isometric embeddings of complex manifolds. Calabi introduced the diastasis
and established fundamental results on the existence, rigidity, and
analytic continuation of K\"ahler immersions into complex space forms.
In particular, a full K\"ahler immersion into a fixed complex space form
is determined, up to an ambient holomorphic isometry, by the induced
metric. 

Interest in this problem was renewed by Clozel--Ullmo \cite{CU03}, 
who reduced the problem of characterizing algebraic correspondences commuting with certain Hecke correspondences to rigidity questions for germs of holomorphic isometries from a bounded symmetric domain into Cartesian products of bounded symmetric
domains with respect to Bergman metrics. They showed that  Hermitian metric rigidity
theory of Mok \cite{M87} implies total geodesy when the source is irreducible of
rank at least two. Thus, for an irreducible source, nonstandard
holomorphic isometries can occur only when the source is a complex unit ball.
Mok \cite{M12} established algebraic extension and global continuation
for germs of holomorphic isometries between bounded symmetric domains,
allowing positive normalizing constants for the Bergman metrics. Mok also
constructed non-totally-geodesic isometries of the Poincar\'e disk into
polydisks \cite{M12}, disproving the corresponding total-geodesy conjecture of
Clozel--Ullmo. The reader may refer to \cite{MN09, N10, C16, C17} for related results on holomorphic isometries from the  Poincar\'e disk into
polydisks. On the other hand, if the complex dimension of the source complex unit ball is at least 2, it is recently proved in \cite{X26, YZ26} independently that the holomorphic isometries from the complex unit ball into a bounded symmetric domain must be componentwisely isometric,  generalizing earlier results in \cite{N11, YZ12, XY20, X22}. 
The problem is thus reduced to studying holomorphic isometries from the complex unit ball into irreducible bounded symmetric domains. 

Mok had established the following fundamental existence theorem \cite{M16} by considering  
the intersection of the bounded symmetric domain with the cone of minimal rational curves through a
regular boundary point. 

\begin{theorem}[Mok]\label{thm:Mok-ball-construction}
Let $D$ be an irreducible bounded symmetric domain of rank at least two,
and let $S_D$ be its compact dual. If $\delta$ is the positive generator
of $H^2(S_D,\mathbb Z)\cong\mathbb Z$, write
$c_1(S_D)=(d+1)\delta$. Then there exists a nonstandard proper holomorphic isometric embedding $F: \mathbb{B}^d \to D$
satisfying $F^*\omega_{D}=\omega_{\mathbb{B}^d}$. 
\end{theorem}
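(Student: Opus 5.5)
Following Mok's construction described just before the statement, the plan is to realize $\mathbb B^d$ as the intersection of $D$ with the variety of minimal rational tangents through a regular boundary point.

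\textbf{Setup.} Fix a point $b\in\partial D$ lying in the smooth stratum, i.e.\ in the boundary component of maximal dimension. In the Harish-Chandra realization $D\Subset\C^N\subset S_D$, consider the union $\mathcal C_b\subset S_D$ of all minimal rational curves (projective lines for the first canonical embedding) passing through $b$. This is the cone over the variety of minimal rational tangents $\mathcal C_b(S_D)\subset\mathbb P(T_bS_D)$. In the classical cases the degree $p$ of this cone is known: $c_1(S_D)=p\,\delta$, and $\dim\mathcal C_b(S_D)=p-2$. With $p=d+1$ the affine cone therefore has dimension $p-1=d$. Hence $\Omega:=\mathcal C_b\cap D$ is a $d$-dimensional complex analytic subvariety of $D$. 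The first goal is to show it is smooth, and in fact biholomorphic to $\mathbb B^d$.

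\textbf{Key steps.} (1) Using the polydisk theorem and the $\Aut(D)$-action, move $b$ to a standard point, say $b=e_1$ on a maximal polydisk. Compute the cone explicitly in Harish-Chandra coordinates; for instance, for Type I it consists of matrices $Z$ with $Z-b$ of rank $\le 1$. (2) Show that $\Omega$ is a closed complex submanifold of $D$. To do this, use the normalizer of $b$ in $\Aut(D)$ (the parabolic stabilizer subgroup $P_b$ preserving the boundary component). Its orbits on $\Omega$ should be transitive, which makes $\Omega$ a homogeneous bounded domain in a $d$-dimensional linear section of the cone. (3) Identify $\Omega$ with $\mathbb B^d$. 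One approach is to show the induced metric is Kähler–Einstein with constant holomorphic sectional curvature. Another is to exhibit an explicit parametrization $\mathbb B^d\to\Omega$ by lines through $b$ and check that it is proper. (4) The isometry property. Pull back the Bergman kernel $K_D(Z,Z)$ along the parametrization and verify $F^*\omega_D=\omega_{\mathbb B^d}$ with the normalizations in the statement. This reduces to a polynomial identity: restricted to $\Omega$, the generic norm $h(Z,Z)$ should equal $(1-|w|^2)$ times the modulus squared of a nonvanishing holomorphic function. Then $-\log h^{p}$ restricts to $-(d+1)\log(1-|w|^2)$ plus a pluriharmonic term, which matches the Bergman potential of $\mathbb B^d$. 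Since $h$ vanishes on $\partial D$ and the cone through $b$ is tangent to the boundary along a hypersurface, the factorization should come from the one-variable computation on each minimal disk, using $h|_{\text{line}}$ quadratic in the line parameter. (5) Nonstandardness: $\Omega$ is not totally geodesic, because $b\in\overline\Omega$ is a boundary point of $D$ hit by the closure in a non-geodesic way. Equivalently, the second fundamental form is nonzero, which can be seen already on a minimal disk through $b$, a horocycle-type curve. Properness follows since $\Omega$ is closed in $D$.

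\textbf{Main obstacle.} The hardest step is (4): the exact factorization of the generic norm on the cone, uniformly across types. I would first carry it out for Type I, where $Z=b+uv^{*}$ gives a determinant computation via the matrix determinant lemma. Then I would treat Types II, III and the exceptional cases by the same rank-one structure, using Jordan triple system identities for $h(b+x,b+x)$ with $x$ a minimal tripotent direction. Here the Peirce decomposition relative to $b$ should isolate exactly $d$ directions.
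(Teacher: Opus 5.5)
Your route is the same as the paper's, and the one the paper has in mind: Mok's cone $\mathcal V_b\cap D$ at a smooth boundary point. The paper does not prove this theorem itself; it recalls the construction, cites Mok, and relies on the explicit models $M^I,M^{II},M^{III}$ of Xiao--Yuan.

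\textbf{Step (5) is wrong.} A minimal rational curve $\ell$ that meets $D$ always cuts out a \emph{totally geodesic} minimal disk. The reason is that $\Aut(D)$ acts on $S_D$ preserving minimal rational curves. After you move a point of $\ell\cap D$ to $0$, $\ell$ becomes a line $\C X$ with $X$ a minimal vector, and $\C X\cap D$ is a factor of a maximal polydisk. For instance, with $b=-E_{11}$ in Type I, the line through $b$ and $0$ gives $\{sE_{11}:|s|<1\}$. So the disks $\ell\cap D$ with $b\in\ell$ are not horocycle-type curves. They are totally geodesic in $D$, hence in $\Omega$. Their directions are exactly the ones annihilated by the second fundamental form, which is the Upmeier--Wang--Zhang characterization quoted in the introduction. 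You therefore need a different argument for nonstandardness. One option: in Harish-Chandra coordinates, a totally geodesic complex submanifold through $0$ is a linear section $D\cap V'$. For an isometry with $F(0)=0$, Lemma~\ref{lem:quadratic-common} gives $\|F_2(z)\|=\|Q_2(Az)\|$. Moreover, $Q_2$ cannot vanish identically on a $d$-dimensional subspace, because linear spaces of minimal elements have dimension $<d$ (at most $m$ in Type I). Hence $F$ is not linear, so not totally geodesic.

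\textbf{Steps (2)--(4) are promissory rather than a proof.} Transitivity of the stabilizer of $b$ on $\Omega$ is true but only asserted. Even granted, it neither identifies $\Omega$ with $\B^d$ nor gives the isometry. The mechanism you propose in (4) also does not globalize. On a line $b+tX$ with $X$ minimal, $\mathcal N(b+tX,b+tX)=\beta t+\bar\beta\bar t+\gamma|t|^2$. This gives a one-variable factorization with a line-dependent disk coordinate, and nothing in your argument assembles these into a single holomorphic coordinate on $\Omega$.

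What actually carries the proof is an explicit parametrization $F$ with $\mathcal N(F(z),F(z))=1-|z|^2$. In Type I with $b=-E_{11}$ this is $M^I$:
\begin{itemize}
\item $M^I+E_{11}=\frac{1}{1+a}\binom{1+a}{y}(1+a,\ x)$ has rank one, so $M^I(\B^d)\subset\mathcal V_b\cap D$.
\item Since $M^I$ has rank at most two, $\det\bigl(I-M^I(M^I)^*\bigr)=1-\|M^I\|^2+\|C_2(M^I)\|^2=1-|z|^2$.
\end{itemize}
Once such an identity is in hand, the rest is immediate:
\begin{itemize}
\item $g=d+1$ gives $F^*\omega_D=\omega_{\B^d}$.
\item Properness follows from $\mathcal N(F(z),F(z))\to0$ as $|z|\to1$.
\item Injectivity follows from $A_0^*F(z)=z$.
\end{itemize}
Steps (2)--(3) then become unnecessary.

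A minor terminology point: $p$ is the Fano index, i.e.\ the anticanonical degree of a line, not the degree of the cone.
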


Let us recall Mok's construction briefly. Let $D\Subset \mathbb C^n \subset S$  be the inclusions incorporating both the
Borel embedding of $D$ into $S$, the compact dual manifold of $D$, and the Harish-Chandra realization $D\Subset \mathbb C^n$ of $D$. Let $\mathcal C_{x}(S)\subset T_x(S)$ be the variety of minimal rational tangents (alias. VMRTs) of $S$ at $x\in S$, or equivalently, the variety of tangents of minimal rational curves on $S$ passing through $x\in S$. Define 
\[\mathcal{V}_x:=\{\ell, \ell \ \text{is a minimal rational curve on} \ S \ \text{through} \ x\}\]
to be the full cone swept by minimal rational curves on $S$ through $x$. Choose a smooth boundary point $q\in \partial D$ and define $V_q:=\mathcal{V}_q \cap D$. Then $V_q \subset D$ is the image of the holomorphic isometric embedding in Theorem \ref{thm:Mok-ball-construction}.

This nonstandard holomorphic isometric embedding is called the Mok embedding.
Write $g_D$ for the genus of $D$ and set $d=g_D-1$.
Mok's dimension bound in \cite{M16} states that $d$ is the maximal
ball dimension for holomorphic isometries when the invariant metrics
are normalized so that minimal disks have Gaussian curvature $-2$.
We use this conventional maximal dimension throughout. At this dimension,
the source and target Bergman metrics are the same multiple, namely
$g_D=d+1$, of these normalized metrics. Thus at the maximal dimension 
the normalized and unrescaled Bergman isometry conditions agree.

Subsequent work clarified the structure of such embeddings.
Chan--Mok~\cite{CM17} showed that the images of holomorphic isometries
for the normalized metrics arise as irreducible components of linear
sections of the minimal projective embedding of the compact dual.
Upmeier--Wang--Zhang~\cite{UWZ19} provided a Jordan-theoretic construction of
Mok embeddings and characterized them by the existence of a nonzero
tangent direction annihilating the second fundamental form.
Xiao--Yuan~\cite{XY18} supplied explicit formulas for the classical
domains. For Type~IV domains, the classification results of
Chan--Mok~\cite{CM17}, Upmeier--Wang--Zhang~\cite{UWZ19}, and
Xiao--Yuan~\cite{XY20} exhibit inequivalent isometries 
even at the maximal ball dimension. 
The uniqueness then becomes a natural question for holomorphic isometric embeddings from the complex unit ball with maximal dimension into an irreducible  bounded symmetric domain that is not biholomorphic to a Type IV classical domain, known as Mok's problem (cf. Problem 2 in \cite{M26} for the explicit formulation). 
Yang \cite{Y17} proved uniqueness for $D^I_{2,m}$ with $m\ge3$,
$D^I_{3,m}$ with $m\ge3$, and the exceptional domains of types $E_6$
and $E_7$. The purpose of this paper is to establish the following uniqueness result  for all
the remaining classical domains, thereby providing, together with
Yang's results, the affirmative solution to Mok's problem.  The interested reader may also refer to related results such as \cite{M02, M11, MN12, HY14, Y19, FHX20, MW26, DY26}.

\begin{theorem}\label{thm:classical}
Let $D$ be an irreducible classical bounded symmetric domain of rank at
least two which is not biholomorphic to a Type~IV domain, and put
$d=g_D-1$. Every holomorphic map $F:\mathbb B^d\to D$ satisfying
$F^*\omega_D=\omega_{\mathbb B^d}$ is equivalent to the Mok embedding,
where $\omega_D$ and $\omega_{\mathbb B^d}$ are the K\"ahler forms of the Bergman metrics.
\end{theorem}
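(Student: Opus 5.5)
The plan is to reduce the isometry condition to a polynomial identity, use maximality of $d$ to force a degenerate direction of the second fundamental form, and then quote the Upmeier--Wang--Zhang characterization of Mok embeddings \cite{UWZ19}.

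\textbf{Reduction to a functional identity.} By Mok's algebraic extension \cite{M12}, $F$ extends to an algebraic map. After composing with automorphisms of $\mathbb B^d$ and $D$ we may assume $F(0)=0$. Calabi's diastasis then turns $F^*\omega_D=\omega_{\mathbb B^d}$ into the identity
\[
h_D\bigl(F(z),F(z)\bigr)=(1-|z|^2)^{\lambda},
\]
where $h_D$ is the generic norm polynomial and $\lambda$ is the ratio of normalizing constants. At the maximal dimension $d=g_D-1$ this ratio is $1$, as recalled before the statement, so the identity reads
\[
h_D\bigl(F(z),F(z)\bigr)=1-|z|^2 .
\]
For the three classical families this becomes concrete:
\begin{itemize}
\item Type I: $\det(I-F\overline F^{t})=1-|z|^2$ with $F$ a $p\times q$ matrix;
\item Types II and III: the same determinant with $F$ skew-symmetric or symmetric, where for Type II one uses the Pfaffian-type generic norm.
\end{itemize}
We exclude the cases already treated by Yang \cite{Y17} and those isomorphic to Type IV, namely $D^I_{2,2}$, $D^{II}_4$ and $D^{III}_2$.

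\textbf{Expansion and the second fundamental form.} Write the generic norm as $h_D(x,x)=1-\sum_k N_k(x,x)$, where $N_k$ is the degree-$(k,k)$ part, given by sums of squared $k\times k$ minors up to sign. Comparing bihomogeneous terms in $(z,\bar z)$ gives:
\begin{itemize}
\item $\|F(z)\|^2$ minus the higher minor terms equals $|z|^2$;
\item writing $F=F_1+F_2+\cdots$ by degree, $F_1$ is an isometric linear embedding $\mathbb C^d\hookrightarrow T_0D$;
\item comparing degree-$(2,2)$ terms relates $\|F_2\|^2$ to $N_2(F_1,F_1)$.
\end{itemize}
Since $N_2$ is the squared $2\times2$ minor norm, the degree-$(2,2)$ relation expresses the second fundamental form $\sigma$ through the curvature of $D$ restricted to $F_1(\mathbb C^d)$. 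Mok's dimension bound \cite{M16} is an equality exactly when the image of $F_1$ satisfies the extremal condition. I would therefore show that at maximal $d$ the subspace $F_1(\mathbb C^d)$ must contain a minimal rational tangent (a rank-one vector) $\eta$ with $\sigma(\eta,\cdot)=0$.

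\textbf{The main obstacle.} The hard step is this last one: extracting a nonzero direction annihilating $\sigma$ from the numerical equality $d=g_D-1$. My first attempt is the following.
\begin{itemize}
\item Take a generic point and look at the holomorphic bisectional curvatures of $D$ along $F_1(\mathbb C^d)$.
\item By the Gauss equation, $\sigma=0$ in direction $\eta$ precisely when $R_D(\eta,\bar\eta,v,\bar v)$ matches the ball curvature for all $v$.
\item Average over the unit sphere of $F_1(\mathbb C^d)$ and compare with $\tr R$. This is essentially the linear algebra in Mok's proof of the dimension bound.
\item Equality in those inequalities should force $F_1(\mathbb C^d)$ to contain a rank-one vector $\eta$ and be contained in the tangent space $\mathcal T_\eta$ of the VMRT cone, which has dimension exactly $g_D-1=d$ in the non-Type-IV cases.
\end{itemize}
The Type IV exclusion should enter here: for the other domains $\mathcal T_\eta$ is the unique extremal subspace, whereas for Type IV there are other ones.

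\textbf{Conclusion.} Once we have $\eta$ with $\sigma(\eta,\cdot)=0$, and this persists at every point by homogeneity and the algebraicity of $F$, the characterization of Upmeier--Wang--Zhang \cite{UWZ19} identifies $F$ with the Mok embedding up to automorphisms. Alternatively, one can show $F(\mathbb B^d)\subset V_q$ for the boundary point $q$ obtained as the limit along the $\eta$-geodesic, and conclude by dimension count and irreducibility.
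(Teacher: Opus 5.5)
Your first and last steps agree with the paper. The first is the normalization $F(0)=0$ with $\mathcal N(F(z),F(w))=1-\langle z,w\rangle$. The last is the conclusion that once a single tangent plane $E=F_1(\mathbb C^d)$ equals $\mathcal T_\eta$ for a rank-one $\eta$ (equivalently $\sigma(\eta,\cdot)=0$), the map is Mok's up to automorphisms, as in \cite{UWZ19} and Section~\ref{sec:reconstruction}. One point suffices there, so the ``persistence by homogeneity'' is unnecessary, and no homogeneity of $F(\mathbb B^d)$ is available a priori anyway.

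The gap is the middle step, which is the whole content of Mok's problem, and the mechanism you propose cannot deliver it. At an interior point, the second-order content of the isometry condition is the Gauss equation, and it has no slack. It is equivalent to $\sigma(v,v)=TQ_2(v)$ for some linear isometry $T$ from $R=\operatorname{span}Q_2(E)$ into $E^\perp$, i.e.\ exactly to the rank condition $\dim R\le d^\#$ of Lemma~\ref{lem:quadratic-common}. Averaging an identity over the unit sphere of $E$ gives another identity. There is no pointwise inequality whose equality case is triggered by $d=g_D-1$. The extremality behind Mok's bound is a boundary phenomenon: as in Proposition~\ref{prop:boundary-scaling}, tangent planes only \emph{approach} a plane of the form $\mathcal T_\eta$ as one tends to a regular boundary point.

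Moreover, the rank condition genuinely does not force $E=\mathcal T_\eta$. Take Type III with $n=3$, which Yang does not cover, and the plane
$E=\{\left(\begin{smallmatrix}u&\xi_1&\xi_2\\ \xi_1&\varepsilon\xi_2&0\\ \xi_2&0&0\end{smallmatrix}\right):u,\xi_1,\xi_2\in\mathbb C\}$ with $\varepsilon\neq0$.
\begin{itemize}
\item Its restricted minors span $\operatorname{span}\{\varepsilon u\xi_2-\xi_1^2,\ \xi_1\xi_2,\ \xi_2^2\}$, of dimension $3=d^\#$. So a second fundamental form satisfying the Gauss equation exists, and $E$ contains the rank-one vector $E_{00}$.
\item Yet $E$ is not unitarily congruent to $E_0=\mathcal T_{E_{00}}$. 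Its rank-$\le1$ scheme has coordinate algebra $\mathbb C[x_1,x_2]/(x_1^2-\varepsilon x_2,\,x_1x_2,\,x_2^2)$, with $\mathfrak m^2\neq0$, while that of $E_0$ has $\mathfrak m^2=0$.
\end{itemize}
These are the planes $\mathcal H_b$ of Lemma~\ref{III:lem:hankel}. The paper excludes them only by the third-order (cubic) identity of Proposition~\ref{prop:cubic-common} together with the algebra analysis of Section~\ref{subsec:fourthgap}.

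Even in Type I the rank condition is only locally rigid. In $D^I_{2,3}$ the plane $M_{2,2}(\mathbb C)\oplus0$ has the single nonzero restricted minor $ad-bc$, so $\dim R=1\le d^\#=2$. But its rank-one locus is an irreducible quadric cone rather than a union of two linear spaces, so it is not of the form $\mathcal T_\eta$.

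What is missing is the two-step mechanism that replaces your hoped-for equality case:
\begin{enumerate}
\item Rescale at a regular boundary point, using algebraicity and exit through $\operatorname{Reg}(\partial D)$. This produces equivalent normalized maps whose tangent planes satisfy $E_r\to E_0$.
\item Apply local rigidity of admissible planes near $E_0$ (Propositions~\ref{I:local}, \ref{II:local-rigidity}, \ref{III:prop:local}; the last requires third-order jets). This upgrades ``approximately standard'' to ``exactly standard.''
\end{enumerate}
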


We use the following realizations of irreducible classical bounded symmetric domains other than Type IV domains.
\begin{align*}
D^I_{n,m}&=\{Z\in M_{n,m}(\C):I_n-ZZ^*>0\} {\rm~ with ~} n \leq m,\\
D^{II}_n&=\{Z\in M_n(\C):Z^t=-Z,\ I_n-ZZ^*>0\},\\
D^{III}_n&=\{Z\in M_n(\C):Z^t=Z,\ I_n-ZZ^*>0\},
\end{align*}
where $M_{n,m}(\C)$ denotes the complex $n\times m$ matrices, $M_n(\C)=M_{n,n}(\C)$, 
$Z^t$ is the transpose, $Z^*=\bar Z^t$, and $H>0$
means that the Hermitian matrix $H$ is positive definite.
By the following identification of classical domains: $D^{I}_{n, m} \cong D^{I}_{m, n}$; $D^I_{1,1}\cong D^{II}_2 \cong  D^{III}_1\cong \Delta=\B^1$; $D^{II}_3 \cong D^I_{3,1} \cong \mathbb{B}^3$; $D^{III}_2 \cong D^{IV}_3$; $D^I_{2,2} \cong D^{IV}_4$ and $D^{II}_4 \cong D^{IV}_6$, the target in the theorem reduces to Types I with
$2\le n\le m$ and $(n,m)\ne(2,2)$, to Type II with $n\ge5$, and to Type III with
$n\ge3$. 
Although the rank-two and rank-three Type I cases were already
covered in \cite{Y17}, we include them in a unified proof. 
Note that when $D$ is the classical domain $D^{I}_{n, m}, D^{II}_n$ or $D^{III}_n$, $d=n+m-1, 2n-3$ or $n$ respectively. 
The Bergman kernel on the diagonal is denoted by $K_D(Z,Z)$, and
$\omega_D=\sqrt{-1}\,\partial\bar\partial\log K_D(Z,Z)$ is the
 Bergman form. 
 Two maps $F_1,F_2:\B^d\to D$ are equivalent if
$F_2=\tau\circ F_1\circ\sigma$ for $\sigma\in\Aut(\B^d)$ and
$\tau\in\Aut(D)$.
Let \(e_1,\ldots,e_n\) be the standard basis of \(\mathbb C^n\) and the exterior product \(\Lambda^2\mathbb C^n\) has basis
$\{e_i\wedge e_j: 1 \leq i<j \leq n\}$.  
Therefore there is an identification between  the complex vector space of skew-symmetric matrices and \(\Lambda^2\mathbb C^n\)  given by the linear isomorphism
$Z  \mapsto\sum_{1 \leq i<j \leq n} Z_{ij}\,e_i\wedge e_j.$ 

\medskip

By the above realization, the next theorem
is what we will prove, which is equivalent to  Theorem \ref{thm:classical}.

\begin{theorem}\label{thm:main}
Let $D$ be an irreducible classical bounded symmetric domain of rank at least two that is not
biholomorphic to a Type IV domain and 
$F:\B^d\to D$ be a holomorphic map from the maximal dimensional ball $\B^d$ to $D$
 satisfying $F^*\omega_D=\omega_{\B^d}$. 
\begin{enumerate}[label=\textup{(\roman*)}]
\item If $D = D^I_{n,m} $ with $2\le n\le m$ and $(n,m)\ne(2,2)$, then
$d=n+m-1$ and $F$ is equivalent to
\begin{equation}\label{eq:model-I}
M^I(a,x,y)=\begin{pmatrix}a&x\\ y&\dfrac{yx}{1+a}\end{pmatrix},
\end{equation}
where $x\in M_{1,m-1}(\C)$ and $y\in M_{n-1,1}(\C)$ satisfying $|a|^2+\|x\|^2+\|y\|^2<1$.

\item If $D = D^{II}_{n} $ with $n\ge5$, then $d=2n-3$ and $F$ is equivalent to
\begin{equation}\label{eq:model-II}
M^{II}(s,u,v)=s e_1\w e_2+e_1\w u+e_2\w v
                  -\frac{u\w v}{1-s},
\end{equation}
where $u,v\in\Span(e_3,\ldots,e_n)$ and
$|s|^2+\|u\|^2+\|v\|^2<1$, by the identification with $\Lambda^2\mathbb C^n$. 

\item  If $D = D^{III}_{n} $ with $n\ge3$, then $d=n$ and $F$ is equivalent to 
\begin{equation}\label{eq:model-III}
M^{III}(a,\zeta)=
\begin{pmatrix}a& \frac{\zeta^t}{\sqrt2}\\
\frac{\zeta}{\sqrt2}&\dfrac{\zeta\zeta^t}{2(1+a)}\end{pmatrix},
\end{equation}
where $\zeta  \in M_{n-1,1}(\C)$ with $|a|^2+\|\zeta\|^2<1$. 
\end{enumerate}
\end{theorem}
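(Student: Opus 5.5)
The plan is to turn the isometry condition into a polarized functional identity, solve it order by order in a power-series expansion, and use the isotropy group of $D$ to normalize the linear data to that of the model.

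\textbf{Functional identity.} Compose with automorphisms of $\B^d$ and of $D$ so that $F(0)=0$. Then $\omega_D=-\sqrt{-1}\,g_D\,\partial\bar\partial\log\det(I-ZZ^*)$ and $\omega_{\B^d}=-\sqrt{-1}\,(d+1)\,\partial\bar\partial\log(1-\|z\|^2)$, with $g_D=d+1$. So the hypothesis $F^*\omega_D=\omega_{\B^d}$ says that $\log\det(I-F(z)F(z)^*)-\log(1-\|z\|^2)$ is pluriharmonic. Since both terms vanish at $0$ together with their pure holomorphic parts, Calabi's diastasis argument and polarization give
\[
\det\bigl(I-F(z)F(w)^*\bigr)=1-\langle z,w\rangle \qquad (z,w\in\B^d).
\]
The cases are covered as follows.
\begin{itemize}[leftmargin=1.5em]
\item For Type I, expand the left side by Cauchy--Binet as $\sum_{k\ge0}(-1)^k\langle\Lambda^kF(z),\Lambda^kF(w)\rangle$.
\item For Type III, the same expansion holds, with the domain viewed inside $M_n(\C)$.
\item For Type II, use the Pfaffian form $\det(I-ZZ^*)^{1/2}$. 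The genus is then $2(n-1)$ in the determinant normalization, and the expansion runs over $\Lambda^{2k}$-type Pfaffian minors.
\end{itemize}
Write $F=\sum_{j\ge1}F_j$ in homogeneous components and compare bihomogeneous terms in $(z,\bar w)$.

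\textbf{Order-by-order consequences.} Bidegree $(1,1)$ gives $\langle F_1z,F_1w\rangle_{\HS}=\langle z,w\rangle$. Hence $L:=F_1(\C^d)$ is a $d$-dimensional subspace and $F_1$ is an isometry onto it. Bidegree $(2,2)$ gives
\[
\langle F_2(z),F_2(w)\rangle=\langle\Lambda^2F_1(z),\Lambda^2F_1(w)\rangle .
\]
So the quadratic part is determined, up to a partial isometry, by the $2\times2$ minors of elements of $L$. The mixed bidegrees $(j,k)$ with $j\ne k$ force orthogonality relations such as $\langle F_1(z),F_2(w)\rangle=0$.

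\textbf{Main obstacle: classifying $L$.} The key step is the linear-algebra classification. Up to the isotropy action ($U(n)\times U(m)$ for Type I, $U(n)$ acting by $Z\mapsto UZU^t$ for Types II and III), I would show that the only $d$-dimensional subspaces $L$ compatible with the order-$2$ and order-$3$ identities are the tangent spaces of the models:
\begin{itemize}[leftmargin=1.5em]
\item Type I: $L=\{\begin{pmatrix}a&x\\ y&0\end{pmatrix}\}$, where $x\in M_{1,m-1}(\C)$ and $y\in M_{n-1,1}(\C)$.
\item Type II: $L=\Span(e_1\w e_j,\,e_2\w e_j)$.
\item Type III: the symmetric analogue of the Type I space.
\end{itemize}
The dimension count $d=g_D-1$ is exactly the extremal case. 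I would look at the second fundamental form, which here is the map $z\mapsto F_2(z)$ modulo $L$. The identities above bound the rank of the map $\Lambda^2L\to\Lambda^2$-minors, and show that some direction $v\in L$ annihilates the second fundamental form. Concretely, I would try to show that $L$ contains a rank-one element $v$ (the ``$a$'' direction) such that the minors $\Lambda^2F_1(v,\cdot)$ vanish. Then $L\subset$ the span of matrices sharing a row or column with $v$, which has dimension exactly $n+m-1$. The Upmeier--Wang--Zhang characterization of Mok embeddings by a nonzero tangent direction annihilating the second fundamental form then identifies $F$ with the model.

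If that route stalls, the fallback is a direct argument. Once $L$ is normalized, the functional identity recursively determines every $F_j$, because at each order the new unknown appears only through $\langle F_j(z),F_j(w)\rangle$ and lower-order data. Calabi-type uniqueness then forces $F=M^I$, $M^{II}$ or $M^{III}$ up to a unitary map, and that unitary must preserve the linear span of the model, on which it is induced by the isotropy group.

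The hardest point I expect is excluding the exotic subspaces $L$, which are exactly what produce the extra examples in Type IV. The restrictions $n\ge5$ in Type II, $(n,m)\ne(2,2)$ in Type I and $n\ge3$ in Type III should enter precisely there, through a rank count on the spaces of $2\times2$ minors or Pfaffians.
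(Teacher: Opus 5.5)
\textbf{The gap is the step you yourself call the main obstacle: classifying $L=F_1(\C^d)$.} Your setup is sound: the polarized identity $\mathcal N(F(z),F(w))=1-\langle z,w\rangle$ and its $(1,1)$, $(2,2)$ and mixed-bidegree consequences are right, and a reconstruction once $L$ is standard does exist. But you give no mechanism that produces a nonzero $v\in L$ with $\widetilde Q_2(v,\cdot)|_L=0$.

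Such a $v$ forces $L=E_0$ at once, and it is exactly the input the Upmeier--Wang--Zhang characterization needs. So assuming it amounts to assuming the theorem.

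A rank count on minors cannot supply it either:
\begin{itemize}
\item In $D^I_{2,3}$, the plane $\{Z:Ze_3=0\}\cong M_{2,2}(\C)$ has quadratic rank $1\le d^\#=2$. It is not equivalent to $E_0$, whose quadratic rank is $2$, and this rank is invariant under $Z\mapsto gZh$. For such planes $T:R\to E^\perp$ is not onto, so the usable form of the cubic identity, which requires $\dim R=d^\#$, is not available.
\item In Type III, planes of \emph{maximal} quadratic rank $d^\#$ arbitrarily close to $E_0$ need not be standard. The planes $\mathcal H_b=\bigl\{\bigl(\begin{smallmatrix}u&\xi^t\\ \xi&B_\xi\end{smallmatrix}\bigr)\bigr\}$ with $(B_\xi)_{ij}=\xi(b(e_i,e_j))$, for $b$ a small commutative associative multiplication, all pass the quadratic test. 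For $b(e_1,e_1)=\epsilon e_1$ the plane contains two rank-one lines instead of one, so it is not congruent to $E_0$. Excluding these takes the cubic identity, a fourth-order expansion in $b$, and a case analysis of finite-dimensional commutative algebras.
\end{itemize}

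\textbf{The missing idea is how to avoid a global classification altogether.} By Mok, $F$ is algebraic, extends holomorphically across some $b\in\partial\B^d$, and $F(b)$ is a regular boundary point. Rescaling along the radius gives equivalent normalized isometries $F_r=\tau\circ\tau_{F(rb)}\circ F\circ U\circ a_r$ whose tangent planes converge to $E_0$. The reason is that $\mathrm dF\circ\mathrm da_r=O(\sqrt{1-r^2})$, while $(I-Z_rZ_r^*)^{-1/2}$ blows up only along the one-dimensional kernel of $I-F(b)F(b)^*$ (two-dimensional in Type II).

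After that, only \emph{local} rigidity near $E_0$ is needed:
\begin{itemize}
\item Type I: a graph lemma, using that the cross projection is invertible on nearby planes and $m\ge3$.
\item Type II: the Zariski tangent space of $\{L:\dim\Span\{x\w y:x,y\in L\}\le d^\#\}$ at $P\w\C^n$ equals the tangent space of the orbit $\{P\w\C^n\}$, using $n\ge5$.
\item Type III: excluding $\mathcal H_b$ for $b\ne0$, as described above.
\end{itemize}

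Only once $L$ is moved exactly onto $E_0$ does your reconstruction step apply. Even then it is not purely diagonal-by-diagonal. In Types I and III one writes the quadratic lower block as $W_2=U(P)$ with $U$ unitary and $P=yx$ (resp.\ $\zeta\zeta^t/2$). The bidegree $(\,\cdot\,,2)$ terms give $(U^*+aI)W=P$, and the $(3,3)$ terms on $\{a=0\}$ force $U=\lambda I$.
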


\begin{remark}
These explicit maps $M^I, M^{II}, M^{III}$ have appeared in \cite{XY18} (cf. Equations (43),(51),(54)). 
\end{remark}

\begin{remark}
The same proof strategy is expected to extend to the exceptional domains of types \(E_6\) and \(E_7\) combining with the established tangent plane rigidity results of Yang. 
We do not carry out this extension here, since uniqueness in the exceptional cases is already obtained by Yang \cite{Y17}.
\end{remark}

We briefly discuss the main ideas in the proof. The first key input is a rescaling argument. By approaching to a regular boundary point of the target domain, we show that the tangent subspaces of  equivalent isometries approach the
tangent subspace of the model embedding. 
 The second key input is a local rigidity phenomena for tangent subspaces, meaning 
a sufficiently
close tangent subspace arising from an actual isometry is already standard
up to target automorphisms. The local rigidity follows from the jet identities derived from the isometry condition. In particular, third order jet is necessary for the Type III case.
Once the tangent subspace is normalized, the Bergman kernel
identity determines the holomorphic map explicitly, up to source and target automorphisms.
Since the whole procedure stays within the equivalence class of the model embedding, the holomorphic isometry is indeed unique. Namely, every holomorphic isometry is equivalent to the Mok embedding.
This overall strategy has already appeared in the earlier papers by Mok and Yang \cite{Y17, M18, M26}. 
In particular,  the rescaling method has played a crucial role in \cite{M10, M19, CM22} for the holomorphic isometries related problems
and the use of infinitesimal deformation of tangent subspaces after degeneration gives the key argument in Yang’s treatment of the exceptional domain of type \(E_7\) \cite{Y17}.
Moreover, our reconstruction argument is closely related to the characterization of the Mok embedding by Upmeier–Wang–Zhang \cite{UWZ19}.

This problem has an interesting natural connection with the rigidity problem of Schubert varieties. Recall that a Schubert variety $Z_w\subset S$ on a compact Hermitian symmetric space $S=G/P$ is the one for which the $G$-orbit of $G·[Z_w]$ in $\operatorname{Chow}(S)$ is projective. The Schubert variety $Z_w$ is said to be \textit{Schur rigid} if for any positive integer $r$, any subvariety $S_0\subset S$ with homology class $r[Z_w]$ must be of the form $g_1Z_w+\cdots+g_rZ_w(g_1,\cdots,g_r\in G)$. Walters and Bryant \cite{Bry00,Wa97} studied this problem by transforming it to the problem on the integral varieties of the Schur differential systems $\mathcal{R}_w$. Then $Z_w$ is Schur rigid if and only if any integral variety of $\mathcal R_w$ is the sum of $G$-translates of $Z_w$. The differential system $R_w$ contains the differential system $\mathcal B_w$ with the fiber consisting of the tangent space of all the Schubert varieties of type $w$ passing through the given point. We say that the Schubert variety $Z_w\subset S$ is
\textit{Schubert rigid} if any irreducible integral variety of $\mathcal B_w$ is a $G$-translate of $Z_w$. Schur rigidity then implies Schubert rigidity.  In our particular case, the cone swept by minimal rational curves $\mathcal V_x \subset S$ is a singular Schubert variety with isolated singularity $x$. In our proof, the final step to reconstruct the complex unit ball after fixing the tangent subspace can actually follows from the Schubert rigidity of $\mathcal V_x$ (see also \cite{M18f} for a proof via the sub-VMRT theory). Schur rigidity of singular Schubert varieties on compact Hermitian symmetric spaces has been completely settled in \cite{RT12}. We know $\mathcal V_x \subset S$ is Schur rigid if $S$ is not a hyperquadric or a rank 2 Grassmannian. This also provides a conceptual reason why Theorem \ref{thm:classical} fails for the Type IV domain, whose compact dual is a hyperquadric. As the $D^I_{2,m}$ case cannot be done by the known Schubert rigidity result, we include a self-contained proof in Section \ref{sec:reconstruction} for the reconstruction of the ball. We are also curious about the further interplay between Schubert varieties and cycles in bounded symmetric domains.

\medskip
\noindent\textbf{Notation and Conventions}
We fix the notation and conventions . 
Throughout the paper, $d$ denotes the complex dimension of the source ball.
The letters $n,m$ are reserved for dimensions: Type I uses $n\times m$
matrices, while Types II and III use $n\times n$ matrices. Thus
$d=n+m-1$, $d=2n-3$, and $d=n$, respectively.

For an integer $k\ge1$, $I_k$ denotes the $k\times k$ identity matrix,
and $e_1,\ldots,e_k$ is the standard basis of $\C^k$. The source ball is
$\B^d=\{z\in\C^d:\sum_{j=1}^d|z_j|^2<1\}$. The notation $\Sym_n(\C)=\{X\in M_{n,n}(\C):X^t=X\}$.
The matrix norm $\|X\|_{\mathrm{op}}$ is the operator norm for the
standard Euclidean norms. All other matrix norms are the Hilbert--Schmidt
norms specified below, unless stated otherwise. Namely, 
on vectors use $\langle z,w\rangle=\sum_jz_j\bar w_j$; on rectangular
and symmetric matrices use $\langle X,Y\rangle=\tr(XY^*)$;
on skew-symmetric matrices use $\langle X,Y\rangle=\tfrac12\tr(XY^*)$, where
the identification  is again $X\leftrightarrow\sum_{i<j}X_{ij}e_i\wedge e_j$ and 
this identification is isometric between the space of skew-symmetric matrices and $\Lambda^2\mathbb C^n$ equipped with the induced Euclidean metric.
For symmetric matrices, the diagonal matrix units $E_{ii}$ together
with $\frac{(E_{ij}+E_{ji})}{\sqrt2}$ for $i<j$ are orthonormal. 

For a linear subspace $L$ of a Hermitian vector space, $L^\perp$ is its
orthogonal complement and $P_L$ is the orthogonal projection onto $L$.
For a linear map $T$, its adjoint is defined by
$\langle Tx,y\rangle=\langle x,T^*y\rangle$.
The notation $\Span$ always means complex linear span, and
$\Gr(k,V)$ is the Grassmannian of $k$-dimensional complex linear
subspaces of $V$. The notation $\Sym^2V$ means the symmetric tensor
square; we specify separately when it is represented by symmetric
matrices. The notation $\Lambda^k V$ means the $k$th exterior power, and
$\operatorname{Hom}(V,W)$ is the space of complex-linear maps from
$V$ to $W$. We write $\im T$, $\ker T$, $\rank T$, and $\tr T$
for image, kernel, rank, and trace. The symbols $I$ and $\id$ denote
identity operators on the space indicated by context. The group
$U(k)=\{V\in M_k(\C):V^*V=I_k\}$ is the unitary group, and
$\Aut(\Omega)$ is the group of biholomorphic maps of a domain $\Omega$
onto itself. For a nonzero vector $v$, $[v]\in\mathbb P(V)$ denotes
its complex line. 

For a holomorphic map fixing the origin, write
\[
F(z)=Az+\sum_{j\ge2}F_j(z),\quad A=\mathrm{d}F_0,
\quad F_j(tz)=t^jF_j(z).
\]
Here $\mathrm{d}F_0=F'(0)$ is the complex differential at the origin.
For a multi-index $\alpha=(\alpha_1,\ldots,\alpha_d)\in\mathbb Z_{\ge0}^d$,
write $|\alpha|=\sum_i\alpha_i$ and $z^\alpha=\prod_i z_i^{\alpha_i}$.
Thus $F_j$ is the sum of the terms with $|\alpha|=j$, including
the usual Taylor factorial in its coefficients. The notation $O(2)$
means that the constant and linear Taylor terms vanish. In a polarized
identity, bidegree $(j,k)$ means homogeneous degree $j$ in $z$ and
degree $k$ in the independent conjugate variables $\bar w$.

\section{The  jet identities}\label{sec:common}

Fix one of the three cases in Theorem~\ref{thm:main}. Let $\mathcal M$ be
the ambient complex vector space: $M_{n,m}(\C)$ in Type I,
$\Lambda^2\C^n$ in Type II, and $\Sym_n(\C)$ in Type III.
 Write $N=\dim_\C \mathcal M$ for the target dimension,
$d$ for the source dimension, and $d^\#=N-d$ for the codimension of an
isometric tangent subspace.
Recall the following standard 
 homogeneous polynomial maps $Q_k:\mathcal M\to\mathcal H_k$ as follows (cf. \cite{BL14}). In Type I,
\[
\mathcal H_k=\operatorname{Hom}(\Lambda^k\C^m,\Lambda^k\C^n),
\quad [Q_k(Z)]_{I,J}=\det Z_{I,J},
\]
where $I=\{i_1<\cdots<i_k\}\subset\{1,\ldots,n\}$ and
$J=\{j_1<\cdots<j_k\}\subset\{1,\ldots,m\}$, and 
 $Z_{I,J}$ is the submatrix with these rows and columns.
Equip $\mathcal H_k$ with the Hilbert--Schmidt metric. Namely, the squared norm
is the sum of the squared moduli of all these minors.
In Type III we use the same construction with $m=n$ and restrict it
to symmetric matrices. We write $Q_k=C_k$ in these two cases with \(C_k(Z)\) being the \(k\)th compound matrix of \(Z\). 
 In Type II, 
\[
\mathcal H_k=\Lambda^{2k}\C^n,\quad
Q_k(Z)=\frac{Z^{\w k}}{k!}.
\]
More precisely,  
$Q_k(Z)=\frac1{k!}
\underbrace{Z\wedge Z\wedge\cdots\wedge Z}_{k\text{ factors}}$, for every 
$Z=\sum_{i<j}Z_{ij}\,e_i\wedge e_j\in\Lambda^2\mathbb C^n$.
In fact, the coordinates of \(Q_k(Z)\) are exactly the Pfaffians of the principal \(2k\times2k\) submatrices (cf. \cite{M09} for the detailed construction, which is originally due to Chevalley \cite{C97}).
Equip 
$\Lambda^{2k}\mathbb C^n$ with the induced Euclidean metric.
 Note that each $Q_k$ has degree $k$, and $Q_1$ identifies
$\mathcal M$ isometrically with its entry coordinates. Let $Q_0=1$ and the generic-norm can be written as 
\begin{equation}\label{eq:generic-common}
\mathcal N(Z,W)=\sum_{k=0}^{\ell}(-1)^k
                   \ip{Q_k(Z)}{Q_k(W)},
\end{equation}
where $\ell=n$ in Types I and III and
$\ell=\lfloor n/2\rfloor$ in Type II (cf. \cite{CM17}). Denote the genus by $g$, that is the exponent in the Bergman kernel formula. The dimensions and genus are given in the following table. 

\begin{center}
\begin{tabular}{@{}lcccc@{}}
\toprule
Type & $N$ & $d$ & $d^\#$ & $g=d+1$\\
\midrule
I & $nm$ & $n+m-1$ & $(n-1)(m-1)$ & $n+m$\\
II & $\binom n2$ & $2n-3$ & $\binom{n-2}{2}$ & $2n-2$\\
III & $n(n+1)/2$ & $n$ & $n(n-1)/2$ & $n+1$\\
\bottomrule
\end{tabular}
\end{center}

Let $D$ be one of Type I, II and III domains and $Z_0 \in D$. 
Then 
\begin{equation}\label{eq:transvection-common}
\tau_{Z_0}(Z)=(I_n-{Z_0}{Z_0}^*)^{-\frac 12}(Z-{Z_0})(I-{Z_0}^*Z)^{-1}(I_m-{Z_0}^*{Z_0})^{\frac 12}
\end{equation}
is an automorphism $D$, sends ${Z_0}$ to 0, with the inverse
$\tau_{-{Z_0}}$, and satisfies
\begin{equation}\label{eq:derivative-common}
(\mathrm{d}\tau_{Z_0})_{Z_0}(X)=(I_n-{Z_0}{Z_0}^*)^{-\frac 12}X(I_m-{Z_0}^*{Z_0})^{-\frac 12}.
\end{equation}
Here the square roots are the unique positive definite Hermitian square roots

The Bergman kernel function of $D$ is given by 
\begin{equation}\label{eq:kernels-common}
K_D(Z,Z)=K_D(0,0)\mathcal N(Z,Z)^{-g},
\end{equation}
with $g=d+1$. 
Let $F: \mathbb{B}^{d} \to D$ be a holomorphic map satisfying $F^*\omega_D=\omega_{\mathbb{B}^d}$. 
Without loss of generality, by composing the automorphism of $\mathbb{B}^d$ and $D$. assume 
 $F(0)=0$. 
It follows from
\eqref{eq:kernels-common}
and the standard reduction (cf. \cite{CU03, M12}) that
$$\mathcal N(F(z),F(z))=1- |z|^2,$$
for $z\in \B^d$. By polarization, we have 
\begin{equation}\label{eq:kernel-common}
\mathcal N(F(z),F(w))=1-\ip zw, 
\end{equation}
for $z,w\in\B^d$.

We now formulate an equivalent statement of Calabi's local rigidity theorem \cite{C53}.

\begin{lemma}\label{lem:gram-common}
Let $\Omega\subset\C^d$ be a connected open neighborhood of $0$.
For \(\nu=1,2\), let \((V_\nu,\langle\cdot,\cdot\rangle_\nu)\) be a finite-dimensional Hermitian vector space, and let
$H_\nu:\Omega\rightarrow V_\nu$
be holomorphic. Define
$V^\#_\nu:=\operatorname{span}_{\mathbb C}\{H_\nu(z):z\in\Omega\}\subseteq V_\nu.$ 
Assume
\begin{equation}\label{alice}
\langle H_1(z),H_1(w)\rangle_1 = \langle H_2(z),H_2(w)\rangle_2
\end{equation}
for all $z,w \in\Omega$. 
Then there exists a unique complex linear bijective isometry
$T: V^\#_1\rightarrow V^\#_2$
such that
\[
T H_1(z)=H_2(z),
\]
for all $z\in\Omega$.
\end{lemma}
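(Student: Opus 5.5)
The natural approach is to define $T$ on the spanning set $\{H_1(z):z\in\Omega\}$ by $T H_1(z):=H_2(z)$, extend linearly, and use the Gram identity \eqref{alice} to show this is well defined and isometric.

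Concretely, consider a finite linear combination $v=\sum_{i}c_iH_1(z_i)\in V^\#_1$ with $c_i\in\C$, $z_i\in\Omega$. Set $Tv:=\sum_i c_iH_2(z_i)$. To check that this is well defined, it suffices to show that $\sum_ic_iH_1(z_i)=0$ forces $\sum_ic_iH_2(z_i)=0$. Expanding with sesquilinearity and \eqref{alice} gives
\[
\Big\|\sum_ic_iH_2(z_i)\Big\|_2^2=\sum_{i,j}c_i\bar c_j\langle H_2(z_i),H_2(z_j)\rangle_2=\sum_{i,j}c_i\bar c_j\langle H_1(z_i),H_1(z_j)\rangle_1=\Big\|\sum_ic_iH_1(z_i)\Big\|_1^2,
\]
which yields well-definedness. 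The same computation applied to arbitrary combinations shows that $T$ is complex linear and preserves norms. By polarization it therefore preserves inner products, and in particular it is injective.

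For surjectivity, observe that the image of $T$ contains every $H_2(z)$. Since the image is a linear subspace, it contains their span $V^\#_2$. Hence $T:V^\#_1\to V^\#_2$ is a linear bijective isometry satisfying $TH_1=H_2$ on $\Omega$. Uniqueness is immediate: any linear map with $TH_1(z)=H_2(z)$ for all $z$ is determined on a spanning set of $V^\#_1$, hence everywhere.

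Note that holomorphy and connectedness of $\Omega$ play no role here; the statement is purely linear algebra. Those hypotheses matter only in applications, where the identity \eqref{alice} is obtained on a smaller set and then propagated. The only point requiring any care is well-definedness, and the norm identity above handles it. I therefore expect no real obstacle.
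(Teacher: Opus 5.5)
Your argument is correct, but it takes a different route from the paper's. The paper embeds $V_1$ and $V_2$ isometrically into a common $\C^N$ via maps $J_1,J_2$. It keeps only the diagonal consequence $\|\widetilde H_1(z)\|^2=\|\widetilde H_2(z)\|^2$ of \eqref{alice}, invokes Calabi's rigidity theorem to obtain a unitary $U\in U(N)$ with $UJ_1H_1=J_2H_2$, and then restricts $U$ to the spans.

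You instead use the full polarized identity directly. Your Gram-matrix computation shows that $\sum_ic_iH_1(z_i)\mapsto\sum_ic_iH_2(z_i)$ is well defined, linear and isometric, and surjectivity and uniqueness then follow from the spanning property. This is elementary and self-contained. It also correctly shows that, once \eqref{alice} holds for all pairs $(z,w)$, holomorphy and connectedness of $\Omega$ play no role.

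The paper's detour through Calabi discards the off-diagonal information. In effect it proves the stronger statement in which only $\|H_1(z)\|_1=\|H_2(z)\|_2$ is assumed. There holomorphy and connectedness are genuinely needed: the difference $\langle H_1(z),H_1(w)\rangle_1-\langle H_2(z),H_2(w)\rangle_2$ is holomorphic in $(z,\bar w)$, so its vanishing on the diagonal $w=z$ forces it to vanish on all of $\Omega\times\Omega$. This is presumably why the paper presents the lemma as a reformulation of Calabi's theorem. For the lemma exactly as stated, your proof is the more economical one.
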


\begin{proof}
Let $m_\nu=\dim_{\mathbb C}V_\nu, N=\max\{m_1,m_2\}.$
By choosing orthonormal bases and adding zero coordinates if necessary, define complex-linear isometric embeddings
$J_\nu:V_\nu\hookrightarrow\mathbb C^N$ satisfying
$\langle J_\nu x, J_\nu y\rangle =\langle x,y\rangle_\nu$, where $\langle \cdot, \cdot \rangle$ on $\C^N$ is the standard Euclidean inner product. 
Setting $\widetilde H_\nu=J_\nu\circ H_\nu$, then 
(\ref{alice}) implies 
$ \|\widetilde H_1(z)\|^2 =\|\widetilde H_2(z)\|^2.$
It then follows from Calabi's rigidity theorem that there exists a unitary matrix  \(U\in U(N)\) satisfying
$U J_1H_1(z)=J_2H_2(z).$
It follows that $ T=(J_2|_{V^\#_2})^{-1}\circ U\circ(J_1|_{V^\#_1}): V^\#_1\rightarrow V^\#_2$ is the desired linear bijective isometry. 
\end{proof}

The next lemma is due to  \cite{CM17, UWZ19} and we present a  re-formulation here. 

\begin{lemma}[Chan-Mok, Upmeier–Wang–Zhang]\label{lem:quadratic-common}
Let $F: \mathbb{B}^{d} \to D$ be a holomorphic map satisfying $F^*\omega_D=\omega_{\B^d}$. 
Write $F(z)=Az+F_2(z)+F_3(z)+\cdots$, where $F_j$ is homogeneous
of degree $j$. Let $E=A\C^d$ be the tangent space to  \(F(\B^d)\) in $\mathcal M$ at \(F(0)=0\) and let \(R=\operatorname{span}_{\mathbb C}Q_2(E)\subseteq\mathcal H_2\).  Then
\begin{equation}\label{eq:normal-common}
A^*F(z)=z,\quad A^*A=I_d,\quad F_j(z)\in E^\perp\quad(j\ge2).
\end{equation}
Moreover, there is an isometric embedding 
$T:R\to E^\perp$ with
\begin{equation}\label{eq:quadratic-common}
F_2(z)=TQ_2(Az),\quad \dim R\le d^\#.
\end{equation}
\end{lemma}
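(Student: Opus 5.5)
Starting from the polarized identity \eqref{eq:kernel-common}, $\mathcal N(F(z),F(w))=1-\ip zw$, I would expand both sides by bidegree in $(z,\bar w)$. The generic norm \eqref{eq:generic-common} gives
\[
\sum_{k\ge0}(-1)^k\ip{Q_k(F(z))}{Q_k(F(w))}=1-\ip zw .
\]
Here $Q_k$ is homogeneous of degree $k$ and $F(0)=0$, so $Q_k(F(z))$ vanishes to order $k$.

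\textbf{Bidegrees $(1,1)$ and $(j,1)$.} The only contribution to bidegree $(1,1)$ is $-\ip{Az}{Aw}$, which gives $A^*A=I_d$. In bidegree $(j,1)$ with $j\ge2$, only the $k=1$ term contributes, namely $-\ip{F_j(z)}{Aw}$. Since the right-hand side has no such term, $\ip{F_j(z)}{Aw}=0$ for all $w$, i.e.\ $F_j(z)\in E^\perp$. Combining with $A^*A=I_d$ yields $A^*F(z)=A^*Az=z$, which is \eqref{eq:normal-common}.

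\textbf{Bidegree $(2,2)$.} The $k=1$ term contributes $-\ip{F_2(z)}{F_2(w)}$. The $k=2$ term contributes $\ip{Q_2(Az)}{Q_2(Aw)}$, since the lowest-order part of $Q_2(F(z))$ is $Q_2(Az)$. The right-hand side vanishes in this bidegree, so
\[
\ip{F_2(z)}{F_2(w)}=\ip{Q_2(Az)}{Q_2(Aw)}\quad\text{for all } z,w .
\]
I would then apply Lemma~\ref{lem:gram-common} with $H_1(z)=Q_2(Az)\in\mathcal H_2$ and $H_2(z)=F_2(z)\in\mathcal M$. This produces a linear isometry $T$ from $\Span\{Q_2(Az)\}$ onto $\Span\{F_2(z)\}$ with $TQ_2(Az)=F_2(z)$.

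\textbf{Identifying the domain of $T$ and the bound on $\dim R$.} The span of $Q_2(Az)$ over $z\in\C^d$ is exactly $\Span Q_2(E)=R$, because $Az$ ranges over all of $E$. The span of the $F_2(z)$ lies in $E^\perp$ by the previous step. Hence $T:R\to E^\perp$ is an isometric embedding satisfying \eqref{eq:quadratic-common}. Injectivity of $T$ then gives $\dim R\le\dim E^\perp=N-d=d^\#$.

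The only delicate point is the bidegree bookkeeping. One must check that no other terms enter bidegree $(2,2)$: terms with $k\ge3$ have bidegree at least $(3,3)$, and the $k=1$ cross terms $\ip{F_1}{F_3}$ have bidegree $(1,3)$ or $(3,1)$. One must also use the correct sign convention $(-1)^k$ from \eqref{eq:generic-common}. Everything else is a routine expansion.
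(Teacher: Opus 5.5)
Your proposal is correct and is essentially the paper's proof: the part of $\mathcal N(F(z),F(w))=1-\ip zw$ of antiholomorphic degree one, which only the $k=1$ term can supply, gives \eqref{eq:normal-common}; comparing bidegree $(2,2)$ terms and then applying Lemma~\ref{lem:gram-common} gives $T:R\to E^\perp$ with $F_2(z)=TQ_2(Az)$ and $\dim R\le d^\#$. The only cosmetic difference is that the paper reads off $A^*F(z)=z$ in one step before splitting into homogeneous pieces, whereas you first treat bidegrees $(1,1)$ and $(j,1)$ separately; your use of homogeneity to identify the span of $Q_2(Az)$ over a neighborhood with $\Span Q_2(E)=R$ is exactly what is needed.
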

\begin{proof}
It follows from $E=\operatorname{im} A$ 
that \(\ker A^*=E^\perp.\)
Because $F(0)=0$, the smallest degree of $Q_k(F(w))$ is $k$.
It follows from (\ref{eq:generic-common}) and (\ref{eq:kernel-common}) that antiholomorphic degree one
can come only from $Q_1(F)=F$. It yields
$\langle F(z),Aw\rangle=\langle z,w\rangle$ for every $w$,
which is exactly $A^*F(z)=z$. 
Namely, $A^*Az+\sum_{j\ge2}A^*F_j(z)=z.$ Comparing the linear terms in $z$ 
 yields
$A^*A=I$.
Comparing the higher order terms in $z$ 
 yields $A^*F_j(z)=0,$ so $F_j(z)\in E^\perp$ for all $j \ge 2$. 

 After subtracting the
linear terms, (\ref{eq:kernel-common}) reads
\begin{equation}\label{eq:feature-common}
\ip{H(z)}{H(w)}+
\sum_{\substack{k\ge3\\k\text{ odd}}}\ip{Q_k(F(z))}{Q_k(F(w))}
=\sum_{\substack{k\ge2\\k\text{ even}}}\ip{Q_k(F(z))}{Q_k(F(w))}, 
\end{equation}
where $H(z)=F(z)-Az \in E^\perp$. By
comparing terms of bidegree \((2,2)\): degree \(2\) in \(z\) and degree \(2\) in \(\overline w\), it follows that 
$$\langle F_2(z),F_2(w)\rangle_{\mathcal M} = \langle Q_2(Az),Q_2(Aw)\rangle_{\mathcal H_2}.$$
By Lemma~\ref{lem:gram-common}, there exists a isometric embedding $T: R \to \operatorname{span}_{\C} F_2(z) \subset E^\perp$ such that $F_2(z)=TQ_2(Az)$, and $\dim R\le \dim E^\perp = d^\#$. 
\end{proof}
\begin{remark}
    The dimension bound of $R$ can be also obtained from the duality principle (cf. \cite{M26}).
\end{remark}

The next result is a slight generalization of Lemma 4.9 in \cite{UWZ19}.

\begin{proposition}\label{prop:cubic-common}
Let $\widetilde Q_2:\mathcal M\times \mathcal M\to\mathcal H_2$ be the symmetric complex-bilinear
polarization given by 
$\widetilde Q_2(X,Y)=Q_2(X+Y)-Q_2(X)-Q_2(Y)$.
If $\dim R=d^\#$, then the isometry $T:R\to E^\perp$ is onto, and the projections $P_R$ and $P_{R^\perp}$ on $\mathcal H_2$ satisfy 
\begin{align}
F_3(z)&=T P_R \widetilde Q_2(Az,F_2(z)),\label{eq:F3-common}\\
\ip{P_{R^\perp}\widetilde Q_2(Az,F_2(z))}{P_{R^\perp}\widetilde Q_2(Aw,F_2(w))}
&=\ip{Q_3(Az)}{Q_3(Aw)}.\label{eq:cubic-common}
\end{align}
\end{proposition}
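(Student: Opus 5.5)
Surjectivity of $T$ is immediate: by Lemma~\ref{lem:quadratic-common}, $T:R\to E^\perp$ is an isometric embedding, hence injective, and $\dim R=d^\#=\dim E^\perp$, so $T$ is a linear bijective isometry. In particular $E^\perp=\operatorname{span}_\C F_2(\B^d)$, and $T^{-1}=T^*$ on $E^\perp$.

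For the two identities I will compare terms of bidegree $(3,3)$, and then $(3,2)$, in \eqref{eq:feature-common}, using $H=F_2+F_3+O(4)$ and $F=Az+F_2+F_3+\cdots$. The expansions needed are:
\[
Q_2(F(z))=Q_2(Az)+\widetilde Q_2(Az,F_2(z))+O(4),\qquad Q_3(F(z))=Q_3(Az)+O(4).
\]
Here I use that $Q_2$ is a quadratic form, so $Q_2(X+Y)=Q_2(X)+\widetilde Q_2(X,Y)+Q_2(Y)$. I also need that $Q_k(F)$ has lowest degree $k$, so $Q_4(F)$ contributes only in bidegrees $(\ge4,\ge4)$.

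\textbf{Bidegree $(3,2)$.} The left side of \eqref{eq:feature-common} gives $\ip{F_3(z)}{F_2(w)}$. Odd $k\ge3$ contribute nothing, since $Q_3(F(w))$ has degree $\ge3$. The right side gives $\ip{\widetilde Q_2(Az,F_2(z))}{Q_2(Aw)}$. Since $F_2(w)=TQ_2(Aw)$ and the vectors $Q_2(Aw)$ span $R$, this reads
\[
\ip{T^*F_3(z)}{r}=\ip{\widetilde Q_2(Az,F_2(z))}{r}\quad\text{for all } r\in R,
\]
where $T^*:E^\perp\to R$. Hence $T^*F_3(z)=P_R\widetilde Q_2(Az,F_2(z))$. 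Because $F_3(z)\in E^\perp$ and $T$ is unitary onto $E^\perp$, $F_3=TT^*F_3$, which is \eqref{eq:F3-common}.

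\textbf{Bidegree $(3,3)$.} The left side gives $\ip{F_3(z)}{F_3(w)}+\ip{Q_3(Az)}{Q_3(Aw)}$. The right side gives $\ip{\widetilde Q_2(Az,F_2(z))}{\widetilde Q_2(Aw,F_2(w))}$. I need to check the possible cross terms on the right: $\ip{Q_2(Az)}{\cdot}$ would require a degree-$(\ge3)$ conjugate part paired against degree $2$, which is wrong bidegree. The degree-$3$ part of $Q_2(F)$ is exactly $\widetilde Q_2(Az,F_2)$, so these are the only terms. By \eqref{eq:F3-common} and the isometry of $T$,
\[
\ip{F_3(z)}{F_3(w)}=\ip{P_R\widetilde Q_2(Az,F_2(z))}{P_R\widetilde Q_2(Aw,F_2(w))}.
\]
Subtracting this from the full inner product, and using the orthogonal decomposition $\mathcal H_2=R\oplus R^\perp$, yields \eqref{eq:cubic-common}.

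The main obstacle is bookkeeping: verifying that no other terms of bidegree $(3,2)$ or $(3,3)$ arise. This relies on $F(0)=0$ and on the homogeneity degrees of $Q_k$. Beyond that, the argument only uses the polarization convention.
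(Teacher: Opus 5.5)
Your proposal is correct and follows the paper's proof essentially step for step. You first compare bidegree $(3,2)$ terms in \eqref{eq:feature-common} and use that $T$ is unitary onto $E^\perp$ to get $T^*F_3(z)=P_R\widetilde Q_2(Az,F_2(z))$; then you compare bidegree $(3,3)$ terms and use the isometry of $T$ together with $\mathcal H_2=R\oplus R^\perp$. Your explicit check of which terms can contribute in each bidegree, which the paper leaves implicit, is accurate.
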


\begin{proof}
By comparing terms of  bidegree $(3,2)$ in \eqref{eq:feature-common}, we get
\[
\ip{F_3(z)}{TQ_2(Aw)}=\ip{\widetilde Q_2(Az,F_2(z))}{Q_2(Aw)}.
\]
If $\dim R=d^\#$, then $T: R\to E^\perp$ is a unitary isomorphism with $T^{-1}=T^*$. 
The above identity implies 
$T^*F_3=P_R\widetilde Q_2(Az,F_2(z))$, proving \eqref{eq:F3-common}. 
By comparing terms of  bidegree $(3,3)$ in
\eqref{eq:feature-common}, we get 
\[
\langle F_3(z),F_3(w)\rangle+
\langle Q_3(Az),Q_3(Aw)\rangle
=\langle \widetilde Q_2(Az,F_2(z)),\widetilde Q_2(Aw,F_2(w))\rangle.
\]
Because \(T\) preserves inner products, (\ref{eq:F3-common}) yields
\[
\langle F_3(z),F_3(w)\rangle
=
\langle P_R \widetilde Q_2(Az,F_2(z)),P_R \widetilde Q_2(Aw, F_2(w))\rangle.
\]
Using the orthogonal decomposition \(\mathcal H_2=R\oplus R^\perp\) and applying $P_R, P_{R^\perp}$ to $\widetilde Q_2(Az,F_2(z))$, we obtain
\[
\langle Q_3(Az),Q_3(Aw)\rangle=
\langle P_{R^\perp}\widetilde Q_2(Az,F_2(z)),P_{R^\perp}\widetilde Q_2(Aw,F_2(w))\rangle.
\]
\end{proof}

\section{Rescaling and convergence of tangent planes}\label{sec:boundary}

Let $A_0=\mathrm dM_0: \C^d\to \mathcal M$ be the differential at \(0\) of the corresponding map \(M\) in Theorem \ref{thm:main}. The straightforward calculation yields 
\begin{equation}\label{eq:standard-inclusions}
\begin{aligned}
A_0^I(a,x,y)&=\begin{pmatrix}a&x\\y&0\end{pmatrix},\\
A_0^{II}(s,u,v)&=s e_1\w e_2+e_1\w u+e_2\w v,\\
A_0^{III}(a,\zeta)&=\begin{pmatrix}a&\frac{\zeta^t}{\sqrt2}\\
\frac{\zeta}{\sqrt2}&0\end{pmatrix}.
\end{aligned}
\end{equation}
We omit the superscript when the type is fixed. Each map preserves
norms by our entry and exterior-power conventions.
The image   
$E_0=A_0\C^d=T_0\bigl(M(\mathbb B^d)\bigr)$ of $A_0$ is given by 
\begin{equation}\label{eq:standard-planes}
\begin{array}{ll}
\mathrm{Type~ I}:&E_0=\left\{\begin{pmatrix}a&x\\y&0\end{pmatrix}\right\},\\[2mm]
\mathrm{Type~ II}:&E_0=\left\{
s\,e_1\wedge e_2+e_1\wedge u+e_2\wedge v
\right\}= P\w\C^n,\quad P=\Span(e_1,e_2),\\[1mm]
\mathrm{Type~ III}:&E_0=\left\{\begin{pmatrix}a&b^t\\b&0\end{pmatrix}\right\}.
\end{array}
\end{equation}
Their dimensions are precisely the corresponding values of $d$.

Let $F: \B^d \to D$ be the holomorphic map satisfying $F^*\omega_D=\omega_{\B^d}$. Mok proved that $F$ is algebraic \cite{M12}. As a consequence, there exists $b \in \partial\B^d$, such that $F$ extends holomorphically across $b$. Moreover, Mok showed that $F$ 
exits $D$ through $\operatorname{Reg}(\partial D)$ \cite{M16}. Therefore, at ${Z_\partial}=F(b)$, $I-{Z_\partial}{Z_\partial}^*$ has kernel dimension one
if $D$ is of Types I or Type III, and dimension two if $D$ is of Type II.
 We apply the following rescaling argument. 
 There exists a unitary transform  \(U\in U(d)\subset \operatorname{Aut}(\B^d)\) such that \(Ue_1=b\). 
For \(0<r<1\), define $ a_r(z_1,z') = \left( \frac{r+z_1}{1+rz_1}, \frac{\sqrt{1-r^2}\,z'}{1+rz_1} \right) \in \operatorname{Aut}(\B^d)$. Denote $ Z_r=F(rb) \in D$ and let $\tau_{Z_r} \in \operatorname{Aut}(D)$ be defined as in (\ref{eq:transvection-common}) satisfying $\tau_{Z_r}(Z_r)=0$.

\begin{proposition}\label{prop:boundary-scaling}
There exists  a unitary linear automorphism \(\tau \in\operatorname{Aut}(D)\) independent of $r \in (0, 1)$ such that 
$F_r=\tau \circ\tau_{Z_r}\circ F\circ U\circ a_r$ satisfies  $F_r(0)=0$ and $F_r^*\omega_D=\omega_{\mathbb B^d}$. 
Moreover, $\lim_{r\to 1^-}\|P_{E_r}-P_{E_0}\|_{\mathrm{op}}=0,$ where $   E_r=\mathrm d(F_r)_0(\mathbb C^d)\subset\mathcal M$ and  the projections and operator norm are taken with respect to the corresponding Hermitian inner product on \(\mathcal M\).
\end{proposition}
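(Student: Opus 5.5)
The first two assertions are formal. Since $U\in U(d)$, $a_r\in\Aut(\B^d)$, and $\tau_{Z_r},\tau\in\Aut(D)$, and Bergman forms are invariant under biholomorphisms, we get $F_r^*\omega_D=\omega_{\B^d}$. Also $a_r(0)=(r,0)=re_1$, so $F\circ U\circ a_r(0)=F(rb)=Z_r$, which $\tau_{Z_r}$ sends to $0$; a linear $\tau$ preserves $0$. The real content is the convergence of tangent planes, and $\tau$ has to be chosen for it.

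To compute $E_r$, apply the chain rule with \eqref{eq:derivative-common}:
\[
E_r=\tau\bigl((I_n-Z_rZ_r^*)^{-\frac12}\,\mathrm dF_{rb}\bigl(U\,\mathrm d(a_r)_0\C^d\bigr)\,(I_m-Z_r^*Z_r)^{-\frac12}\bigr).
\]
Here $\mathrm d(a_r)_0=\operatorname{diag}(1-r^2,\sqrt{1-r^2}I_{d-1})$. Since $E_r$ is only a subspace, rescaling each direction by any nonzero scalar does not change it. We take the boundary point $Z_\partial=F(b)$ and, by a unitary change of coordinates $Z\mapsto V Z W^t$ (or $VZV^t$ in Types II and III), arrange that $Z_\partial$ is in normal form. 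In Type I this is $Z_\partial=\operatorname{diag}(1,\sigma_2,\ldots)$ with $\sigma_j<1$ for $j\ge2$, or the analogous block form with one or two unit singular directions. This unitary is $\tau$, and it is independent of $r$.

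Next we expand as $r\to1$ using the holomorphic extension of $F$ across $b$, so that $Z_r=Z_\partial-(1-r)F'(b)b+O((1-r)^2)$. Then $(I-Z_rZ_r^*)^{-1/2}$ blows up like $(1-r)^{-1/2}$ exactly on the kernel line $e_1$ (or the kernel plane $P$ in Type II) and stays bounded on its complement. The same holds on the right side. Decompose $X\in\mathcal M$ into blocks relative to (kernel line or plane) $\oplus$ complement. The $(1,1)$ block of a vector is amplified by $(1-r)^{-1}$, the off-diagonal blocks by $(1-r)^{-1/2}$, and the complementary block stays bounded. The radial direction $\mathrm dF_{rb}(b)$ gets the weight $(1-r^2)$, and the tangential directions get $\sqrt{1-r^2}$. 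After projectivizing, the radial direction converges to the $(1,1)$ slot, provided its $(1,1)$ component is nonzero. The tangential directions converge into the off-diagonal blocks, provided their off-diagonal components span a space of dimension $d-1$. Those limits are exactly $E_0$ in \eqref{eq:standard-planes}, which is the plane spanned by the $(1,1)$ block and the off-diagonal blocks $x,y$ (resp.\ $P\w\C^n$, resp.\ the first row and column).

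The main obstacle is the nondegeneracy needed for the limit to have full dimension $d$, namely $\dim E_0=d$. We need to show that the leading coefficients do not vanish, so that no direction falls into a lower-order block and the limit plane does not collapse. For this I would use the isometry identity $\mathcal N(F(z),F(z))=1-|z|^2$ near $b$. Differentiating at $b$, the left side vanishes to exactly first order in the normal direction. This forces $\partial_b\mathcal N\ne0$, which gives a nonvanishing $(1,1)$ component of $F'(b)b$. For the tangential directions, I would use the polarized identity \eqref{eq:kernel-common} with $w=b$, i.e.\ $\mathcal N(F(z),F(b))=1-\langle z,b\rangle$. Its mixed second derivatives show that the tangential derivatives of the off-diagonal blocks, paired against the $(1,1)$ data, form a nondegenerate Hermitian form of rank $d-1$. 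I would verify this by comparing with Mok's embedding $M$, where it holds by direct computation. If the blocks only match to leading order, an alternative is to argue via Lemma~\ref{lem:quadratic-common}: any limit plane $E_\infty$ is the tangent plane of a limit isometry $F_\infty$ (the $F_r$ form a normal family, and a subsequence converges to an isometry). The block structure forces $E_\infty\subset E_0$, and $\dim E_\infty=d=\dim E_0$ then gives equality. Uniqueness of the limit yields convergence of $P_{E_r}$ in operator norm.
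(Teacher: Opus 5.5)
Your fallback argument is correct and is essentially the paper's proof. The route you lead with should be dropped, though.

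\textbf{The primary route.} The ``main obstacle'' you identify does not exist, and your proposed way around it is circular. Checking the rank-$(d-1)$ nondegeneracy ``by comparing with Mok's embedding $M$'' proves nothing, since a computation for $M$ says nothing about an unknown $F$. No nondegeneracy needs to be checked at all. For each $r$ the composite $\tau_{Z_r}\circ F\circ U\circ a_r$ is a Bergman isometry fixing $0$, so by \eqref{eq:normal-common} its differential is an isometric embedding. In the paper's notation, the rescaled vectors $Y_{r,j}=(\mathrm d\tau_{Z_r})_{Z_r}\mathrm dF_{rb}(U\,\mathrm d(a_r)_0e_j)$ are orthonormal in $\mathcal M$. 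This already rules out collapse, and it makes your normal-family limit $F_\infty$ unnecessary.

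\textbf{The moving eigenline.} Orthonormality also handles a point your sketch glosses over. $(I-Z_rZ_r^*)^{-1/2}$ blows up along the top eigenline of $Z_rZ_r^*$, which only tends to $e_1$; it is not ``exactly'' $e_1$. The paper therefore works with the moving eigenprojections (the projection $\Pi_r$):
\begin{enumerate}
\item From $\|(I-P_r^L)(I-Z_rZ_r^*)^{-1/2}\|_{\mathrm{op}}\le\delta^{-1/2}$ it gets $\|\Pi_rY_{r,j}\|\le C\delta^{-1}\sqrt{1-r^2}$.
\item Using $\|Y_{r,j}\|=1$, it gets $\|\Pi Y_{r,j}\|\le\|\Pi_rY_{r,j}\|+\|\Pi_r-\Pi\|_{\mathrm{op}}\to0$.
\item It concludes from $\|P_{\widetilde E_r}-P_K\|_{\HS}^2=2\sum_j\|\Pi Y_{r,j}\|^2$ together with $\dim K=d$.
\end{enumerate}
This needs only continuity of the eigenprojections: no rates, no asymptotic expansion of $Z_r$, and no subsequence argument.

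Your fixed-block version also works, but only after you supply one more estimate: the top eigenline approaches $e_1$ at rate $O(1-r)$. This holds by real-analyticity of $r\mapsto Z_r$ near $r=1$ and simplicity of the top singular value. Without it, your claim that the complementary block ``stays bounded'' is unjustified.

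Your choice of $\tau$, unitarily normalizing $Z_\partial$, is the same as the paper's.
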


\begin{proof}
It is a direct calculation that $F_r(0)=0$ and $F_r^*\omega_D=\omega_{\mathbb B^d}$ since $F_r$ is equivalent to $F$. 
Let
 $u_{r,1}=(1-r^2) e_1=\mathrm d(a_r)_0(e_1), 
u_{r,j}=\sqrt{1-r^2} e_j=\mathrm d(a_r)_0(e_j) $, for $2\le j\le d$ be an orthogonal basis of the tangent space of $\B^d$ at $r e_1$ with respect to the Bergman metric. 
Let $X_{r,j}=\mathrm{d}F_{(r b)}(U u_{r,j})$.
Since $F$ is holomorphic near $b$, then  $\mathrm{d}F$ is bounded near $b$ and thus
$\|X_{r,j}\|\le C\sqrt{1-r^2}\to0$ as $r \to 1^-$.
Let  $Y_{r,j}=(\mathrm{d}\tau_{Z_r})_{Z_r}X_{r,j}=(I-Z_rZ_r^*)^{-\frac 12}X_{r,j}(I-Z^*_rZ_r)^{-\frac 12}$ and \(\widetilde E_r =\operatorname{span}_{\mathbb C} \{Y_{r,1}, \cdots, Y_{r,d}\} \). 
It is easy to verify that $Y_{r, 1}, \cdots, Y_{r, d}$ is an orthonormal basis of $\widetilde E_r$ with respect to the corresponding inner product.

In Type I,
write the singular values of \(Z_r\in M_{n,m}(\mathbb C)\), with \(n\le m\), as
$s_1(r)\ge s_2(r)\ge\cdots\ge s_n(r)\ge0.$
The eigenvalues of \(Z_rZ_r^*\) are $s^2_1(r), \ldots, s^2_n(r)$ and those of \(Z_r^*Z_r\) are the same numbers, together with \(m-n\) additional zeros.
At the regular boundary value \(Z_\partial\),
$s_1(\partial)=1, s_2(\partial)<1$ since \(I-Z_\partial Z_\partial^*\ge0\) and $I-{Z_\partial}{Z_\partial}^*$ has kernel dimension one. 
As $Z_r\to Z_\partial$, by continuity of singular values, for \(r\) sufficiently close to \(1\), the largest singular value \(s_1(r)\) is simple and $s_2(r)$ satisfies $s_2(r) \leq 1-\delta$ for some $\delta\in (0, 1)$.
Define $P_r^L =\text{orthogonal projection of }\mathbb C^n \text{ onto } \ker\!\left(Z_rZ_r^*-s^2_1(r) I_n\right)$,
and $P_r^R =\text{orthogonal projection of }\mathbb C^m \text{ onto } \ker\!\left(Z_r^*Z_r-s^2_1(r) I_m\right)$.
It follows that 
$$\|(I-P_r^L)(I-Z_rZ_r^*)^{-\frac 12} \|_{\mathrm{op}}
\le\delta^{-\frac 12}$$ and 
$$\|(I-Z^*_rZ_r)^{-\frac 12}(I-P_r^R)\|_{\mathrm{op}} \le\delta^{-\frac 12}.$$
We similarly define $P^L =\text{orthogonal projection of }\mathbb C^n \text{ onto } \ker\!\left(Z_\partial Z_\partial^*- I_n\right)$,
and $P^R =\text{orthogonal projection of }\mathbb C^m \text{ onto } \ker\!\left(Z_\partial^*Z_\partial-  I_m\right)$.
For $X \in \mathcal M$, let $\Pi_r(X)=(I-P_r^L)X(I-P_r^R), \Pi(X)=(I-P^L)X(I-P^R)$ and $K=\ker ( \Pi)$.
It is obvious that $\Pi_r^2=\Pi_r$. Moreover, for any $X, Y \in \mathcal M$, 
\[
\begin{aligned}
\langle\Pi_r(X),Y\rangle
&=\operatorname{tr}((I-P_r^L) X (I-P_r^R) Y^*) =\operatorname{tr}(X (I-P_r^R) Y^* (I-P_r^L))\\
&=\operatorname{tr}\bigl(X((I-P_r^L)Y (I-P_r^R) )^*\bigr)=\langle X,\Pi_r(Y)\rangle.
\end{aligned}
\]
It follows that \(\Pi_r^*=\Pi_r.\) Therefore, 
 $\Pi_r$ is an orthogonal projections on $\mathcal M$ and so is $\Pi$ by the same argument.  Because \(P^L,P^R\) have rank one, \(K\) has dimension \(n+m-1=d\) and $P_K=I-\Pi$.
Writing $$(\Pi_r-\Pi)(X) =(P^L-P_r^L)X(I-P_r^R) +(I-P^L)X(P^R-P_r^R),$$ since  \(P_r^L\to P^L\) and \(P_r^R\to P^R,\) 
we have $$\|\Pi_r-\Pi\|_{\mathrm{op}} \le \|P_r^L-P^L\|_{\mathrm{op}} +\|P_r^R-P^R\|_{\mathrm{op}} \rightarrow0.$$
Moreover,
\begin{align*}
\|\Pi_r(Y_{r,j})\| &= \|(I-P_r^L)(I-Z_rZ_r^*)^{-\frac 12}X_{r,j}(I-Z^*_rZ_r)^{-\frac 12}(I-P_r^R) \| \\
&\le\delta^{-1/2}\|X_{r,j}\|\delta^{-1/2} \\
&\le C \delta^{-1} \sqrt{1-r^2}.
\end{align*}
Consequently, as $r \to 1^-$, 
\begin{align*}
\|\Pi(Y_{r,j})\|
\le
\|(\Pi-\Pi_r)Y_{r,j}\|+\|\Pi_r(Y_{r,j})\| \le
\|\Pi-\Pi_r\|_{\mathrm{op}}+C\delta^{-1} \sqrt{1-r^2}
\rightarrow0. 
\end{align*}
Since \(Y_{r,1},\ldots, Y_{r,d}\) are orthonormal, 
\(
P_{\widetilde E_r}(X)
=\sum_{j=1}^d
\langle X,Y_{r,j}\rangle\,Y_{r,j}
\)
for $X\in\mathcal M$. 
It then follows that, as $r \to 1^-$, 
\begin{align*}
\|P_{\widetilde E_r}-P_K\|^2
&=2d-2\operatorname{tr}(P_{\widetilde E_r}P_K)=2\sum_{j=1}^d
  \left(1-\|P_K Y_{r,j}\|^2\right)=2\sum_{j=1}^d\|\Pi(Y_{r,j})\|^2 \to 0.
\end{align*}
Finally, choose unit vectors \(u,v\) such that
$\operatorname{im}P^L=\mathbb C u, \operatorname{im}P^R=\mathbb C v,$
 complete them to orthonormal bases and form the unitary matrices
$A=(u,u_2,\ldots,u_n), B=(v,v_2,\ldots,v_m).$
Define $\tau(X)=A^*XB.$ It is easy to verify that $\tau \in \operatorname{Aut}(D)$ and  $\tau(K)=E_0$. Let $E_r=\tau(\widetilde E_r)$. 
It follows from $P_{E_r}=\tau P_{\widetilde E_r}\tau^{-1}, P_{E_0}=\tau P_K\tau^{-1}$ that
$$ \|P_{E_r}-P_{E_0}\|_{\mathrm{op}} = \|P_{\widetilde E_r}-P_K\|_{\mathrm{op}} \rightarrow0.$$

For Types II and III, the proof will be similar and the point out the difference. 
Let $P_r=$ orthogonal projection of $\mathbb C^n \text{ onto } \ker\!\left(Z_rZ_r^*-s^2_1(r) I_n\right)$. Then
$\operatorname{rank}P_r=2$ in Type II and $\operatorname{rank}P_r=1$ in Type III. 
As above, there exist
$\delta>0$ and $r_0<1$ such that every eigenvalue of
$Z_rZ_r^*$ on $\ker P_r$ is at most $1-\delta$
whenever $r_0<r<1$.
It follows that $(I-Z_rZ_r^*)^{-\frac 12}$ commutes with $P_r$ and
$$ \bigl\|(I-P_r)(I-Z_rZ_r^*)^{-\frac 12} \bigr\|_{\mathrm{op}} \le\delta^{-\frac 12}.$$
Define $\Pi_r(X)=(I-P_r)X(I-P_r)^t$ and $\Pi(X)=(I-P)X(I-P)^t$, where $P$ is the orthogonal projection onto $\ker(I-Z_\partial Z_\partial^*)$. 
It is easy to verify that the dimension of \(K=\ker\Pi\) is always $d$.  
Using 
$$ \Pi_r(Y_{r,j}) =\bigl[(I-P_r) (I-Z_rZ_r^*)^{-\frac 12} \bigr]  X_{r,j} \bigl[(I-P_r)(I-Z_rZ_r^*)^{-\frac 12}\bigr]^t,$$
we have $$\|\Pi_r(Y_{r,j})\| \le C\delta^{-1}\sqrt{1-r^2}
\rightarrow0.$$
Since $P_r\rightarrow P$ as $r \to 1^-$,  we have $\|\Pi_r-\Pi\|_{\mathrm{op}}  \rightarrow0.$
Therefore, $\|\Pi(Y_{r,j})\| \rightarrow0.$ This further implies $\|P_{\widetilde E_r}-P_K\|^2 \to 0.$
Take an orthonormal basis of  $\operatorname{im}P=\ker(I-Z_\partial Z_\partial^*)$. 
In Type II, the basis is \(u_1, u_2\)  and  in Type III, the basis is \(u_1\).
Complete them to orthonormal bases and form the unitary matrices $A= (u_1, u_2,..., u_n)$. 
Then $\tau(X)=A^*X\overline A$ is the desired element in $ \operatorname{Aut}(D)$ which yields $E_0=\tau(K)$, $E_r=\tau(\widetilde E_r)$ and $ \|P_{E_r}-P_{E_0}\|_{\mathrm{op}} \rightarrow0.$
This finishes the proof of Types II and Type III.
\end{proof}

\section{Local rigidity of the tangent plane}
\label{sec:exact-tangent}

We have shown $E_r=\mathrm{d}(F_r)_0(\C^d)\to E_0$ and $F_r(0)=0$.
We now prove that one fixed map in this family can be normalized to
have exactly the standard derivative. Let $A_0:\C^d\to \mathcal M$ be the
standard linear isometry defined in \eqref{eq:standard-inclusions};
thus $E_0=A_0(\C^d)$.

For any $d$-plane $E\subset \mathcal M$, define its quadratic rank by
\[
\mathfrak r(E)=\dim_\C\Span\{Q_2(X):X\in E\}.
\]

For Types I and III, $\dim R$ is also the dimension of the span of
the restricted quadratic minor polynomials. In Type II it is
$\dim\Span\{X\w Y:X,Y\in E\}$, by polarization of $X\w X$.

Every plane $E_r$ satisfies $\mathfrak r(E_r)\le d^\#$ by
Lemma~\ref{lem:quadratic-common}. In Types I and II we classify nearby
planes satisfying this bound. In Type III the bound alone permits
additional planes; the cubic identity in Proposition \ref{prop:cubic-common} excludes those
additional possibilities. 

\begin{proposition}
\label{prop:exact-tangent}
Let $F:\B^d\to D$ be a holomorphic map  satisfying $F^*\omega_D=\omega_{\B^d}$. 
Then there exist
$\sigma_0\in\Aut(\B^d)$ and $\tau_0\in\Aut(D)$ such that
\[
G=\tau_0\circ F\circ\sigma_0,
\quad G(0)=0,\quad \mathrm{d}G_0=A_0.
\]
\end{proposition}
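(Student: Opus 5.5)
We prove Proposition~\ref{prop:exact-tangent} by combining Proposition~\ref{prop:boundary-scaling} with a local rigidity statement for $d$-planes near $E_0$. By Proposition~\ref{prop:boundary-scaling}, the maps $F_r$ satisfy $F_r(0)=0$ and $F_r^*\omega_D=\omega_{\B^d}$, and their tangent planes $E_r$ converge to $E_0$. Lemma~\ref{lem:quadratic-common} gives two further facts: $A_r=\mathrm d(F_r)_0$ is an isometry ($A_r^*A_r=I_d$), and $\mathfrak r(E_r)\le d^\#$. It therefore suffices to prove the following claim.

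\emph{Claim.} There is a neighborhood $\mathcal U$ of $E_0$ in $\Gr(d,\mathcal M)$ such that every $E\in\mathcal U$ arising as the tangent plane of an isometry lies in the orbit $\Span\{E_0\}$ under the isotropy group $K_0=\{X\mapsto UXV\}\subset\Aut(D)$ (with $V=U^t$ in Types II and III).

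Granting the claim, pick $r$ close to $1$ and $k\in K_0$ with $k(E_r)=E_0$. Then $k\circ A_r$ and $A_0$ are both linear isometries $\C^d\to E_0$, so they differ by a unitary $W\in U(d)\subset\Aut(\B^d)$. Setting $G=k\circ F_r\circ W^{-1}$ gives $G(0)=0$ and $\mathrm dG_0=A_0$.

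To prove the claim, we parametrize nearby planes as graphs $E_\Phi=\{X+\Phi(X):X\in E_0\}$ with $\Phi\in\operatorname{Hom}(E_0,E_0^\perp)$ small. The orbit $K_0\cdot E_0$ is a submanifold through $E_0$, and its tangent space at $E_0$ is the image of $\mathfrak k_0\to\operatorname{Hom}(E_0,E_0^\perp)$, $\xi\mapsto P_{E_0^\perp}\circ\xi|_{E_0}$. On the other hand, $\{E:\mathfrak r(E)\le d^\#\}$ is an algebraic subvariety, cut out by the vanishing of all $(d^\#+1)$-minors of the matrix of coefficients of $Q_2|_E$.

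A direct check shows $\mathfrak r(E_0)=d^\#$. For Type I, the $2\times2$ minors on $E_0$ are $x_jy_i-a\cdot 0$, which give $(n-1)(m-1)$ independent monomials; Types II and III are analogous. Because the rank is maximal allowed, near $E_0$ the condition $\mathfrak r\le d^\#$ says that the extra quadrics produced by perturbing $E_0$ lie in the span of the existing ones.

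The key step is to linearize this condition at $\Phi=0$. The first-order variation of $Q_2|_{E_\Phi}$ is $\widetilde Q_2(X,\Phi X)$, and the linearized condition is
\[
P_{R_0^\perp}\widetilde Q_2(X,\Phi X)\in\Span\{\text{quadrics in }X\text{ vanishing on } \ldots\},
\]
or, more concretely, that $P_{R_0^\perp}\widetilde Q_2(X,\Phi X)$ is a quadratic polynomial in $X$ lying in an explicit subspace. We then compute the solution space $T$ of $\Phi$ and show $\dim T=\dim K_0\cdot E_0$. For Type I, $\Phi$ sends $(a,x,y)$ to the lower-right block, and the condition should force $\Phi(a,x,y)=\alpha\, y x'+ y'' x\cdots$, matching the infinitesimal orbit $\{\xi_L E_{11}\ldots\}$. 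If the variety is reduced along the orbit, it is locally equal to the orbit, and a submanifold of the same dimension through $E_0$ coincides with the variety near $E_0$. We expect to verify reducedness by exhibiting explicit minors whose differentials cut out exactly $T$, which gives transversality.

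The main obstacle is Type III, as the paper itself signals: there $\mathfrak r\le d^\#$ admits nearby planes outside the orbit, so the linearized space from $Q_2$ alone is too large. For Type III we therefore add the cubic identity~\eqref{eq:cubic-common}, which applies because $\dim R=d^\#$ is forced near $E_0$ by lower semicontinuity of rank together with the upper bound. Linearizing this identity in $\Phi$ and in the second jet $F_2=TQ_2(Az)$ (with $T$ determined up to the data) should cut the extra directions down to the orbit tangent. If linearization is degenerate at some point, we would instead classify exactly the planes with $\mathfrak r=d^\#$ near $E_0$, which should form a finite union of explicit families, and rule out the non-orbit families directly using~\eqref{eq:cubic-common}.
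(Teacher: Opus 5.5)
Your outer reduction matches the paper's. You rescale via Proposition~\ref{prop:boundary-scaling}, use local rigidity near $E_0$ to get a unitary $k$ with $k(E_r)=E_0$, and absorb $A_0^*kA_r\in U(d)$ on the source.

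For Types I and II your linearization plan is sound. In Type II it is exactly the paper's argument: a Zariski tangent computation followed by the implicit function theorem. In Type I the same computation also works, as an alternative to the paper's graph lemma (Lemma~\ref{I:graph}). Write the lower block as $aB+\mathcal L(x)+\mathcal K(y)$. The non-corner minors $x_jW_{ik}-x_kW_{ij}$ and $y_iW_{kj}-y_kW_{ij}$ force the first variation to be $ux+yv$ with $B=0$; killing $B$ uses $t\ge2$. This space has dimension $n+m-2=\dim(\mathbb P^{n-1}\times\mathbb P^{m-1})$, which is the orbit dimension. Your guessed form of $\Phi$ there is not right as written, but it is repairable.

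The genuine gap is Type III, where the substance of the proposition lies. Linearizing the cubic identity gives nothing. At $E_0$ both sides of \eqref{eq:cubic-common} are squared norms of vectors that vanish identically ($Q_3(A_0z)=0$ because $A_0z$ has rank at most two), so the first variation is zero for every deformation. Second order does not help either. In the expansion of the defect, the contributions $\pm\|\mathsf K(C_\xi,\xi\xi^t)\|^2$ from the plane deformation cancel exactly. The only surviving term, $2\|\mathsf S_\xi(V\xi\xi^t)\|^2$ with $V=\widetilde T_b-I$, constrains the non-unique isometry $T_b$, not the plane.

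The paper's argument uses three ingredients your proposal lacks:
\begin{enumerate}
\item the normal form $\cH_b$ for planes with $\mathfrak r=d^\#$, where $b$ is a small commutative associative multiplication (Lemma~\ref{III:lem:hankel});
\item control of $T_b$: phase normalization, $\widetilde T_b\to I$, and $\|\widetilde T_b-I\|=O(\|b\|^2)$ (Proposition~\ref{prop:satisfy-order-two-expansion});
\item the fourth-order coefficient $Q_{c,K}$ of the defect, whose vanishing forces $c=0$. This is proved through the structure theory of commutative algebras: annihilators of the nilpotent part, at most one character, derivations at a character, and the idempotent decomposition.
\end{enumerate}

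Your fallback is also not available. You propose to classify the planes with $\mathfrak r=d^\#$ as a finite union of explicit families and exclude each one. But these planes are parametrized by all small commutative associative multiplications on $\mathbb C^m$. Already every symmetric $b:\Sym^2W_1\to N$ with $b(N,\cdot)=0$, $W=W_1\oplus N$, qualifies. This family has continuous moduli and no finite explicit classification in general. Even for a single candidate plane, you would still have to show that no isometry $T_b$ satisfies the cubic identity, which is exactly the degenerate problem above.
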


We first prove the needed local assertion for each type, and then
complete the proof of this proposition in
Section~\ref{sec:exact-tangent-assembly}.
\subsection{Type I: the graph lemma and local tangent rigidity}
\label{I:section}

Let \(2\le n\le m\), with \((n,m)\ne(2,2)\), and set
\[
 s=n-1,\quad t=m-1,\quad
 d=n+m-1=1+s+t,\quad d^\#=nm-d=st.
\]
Thus \(s\ge1\) and \(t\ge2\). In the ambient space \(M_{n,m}(\C)\),
we use block coordinates
\[
 Z=\begin{pmatrix}a&x\\y&W\end{pmatrix},\quad
 \begin{gathered}
 a\in\C,\quad x\in M_{1,t}(\C),\\
 y\in M_{s,1}(\C),\quad W\in M_{s,t}(\C).
 \end{gathered}
\]
The cross plane and its coordinate projection are
\[
 E_0=\left\{\begin{pmatrix}a&x\\y&0\end{pmatrix}:a,x,y\right\},
\]
\[
 \begin{aligned}
 \pi:M_{n,m}(\C)&\rightarrow
       \C\oplus M_{1,t}(\C)\oplus M_{s,1}(\C),\\
 \pi\begin{pmatrix}a&x\\y&W\end{pmatrix}&=(a,x,y).
 \end{aligned}
\]

For a linear subspace \(E\subset M_{n,m}(\C)\), write
\[
 r(E)=\dim_\C\Span\{\text{two-by-two minors restricted to }E\},
\]
where the span is taken in \(\Sym^2(E^*)\), the space of homogeneous
quadratic polynomials on \(E\). The quadratic identity in Lemma \ref{lem:quadratic-common} implies
\(r(E)\le d^\#=st\) when \(E\) is the tangent subspace of a normalized
isometry. We will prove a local classification using only this
inequality and the invertibility of \(\pi|_E\).

\subsubsection{The graph lemma}

\begin{lemma}
\label{I:graph}
Suppose \(E\subset M_{s+1,t+1}(\C)\) has dimension \(1+s+t\),
\(\pi|_E\) is an isomorphism, and \(r(E)\le st\), where
\(s\ge1\) and \(t\ge2\). Then there are a column
\(u\in M_{s,1}(\C)\) and a row \(v\in M_{1,t}(\C)\) such that
\begin{equation}\label{I:graphformula}
 E=\left\{\begin{pmatrix}a&x\\y&ux+yv-auv\end{pmatrix}:a,x,y\right\}.
\end{equation}
Equivalently, for the nonzero column \(\widehat{u}=\binom1u\) and row
\(\widehat{v}=(1,v)\),
\begin{equation}\label{I:lineform}
 E=\widehat{u}\,M_{1,m}(\C)+M_{n,1}(\C)\,\widehat{v}.
\end{equation}
The first summand consists of all matrices with columns in \(\C \widehat{u}\);
the second consists of all matrices with rows in \(\C \widehat{v}\).
\end{lemma}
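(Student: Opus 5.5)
The plan is to parametrize $E$ as a graph, reduce by the left–right $\mathrm{GL}$-action on $M_{n,m}(\C)$ to a normal form, and then compare quadratic monomials. Since $\pi|_E$ is an isomorphism, $E=\{Z(a,x,y)\}$ with lower-right block $W=L(a,x,y)$ for a linear map $L$. Write
\[
W_{ij}=\alpha_{ij}a+\beta_{ij}(x)+\gamma_{ij}(y),
\]
with $\beta_{ij}$ a linear form in $x$ and $\gamma_{ij}$ a linear form in $y$. The restricted $2\times2$ minors fall into four families:
\begin{itemize}
\item[(1)] $aW_{ij}-x_jy_i$;
\item[(2)] $x_jW_{ik}-x_kW_{ij}$, using row $1$;
\item[(3)] $y_iW_{kj}-y_kW_{ij}$, using column $1$;
\item[(4)] minors of $W$ alone.
\end{itemize}
The $st$ minors in family (1) are linearly independent, because their components along the monomials $x_jy_i$ are. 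Hence $r(E)\le st$ forces every other minor to lie in
\[
S:=\Span\{aW_{ij}-x_jy_i\}\subset a\cdot E^*\oplus\Span\{x_jy_i\}.
\]
Moreover, an element $\sum c_{ij}(aW_{ij}-x_jy_i)$ of $S$ is determined by its $xy$-part, which is $-\sum c_{ij}x_jy_i$. This gives a rigid set of polynomial identities to exploit.

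First I use that $S$ contains no $xx$- or $yy$-monomials. For a family-(2) minor, the $xx$-part vanishing gives $x_j\beta_{ik}(x)=x_k\beta_{ij}(x)$ for all $j\ne k$. Since $t\ge2$, unique factorization forces $\beta_{ij}(x)=u_ix_j$ for a scalar $u_i$. When $s\ge2$, the family-(3) minors give $\gamma_{ij}(y)=y_iv_j$ in the same way. When $s=1$, $y$ is a scalar, so $\gamma_{1j}(y)=v_jy$ holds trivially. Thus $W=ux+yv+aw$ for some $w\in M_{s,t}(\C)$.

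Next I normalize using the maps $Z\mapsto gZh$ with
\[
g=\begin{pmatrix}1&0\\ -u&I_s\end{pmatrix},\qquad h=\begin{pmatrix}1&-v\\0&I_t\end{pmatrix}.
\]
These act linearly on the space of $2\times2$ minors via $C_2(g)$ and $C_2(h)$, so $r$ is preserved. They also preserve the first row and column up to a linear change of $(a,x,y)$, so $\pi|_E$ remains an isomorphism. A direct computation (the same one that shows the matrices $\begin{pmatrix}1&0\\u&I\end{pmatrix}E_0\begin{pmatrix}1&v\\0&I\end{pmatrix}$ have $W=ux'+y'v-auv$ after reparametrizing $x'=x+av$, $y'=y+ua$) reduces us to the case $u=0$, $v=0$. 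There $W=aw$ with $w$ replaced by $w+uv$.

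In this normalized case, a family-(2) minor equals $a(x_jw_{ik}-x_kw_{ij})$ and has zero $xy$-part. By the uniqueness remark above, it can lie in $S$ only if it is $0$. So $x_jw_{ik}=x_kw_{ij}$ for all $j\ne k$, and since $t\ge2$ this forces $w=0$. Transforming back gives exactly \eqref{I:graphformula}, with $w=-uv$. The form \eqref{I:lineform} then follows by expanding $\widehat u\,(a,x)+(0,y)^{\!t}$-type sums: the matrices with columns in $\C\widehat u$ and rows in $\C\widehat v$ together span the graph, and a dimension count ($m+n-1$) finishes it.

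The main obstacle is the bookkeeping in the first step, namely making rigorous that membership in $S$ kills all non-$xy$ quadratic content outside $a\cdot E^*$. This needs $S\cap(a\cdot E^*)=0$ modulo the $xy$-part. I would verify it by writing $S$ as the image of the injective map $c\mapsto\sum c_{ij}(aW_{ij}-x_jy_i)$ and reading off the $x_jy_i$ coefficients. A further small point is the case $s=1$, where family (3) is empty; there I rely only on family (2), with the $\gamma$-part handled by $y$ being scalar.
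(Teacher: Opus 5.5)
Your proof is correct and follows the paper's argument: the $st$ corner minors $aW_{ij}-x_jy_i$ form a basis of the span and are determined by their $xy$-coefficients; the $xx$- and $yy$-parts of the row and column minors then force $W=ux+yv+aw$; and finally $w=-uv$. The only difference is the last step. You use the invariance of $r$ under $Z\mapsto gZh$ to reduce to $u=v=0$, where the row minors have zero $xy$-part and must vanish. The paper instead stays in the original coordinates, expands $R_{i;jk}=-v_kP_{ij}+v_jP_{ik}$ via the $yx$-coefficients, and compares $ax$-parts. Your normalization has the small bonus that \eqref{I:lineform} follows immediately from $E=g^{-1}E_0h^{-1}$, with no dimension count needed.
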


\begin{proof}
Because \(\pi|_E\) is a linear isomorphism, each triple \((a,x,y)\)
has a unique inverse image in \(E\). Thus
\[
 E=\left\{\begin{pmatrix}a&x\\y&W(a,x,y)\end{pmatrix}:a,x,y\right\},
 \quad W(a,x,y)=aB+\mathcal L(x)+\mathcal K(y),
\]
where \(B\in M_{s,t}(\C)\), and
\[
 \mathcal L:M_{1,t}(\C)\rightarrow M_{s,t}(\C),
 \quad
 \mathcal K:M_{s,1}(\C)\rightarrow M_{s,t}(\C)
\]
are linear maps. All polynomial comparisons below are in the independent
coordinates \(a,x_1,\ldots,x_t,y_1,\ldots,y_s\) on \(E\).

For \(1\le i\le s\), \(1\le j\le t\), the minor on rows
\(1,i+1\) and columns \(1,j+1\) is
\[
 P_{ij}=aW_{ij}-y_ix_j.
\]
Note that $W_{ij}$ are linear functions with respect to $(a,x,y)$ and $aW_{ij}$ have no mixed $yx$-part. These \(st\) polynomials are linearly independent. Because
\(r(E)\le st\), the \(P_{ij}\) form a basis for the span of all
restricted two-by-two minors. Every other minor is consequently a
constant linear combination of these corner minors.

Fix \(i\). For distinct \(j,k\), consider
\begin{equation}\label{equ:row minor}
    R_{i;jk}=x_jW_{ik}-x_kW_{ij},
\end{equation}
the minor on rows \(1,i+1\) and columns \(j+1,k+1\), with the
indicated order. No \(P_{ij}\) has a term depending quadratically on
\(x\) alone. Comparing such terms in \(R_{i;jk}\) yields
\begin{equation}\label{I:Cidentity}
 x_j(\mathcal L(x))_{ik}-x_k(\mathcal L(x))_{ij}=0
 \quad(j\ne k).
\end{equation}
Let \(\mathcal L_i(x)\) be row \(i\) of \(\mathcal L(x)\).
From \eqref{I:Cidentity}, there is a scalar \(u_i\) such that
\(\mathcal L_i(x)=u_ix\). Collecting the rows yields
\begin{equation}\label{I:Cform}
 \mathcal L(x)=ux,\quad u=(u_1,\ldots,u_s)^t.
\end{equation}

When \(s\ge2\), applying the analogous argument to first-column minors
\[
 y_iW_{kj}-y_kW_{ij}\quad(i\ne k),
\]
there is a fixed row \(v=(v_1,\ldots,v_t)\) such that
\begin{equation}\label{I:Dform}
 \mathcal K(y)=yv.
\end{equation}
Note that (\ref{I:Dform}) holds trivially when $s=1$.
It remains to find \(B\). Substituting \eqref{I:Cform} and
\eqref{I:Dform}, the mixed \(yx\) part of \(R_{i;jk}\) is
\(y_i(v_kx_j-v_jx_k)\). The mixed \(yx\) coefficients uniquely
determine a polynomial's expansion in the corner-minor basis $P_{ij}$. Therefore
\begin{equation}\label{equ:Rijk in basis}
    R_{i;jk}=-v_kP_{ij}+v_jP_{ik}.
\end{equation}
Compare the $ax$ part in (\ref{equ:row minor}) and (\ref{equ:Rijk in basis}). We find
\[
 x_j(B_{ik}+u_iv_k)-x_k(B_{ij}+u_iv_j)=0.
\]
Since $t\geq 2$, we can always find $j\neq k$. Then \(B_{ik}=-u_iv_k\), and hence \(B=-uv\), proving
\eqref{I:graphformula}.

To prove \eqref{I:lineform}, decompose each graph matrix as
\[
 \begin{pmatrix}a&x\\y&ux+yv-auv\end{pmatrix}
 =\binom1u(a,x)+\binom0{y-au}(1,v).
\]
This leads inclusion in the indicated sum. Its summands have dimensions
\(m=t+1\) and \(n=s+1\). Their intersection is \(\C \widehat{u}\widehat{v}\): a matrix
in the intersection has both its column space contained in \(\C \widehat{u}\)
and its row space contained in \(\C \widehat{v}\), and hence is a scalar multiple
of \(\widehat{u}\widehat{v}\). The sum has dimension \(m+n-1=d\), so the inclusion is
an equality.
\end{proof}

\begin{remark}
    The condition \(t\ge2\) is essential to this graph lemma. For
\(D^I_{2,2}\), one would have \(s=t=1\), and the plane
\[
 E=\left\{\begin{pmatrix}a&x\\y&a\end{pmatrix}:a,x,y\in\C\right\}
\]
has invertible cross projection and \(r(E)=1=st\), since its only
minor is the nonzero polynomial \(a^2-xy\). Nevertheless it does not
have the form \eqref{I:graphformula}. Thus, this excluded case cannot be
inferred from the same graph argument.
\end{remark}

\subsubsection{Local rigidity near the cross plane}

\begin{proposition}[Type I local rigidity]\label{I:local}
There is a neighborhood \(\mathscr U_I\) of the standard plane $E_0$
in the Grassmannian $\Gr(d,M_{n,m}(\C)))$ such that every \(E\in\mathscr U_I\) with
\(r(E)\le st\) admits unitary matrices \(L\in U(n)\) and
\(R\in U(m)\) with \(LER=E_0\). Here
\(\Gr(d,M_{n,m}(\C))\) denotes the Grassmannian of complex
\(d\)-dimensional linear subspaces of \(M_{n,m}(\C)\).
\end{proposition}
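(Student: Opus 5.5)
The plan is to first use openness of the graph condition to bring the Graph Lemma~\ref{I:graph} into play, and then remove the two parameters $u,v$ by explicit unitary rotations. The projection $\pi$ restricted to $E_0$ is an isomorphism. Since invertibility of $\pi|_E$ is an open condition on $\Gr(d,M_{n,m}(\C))$ (in a chart around $E_0$, $E$ is the graph of a linear map $E_0\to E_0^\perp$, and $\pi|_E$ depends continuously on it), there is a neighborhood $\mathscr U_I$ of $E_0$ on which $\pi|_E$ remains an isomorphism. For $E\in\mathscr U_I$ with $r(E)\le st$, Lemma~\ref{I:graph} (valid because $t\ge2$, i.e.\ $(n,m)\ne(2,2)$) gives
\[
E=\widehat u\,M_{1,m}(\C)+M_{n,1}(\C)\,\widehat v,\qquad \widehat u=\binom1u,\ \widehat v=(1,v).
\]
The same description with $u=0$, $v=0$ is exactly $E_0=e_1M_{1,m}(\C)+M_{n,1}(\C)e_1^t$. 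Since $u$ and $v$ can be arbitrary here, no further smallness is actually needed; only the graph form matters.

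Next I choose $L\in U(n)$ with $L\widehat u=\|\widehat u\|e_1$; this is possible since $\widehat u\neq0$, for instance via a Householder reflection, or any unitary whose first row is $\widehat u^*/\|\widehat u\|$. Similarly I choose $R\in U(m)$ with $\widehat vR=\|\widehat v\|e_1^t$. Then
\[
LER=L\widehat u\,M_{1,m}(\C)R+LM_{n,1}(\C)\widehat vR
=e_1\,M_{1,m}(\C)+M_{n,1}(\C)\,e_1^t=E_0,
\]
because $M_{1,m}(\C)R=M_{1,m}(\C)$ and $LM_{n,1}(\C)=M_{n,1}(\C)$, while the nonzero scalars $\|\widehat u\|,\|\widehat v\|$ are absorbed into the full row and column spaces. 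Finally, the map $Z\mapsto LZR$ preserves $I-ZZ^*>0$, so it lies in $\Aut(D)$, as required later.

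I expect no real obstacle: all the substantive content sits in the Graph Lemma~\ref{I:graph}. The only points that need care are justifying that the neighborhood on which $\pi|_E$ is invertible is genuinely open in the Grassmannian topology, and checking that the line-form description~\eqref{I:lineform} is invariant under left and right multiplication in the way used above.
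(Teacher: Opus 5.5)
Your proposal is correct and follows essentially the same route as the paper. The paper also takes a neighborhood on which $\pi|_E$ stays invertible, applies the graph lemma (Lemma~\ref{I:graph}) to obtain $E=\widehat u\,M_{1,m}(\C)+M_{n,1}(\C)\,\widehat v$, and then picks unitaries with $L\widehat u=\|\widehat u\|e_1$ and $\widehat vR=\|\widehat v\|e_1^t$, which gives $LER=E_0$.
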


\begin{proof}
Invertibility of \(\pi|_E\) is an open condition on the Grassmannian. Choose \(\mathscr U_I\) so that this condition
holds for all its planes. Lemma~\ref{I:graph} implies
\[
 E=\widehat{u}M_{1,m}(\C)+M_{n,1}(\C)\widehat{v}
\]
for nonzero \(\widehat{u}\in M_{n,1}(\C)\) and \(\widehat{v}\in M_{1,m}(\C)\).
Choose \(L\in U(n)\), \(R\in U(m)\) with
\[
 L\widehat{u}=\|\widehat{u}\|e_1,\quad \widehat{v}R=\|\widehat{v}\|e_1^t.
\]
Since multiplication by invertible matrices permutes the full row and
column spaces, respectively, it follows that
\[
 LER=e_1M_{1,m}(\C)+M_{n,1}(\C)e_1^t=E_0.
\]
Also, the transformation \(Z\mapsto LZR\) is a target automorphism fixing
zero. This proves the asserted local
rigidity by actual unitary target automorphisms.
\end{proof}

\subsection{Type II: the local tangent rigidity}
\label{II:section}

Let $n\geq5$, let $V=\mathbb C^n$, and let $\mathcal M=\Lambda^2V$ be the
ambient space of skew-symmetric matrices, identified by their
upper-triangular entries. Set
\[
 d=2n-3,\quad d^\#=\dim \mathcal M-d=\binom{n-2}{2}.
\]
The
norm on $\mathcal M$ is $\|Z\|^2=\tfrac12\operatorname{tr}(ZZ^*)$ under the identification  
$Z\leftrightarrow\sum_{i<j}Z_{ij}\,e_i\wedge e_j\in\Lambda^2\mathbb C^n$.
For a two-plane $P\subset V$, the notation $P\w V$ means the linear
span of all $w\w v$ with $w\in P$, $v\in V$.

The quadratic identity \eqref{eq:quadratic-common}, with
$Q_2(Z)=\tfrac12Z\w Z$, puts every normalized tangent subspace in
\[
 \mathcal G=\left\{L\in\operatorname{Gr}(d,\mathcal M):
    \dim\operatorname{span}\{x\w y:x,y\in L\}\leq d^\#\right\}.
\]
The standard planes form the family
\[
 \mathcal O=\{P\w V:P\in\operatorname{Gr}(2,V)\} \subset \mathcal{G}.
\]
We will show that the locus $\mathcal G$ agrees with
this family in a neighborhood of every member of $\mathcal O$.

\begin{lemma}\label{II:linear-wedge}
Let $U$ be a complex vector space of dimension $m\geq3$. If a
complex linear map $T:U\to\Lambda^2U$ satisfies $u\w T(u)=0$
for every $u\in U$, then there is a unique $a\in U$ such that
$T(u)=u\w a$ for all $u$.
\end{lemma}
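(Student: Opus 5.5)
Uniqueness is immediate: if $u\w a=u\w a'$ for all $u$, then $u\w(a-a')=0$ for every $u$, so $a-a'$ is parallel to every $u\in U$. Since $\dim U\ge2$, this forces $a=a'$. For existence, the plan is to first polarize the hypothesis. Replacing $u$ by $u+v$ in $u\w T(u)=0$ gives the symmetric identity
\[
u\w T(v)+v\w T(u)=0\quad\text{for all }u,v\in U,
\]
an equation in $\Lambda^3U$.

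The key step is to show that $T(u)$ is divisible by $u$, i.e. $T(u)\in u\w U$, for each nonzero $u$. Complete $u=u_1$ to a basis $u_1,\dots,u_m$ and write $T(u_1)=u_1\w b+\omega$, where $\omega\in\Lambda^2\Span(u_2,\dots,u_m)$. Then $u_1\w T(u_1)=u_1\w\omega=0$, and since wedging with $u_1$ is injective on $\Lambda^2\Span(u_2,\dots,u_m)$, we get $\omega=0$. Hence $T(u)=u\w a(u)$, where $a(u)$ is determined modulo $\C u$. A priori $a(u)$ depends on $u$, possibly nonlinearly, and the remaining task is to make it constant.

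Next I would use the polarized identity on two independent vectors $u,v$:
\[
u\w v\w a(v)+v\w u\w a(u)=u\w v\w\bigl(a(v)-a(u)\bigr)=0 .
\]
So $a(u)-a(v)\in\Span(u,v)$ for all independent $u,v$. To pin this down I would use a third vector; this is where $m\ge3$ enters. Apply the same reasoning to $u,v,u+v$ with $u,v$ independent, using linearity $T(u+v)=T(u)+T(v)$:
\[
(u+v)\w a(u+v)=u\w a(u)+v\w a(v).
\]
Modifying the representatives $a(\cdot)$ appropriately, I would show there is a single vector $a$ with $T(u)=u\w a$ for all $u$ in a basis, and then extend by linearity. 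A cleaner route avoids choosing representatives. For independent $u,v,w$, wedge the relations pairwise. From $a(u)-a(v)\in\Span(u,v)$, $a(v)-a(w)\in\Span(v,w)$ and $a(u)-a(w)\in\Span(u,w)$, write $a(u)=a+\alpha u+\beta v$-type expressions and solve to get a common $a$ with $a(x)\equiv a \pmod{\C x}$ for $x=u,v,w$. Then $T(x)=x\w a$ on a basis containing $u,v,w$, and every further basis vector is handled by pairing it with two of these.

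The main obstacle is this gluing step: making $a(u)$ consistent across a basis modulo the ambiguity $\C u$. I expect the three-vector trick to settle it. Concretely, for a basis $u_1,\dots,u_m$, set $c_{ij}=a(u_i)-a(u_j)\in\Span(u_i,u_j)$ and write $c_{ij}=\lambda_{ij}u_i-\mu_{ij}u_j$. The cocycle relation $c_{ij}+c_{jk}=c_{ik}$, together with the linear independence of $u_i,u_j,u_k$, forces $\lambda_{ij}=\lambda_{ik}$ and $\mu_{ij}=\mu_{kj}$; this is where $m\ge3$ is needed, since a third index $k$ must exist. So $\lambda_{ij}=\lambda_i$ and $\mu_{ij}=\mu_j$. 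Comparing the two expressions for $c_{ij}=-c_{ji}$ gives $\lambda_i=\mu_i$. Hence $a:=a(u_i)-\lambda_iu_i$ is independent of $i$, and $T(u_i)=u_i\w a$ for every $i$. Linearity of $T$ and of $u\mapsto u\w a$ then gives $T(u)=u\w a$ for all $u\in U$.
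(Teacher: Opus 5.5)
Your proposal is correct and follows essentially the paper's proof: you write $T(f_i)=f_i\wedge a_i$ using injectivity of $f_i\wedge(\cdot)$ on the complementary $\Lambda^2$, polarize to get $a_i-a_j\in\Span(f_i,f_j)$, and glue by comparing coefficients, with your $\lambda_i$ equal to the paper's $a_{ii}-b_i$ (where $b_i$ is the common off-diagonal coefficient). The vague intermediate sketch (the $u,v,u+v$ trick) can be dropped, and the gluing does not actually require a third index since the lemma already holds for $m\ge 2$, but neither point affects correctness.
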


\begin{proof}
Choose a basis $f_1,\ldots,f_m$ of $U$. Since
$f_i\w T(f_i)=0$, the coefficients of $T(f_i)$ not involving
$f_i$ vanish. Thus $T(f_i)=f_i\w a_i$ for some $a_i\in U$.
Substituting $f_i+f_j$ into the hypothesis yields
\[
 f_i\w f_j\w(a_j-a_i)=0,
\]
and hence $a_j-a_i\in\operatorname{span}(f_i,f_j)$. Write $a_i=\sum_k a_{ik}f_k$. For fixed $k$, we have $a_{ik}=a_{jk}$ for any $i,j\neq k$. We then use $b_k$ to denote these common coefficients. Let $a=\sum_{k}b_kf_k$. Then $a_i-a\in\mathbb C f_i$, so
$T(f_i)=f_i\w a$ for every $i$. Linearity proves the formula.
Finally, $u\w(a-a')=0$ for every $u\in U$ forces $a=a'$, which
proves uniqueness.
\end{proof}

For $L_0=P\w V\in \mathcal O$, choose an orthogonal decomposition $V=P\oplus U$, with
$P=\operatorname{span}(e_1,e_2)$ and $m=\dim U=n-2\geq3$.
Then
\[
 \mathcal M=L_0\oplus\Lambda^2U,
 \quad L_0=\Lambda^2P\oplus(P\w U),
 \quad  \dim L_0=1+2m=d,
 \quad \dim \Lambda^2 U=d^\#.
\]

For $B\in\operatorname{Hom}(L_0,\Lambda^2U)$, define a linear map
\[
 Q_B:\operatorname{Sym}^2L_0\rightarrow\Lambda^4V
\]
by the symmetric bilinear rule
\[
 Q_B(x\odot y)=\tfrac12(x+B(x))\w(y+B(y)),
 \quad x\odot y:=\tfrac12(x\otimes y+y\otimes x).
\]
The rule is symmetric because $x+B(x)$ and $y+B(y)$ are bivectors.
Its diagonal polynomial is
\begin{equation}\label{equ:Qb=Q2}
    \widehat Q_B(x):=Q_B(x\odot x)
       =Q_2(x+B(x)).
\end{equation}

The following lemma is immediate, which locally yields a determinantal variety interpretation of $\mathcal G$.
\begin{lemma}\label{lem: reduced to rank QB}
   Write $L_B=\{x+B(x):x\in L_0\}$. Then $L_B\in \mathcal G$ if and only if $\operatorname{rank}Q_B\leq d^\#$. For $B=0$, $\operatorname{rank}Q_0=d^\#$.
\end{lemma}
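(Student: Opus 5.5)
The lemma has two parts, and I will prove each by linear algebra.

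\emph{First part.} The map $x\mapsto x+B(x)$ is a linear isomorphism from $L_0$ onto $L_B$. So $\Span\{X\w Y: X,Y\in L_B\}$ equals $\Span\{(x+B(x))\w(y+B(y)): x,y\in L_0\}$. This is exactly twice the image of $Q_B$, because $\operatorname{Sym}^2L_0$ is spanned by the elements $x\odot y$, and $Q_B(x\odot y)=\tfrac12(x+B(x))\w(y+B(y))$. Hence $\dim\Span\{X\w Y:X,Y\in L_B\}=\operatorname{rank}Q_B$. Since $\dim L_B=\dim L_0=d$, the plane $L_B$ lies in $\operatorname{Gr}(d,\mathcal M)$. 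By the definition of $\mathcal G$, we get $L_B\in\mathcal G$ if and only if $\operatorname{rank}Q_B\le d^\#$. The choice of the factor $\tfrac12$ and the symmetrization does not affect the span. By \eqref{equ:Qb=Q2}, this span is also the span of the values $Q_2(L_B)$.

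\emph{Second part.} Take $B=0$. The image of $Q_0$ is the span of all $x\w y$ with $x,y\in L_0=P\w V$. Decompose $L_0=\Lambda^2P\oplus(P\w U)$ with $P=\Span(e_1,e_2)$.

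First I find which products vanish. Any product containing a factor $e_1\w e_2$ is either zero, or of the form $e_1\w e_2\w e_i\w u$ when the other factor involves $U$. Products of two elements of $\Lambda^2P$ vanish. For $p,p'\in P$ and $u,u'\in U$, the product $(p\w u)\w(p'\w u')$ is nonzero only if $p,p'$ are independent.

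Next I compute the span. Write $L_0\ni x=e_1\w u_1+e_2\w u_2+s\,e_1\w e_2$. Every element of $\Lambda^4V$ arising as $x\w y$ lies in $e_1\w e_2\w\Lambda^2V$. Moreover, $(e_1\w u)\w(e_2\w u')=-e_1\w e_2\w u\w u'$. The products $e_1\w e_2\w e_i\w u$ with $i\in\{1,2\}$ vanish. So the image of $Q_0$ is exactly $e_1\w e_2\w\Lambda^2U$.

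Finally, the map $\Lambda^2U\to\Lambda^4V$ given by $\omega\mapsto e_1\w e_2\w\omega$ is injective, because $V=P\oplus U$. Therefore $\operatorname{rank}Q_0=\dim\Lambda^2U=\binom{n-2}{2}=d^\#$.

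Everything here is a direct verification. The only point that needs care is the identification of the image in the second part, namely checking that no product lands outside $e_1\w e_2\w\Lambda^2U$ and that the map into $\Lambda^4V$ is injective. Both follow from the direct-sum decomposition $V=P\oplus U$.
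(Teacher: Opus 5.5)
Your proof is correct and follows the paper's argument. The first part uses the fact that $\operatorname{Sym}^2L_0$ is spanned by products, so that $\operatorname{im}Q_B$ is the span of the wedge products in $L_B$. The second part identifies $\operatorname{im}Q_0=e_1\w e_2\w\Lambda^2U$, of dimension $\binom{n-2}{2}$. The only cosmetic difference is that you work directly with the bilinear products $x\odot y$, matching the definition of $\mathcal G$, whereas the paper uses the squares $x\odot x$ and the diagonal polynomial $\widehat Q_B(x)=Q_2(x+B(x))$.
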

\begin{proof}
Since squares $x\odot x$ span $\operatorname{Sym}^2L_0$, the first statement follows from \eqref{equ:Qb=Q2}.
Write an arbitrary $x\in L_0$ as
\[
 x=s e_1\w e_2+e_1\w u+e_2\w v,
 \quad s\in\mathbb C,\quad u,v\in U.
\]
At $B=0$,
\[
 \widehat Q_0(x)=-e_1\w e_2\w u\w v,
 \quad \operatorname{im}Q_0
      =S:=e_1\w e_2\w\Lambda^2U.
\]
So $\operatorname{rank}Q_0=\dim S=d^\#$.
\end{proof}

The $dd^\#$-dimensional Grassmannian chart centered at $L_0$ consists of graphs
$L_B=\{x+B(x):x\in L_0\}$, where
$B\in\operatorname{Hom}(L_0,\Lambda^2U)$.
In this chart, from Lemma \ref{lem: reduced to rank QB}, $\mathcal G$ is the algebraic set defined by the
$(d^\#+1)$-minors of the matrix of $Q_B$. For a tangent direction
$H\in\operatorname{Hom}(L_0,\Lambda^2U)$, let
\[
 \dot Q_H=\left.\frac{\mathrm d}{\mathrm d t}\right|_{t=0}Q_{tH}.
\]
Its diagonal polynomial is
$\dot Q_H(x\odot x)=x\w H(x)$.
We first compute $T^{Zar}_{L_0} \mathcal G$: the Zariski tangent space of $\mathcal G$ at $L_0$.

\begin{lemma}\label{lem:Zariski_tangnet_Gat0}
   $ T^{Zar}_{L_0} \mathcal{G}=\{H: \dot Q_H(\ker Q_0)\subset S\}$ and it has dimension $2m$.
\end{lemma}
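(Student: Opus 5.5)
The plan is to use the standard description of the Zariski tangent space of a determinantal locus at a point of maximal rank, and then to pin down the solution space by testing against enough explicit kernel elements, with the $\mathcal O$-orbit supplying the reverse inclusion.

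\emph{Step 1: the tangent-space criterion.} By Lemma~\ref{lem: reduced to rank QB}, in the chart $B\mapsto L_B$ the set $\mathcal G$ is cut out by the $(d^\#+1)$-minors of $Q_B$. Moreover $Q_0$ has rank exactly $d^\#$, with image $S$. For a family $M(t)$ with $\operatorname{rank}M(0)=r$, a first-order variation $\dot M$ kills the derivatives of all $(r+1)$-minors if and only if $\dot M(\ker M(0))\subset \operatorname{im}M(0)$. To see this, choose bases adapted to $\ker M(0)$ and $\operatorname{im}M(0)$. Then the linear part of each $(r+1)$-minor is, up to a nonzero factor, an entry of the block $\ker M(0)\to \operatorname{coker}M(0)$ of $\dot M$. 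Applying this to $M(t)=Q_{tH}$ gives the first claim, $T^{Zar}_{L_0}\mathcal G=\{H:\dot Q_H(\ker Q_0)\subset S\}$. Since $\dim\mathcal G\ge\dim\mathcal O=\dim\operatorname{Gr}(2,V)=2m$, only the upper bound on dimensions needs real work.

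\emph{Step 2: the upper bound.} Write $H(e_1\w e_2)=c$, $H(e_1\w u)=\alpha(u)$ and $H(e_2\w v)=\beta(v)$, with $c\in\Lambda^2U$ and $\alpha,\beta\in\operatorname{Hom}(U,\Lambda^2U)$. Decompose $\Lambda^4V=S\oplus(P\otimes\Lambda^3U)\oplus\Lambda^4U$ by $P$-degree. Every $\dot Q_H(x\odot y)=\tfrac12(x\w H(y)+y\w H(x))$ has no $\Lambda^4U$ part, so the condition says exactly that the $P\otimes\Lambda^3U$ component vanishes on $\ker Q_0$. We test this on specific kernel elements.
\begin{itemize}
\item $(e_1\w u)\odot(e_1\w u)\in\ker Q_0$, since $e_1\w e_1=0$. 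It gives $e_1\w u\w\alpha(u)=0$, so $u\w\alpha(u)=0$ for all $u\in U$. As $m\ge3$, Lemma~\ref{II:linear-wedge} yields $\alpha(u)=u\w a$ for a unique $a\in U$.
\item Symmetrically, $(e_2\w v)\odot(e_2\w v)$ gives $\beta(v)=v\w b$ for a unique $b\in U$.
\item $(e_1\w e_2)\odot(e_1\w u)\in\ker Q_0$. Its $P\otimes\Lambda^3U$ component is $e_1\w u\w c$, so $u\w c=0$ for all $u\in U$. Since $m\ge3$, this forces $c=0$.
\end{itemize}
Hence $T^{Zar}_{L_0}\mathcal G$ is contained in the $2m$-dimensional space $\{H_{a,b}\}$, where $H_{a,b}$ has $c=0$, $\alpha=\cdot\w a$ and $\beta=\cdot\w b$.

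\emph{Step 3: the reverse inclusion.} Rather than checking all of $\ker Q_0$, take the curves $P_t=\operatorname{span}(e_1+ta,\,e_2+tb)$ for $a,b\in U$. Then $P_t\w V\in\mathcal O\subset\mathcal G$, and writing $P_t\w V$ as a graph over $L_0$ to first order in $t$ gives exactly $H_{a,b}$. For instance,
\[
(e_1+ta)\w(e_2+tb)=e_1\w e_2+t(e_1\w b+a\w e_2)+t^2a\w b ,
\]
so the $\Lambda^2U$ component of $e_1\w e_2$ is unchanged to first order (consistent with $c=0$). Likewise $(e_1+ta)\w u=e_1\w u+t\,a\w u$, whose $\Lambda^2U$ part is $t\,(u\w(-a))$; this matches $\alpha$ after renaming $a\mapsto -a$.

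Tangent vectors to curves in $\mathcal G$ lie in the Zariski tangent space, so $\{H_{a,b}\}\subset T^{Zar}_{L_0}\mathcal G$. Combined with Step 2, $\dim T^{Zar}_{L_0}\mathcal G=2m$.

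The main obstacle is Step 1, namely justifying the determinantal tangent criterion in this chart. I would handle it by the adapted-basis block computation described there. The choice of test kernel elements in Step 2 is the other delicate point, and it is where the hypothesis $m\ge3$ enters.
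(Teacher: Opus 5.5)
Your argument is correct, and Steps~1--2 are essentially the paper's. The paper uses the same adapted-basis computation for the determinantal criterion. For the upper bound it rewrites the condition as $\pi\dot Q_H=C\circ Q_0$, restricts the diagonal polynomial $x\wedge H(x)$ to $v=0$ (where $\widehat Q_0$ vanishes), and reads off the coefficients of $s^1$ and $s^0$. This is the same as testing on your kernel elements $(e_1\wedge e_2)\odot(e_1\wedge u)$ and $(e_1\wedge u)\odot(e_1\wedge u)$ and then invoking Lemma~\ref{II:linear-wedge}.

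The genuine difference is the reverse inclusion. The paper checks it algebraically. For $H(x)=u\wedge a+v\wedge b$ one gets $\pi(x\wedge H(x))=\pi(e_1\wedge u\wedge v\wedge b-e_2\wedge u\wedge v\wedge a)$. This is linear in $u\wedge v$, so it factors through $\widehat Q_0(x)=-e_1\wedge e_2\wedge u\wedge v$. Since squares span $\operatorname{Sym}^2L_0$, this gives $\dot Q_H(\ker Q_0)\subset S$.

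You instead exhibit each $H_{a,b}$ as the velocity of $t\mapsto P_t\wedge V$ inside $\mathcal O\subset\mathcal G$. Every defining minor vanishes identically along this curve, so its velocity lies in $T^{Zar}_{L_0}\mathcal G$. The paper's version keeps the lemma purely algebraic and independent of $\mathcal O$. Yours needs the chart computation of $P_t\wedge V$. In exchange, it already shows that the differential of $\Phi$ maps onto the $2m$-dimensional space $\{H_{a,b}\}$. That is exactly what the paper computes separately in the proof of Proposition~\ref{II:local-rigidity} to conclude $T_{L_0}\mathcal O=T^{Zar}_{L_0}\mathcal G$.

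One precision in that computation: the $L_0$-component of $(e_1+ta)\wedge(e_2+tb)$ is $e_1\wedge e_2+t(e_1\wedge b-e_2\wedge a)$, not $e_1\wedge e_2$. So you must subtract the graph map applied to the correction term. Doing so gives $B_t(e_1\wedge e_2)=-t^2\,a\wedge b$, which is indeed $O(t^2)$, as you need.

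Your side remark $\dim\mathcal G\ge\dim\mathcal O=2m$ would require injectivity of $\Phi$ (Lemma~\ref{lem:PvwV}). Step~3 makes it unnecessary, so nothing depends on it.
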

\begin{proof}
Choose bases in the source and target in which
\[
 Q_0=\begin{pmatrix}J_0&0\\0&0\end{pmatrix},
 \quad \det J_0\ne0,\quad J_0\in M_{d^\#}(\mathbb C).
\]
 A minor not containing the full invertible block $J_0$ has zero first derivative, while a minor containing $J_0$ plus one extra row and column has differential equal to $\det J_0$ times the corresponding entry of $\dot Q_H$. Hence all these differentials vanish iff the lower-right block of $\dot Q_H$ vanishes, i.e.
\[
\dot Q_H(\ker Q_0)\subset S=\operatorname{im}Q_0=e_1\w e_2 \w\Lambda^2U.
\]
This exactly yields
\begin{equation}\label{II:rank-tangent}
T^{Zar}_{L_0} \mathcal{G}=\{H: \dot Q_H(\ker Q_0)\subset S\}.
\end{equation}

Let $\pi:\Lambda^4V\to\Lambda^4V/S$ be the quotient map.
Because $Q_0$ maps onto $S$, condition \eqref{II:rank-tangent}
means that $\pi\dot Q_H$ factors through $Q_0$. Equivalently,
there is a linear map $C:S\to\Lambda^4V/S$ such that, on diagonal
polynomials,
\begin{equation}\label{II:factorization}
 \pi(x\w H(x))=C(\widehat Q_0(x))\quad(x\in L_0).
\end{equation}
Write $x\in L_0$ as
\[
 x=s e_1\w e_2+e_1\w u+e_2\w v,
 \quad s\in\mathbb C,\quad u,v\in U,
\]
and then
\[
 H(x)=sH_0+T(u)+R(v),
\]
for some $H_0\in\Lambda^2U,$ and $T,R\in\operatorname{Hom}(U,\Lambda^2U)$.
Since $s e_1\w e_2\w H(x)\in S$, the left side of
\eqref{II:factorization} equals
\[
 \pi(e_1\w u\w H(x)+e_2\w v\w H(x)).
\]
Set $v=0$. Then $\widehat Q_0(x)=-e_1\w e_2\w u\w v=0$,
and the coefficient of
$s$ on the left side of \eqref{II:factorization} yields \[\pi(e_1\w u\w H_0)=0\] for every $u\in U$.
Note that $e_1\w\Lambda^3U$ has zero intersection with $S$,
and wedging by $e_1$ is injective on $\Lambda^3U$, so $u \w H_0=0$ for every $u\in U$. Since $m\geq3$, it implies $H_0=0$. The remaining part with $v=0$
yields $\pi(e_1\w u \w T(u))=0$, and similar argument implies $u\w T(u)=0$. Similarly, setting $u=0$ yields
$v\w R(v)=0$. By Lemma~\ref{II:linear-wedge}, there exist unique $a,b\in U$ such that
\begin{equation}\label{II:orbit-variation}
 H(x)=u\w a+v\w b.
\end{equation}
Conversely, such an $H$ satisfies \eqref{II:factorization}, since
\[
 \pi(x\w H(x))
 =\pi(e_1\w u\w v\w b-e_2\w u\w v\w a),
\]
which depends linearly on $u\w v$ and therefore factors through
$\widehat Q_0(x)$. Therefore, $T^{Zar}_{L_0}\mathcal G$ has dimension $2m$.
\end{proof}

\begin{lemma}\label{lem:PvwV}
Let $V$ be a complex vector space of dimension $\geq 4$. For any 2-dimensional subspace $P\subset V$,  \[P=\{v\in V:v\w V\subset P\w V\}.\]
\end{lemma}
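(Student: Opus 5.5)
The plan is to prove the two inclusions separately, working in a basis adapted to $P$.

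The inclusion $P\subseteq\{v\in V: v\w V\subset P\w V\}$ holds by the definition of $P\w V$ as the span of all $w\w v$ with $w\in P$.

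For the reverse inclusion, fix a complement $V=P\oplus U$ with $P=\Span(e_1,e_2)$ and $U=\Span(e_3,\ldots,e_n)$, where $n=\dim V\ge4$. Then
\[
\Lambda^2V=\Lambda^2P\oplus(P\w U)\oplus\Lambda^2U,
\]
and $P\w V=\Lambda^2P\oplus(P\w U)$ is exactly the kernel of the projection $\rho:\Lambda^2V\to\Lambda^2U$. Take $v\in V$ with $v\w V\subset P\w V$ and write $v=p+u$ with $p\in P$ and $u\in U$. For every $w\in U$ the term $p\w w$ lies in $P\w V$, so
\[
\rho(v\w w)=u\w w.
\]
The hypothesis therefore says $u\w w=0$ in $\Lambda^2U$ for all $w\in U$.

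This is the only step with content. Since $\dim U=n-2\ge2$, if $u\ne0$ we can choose $w\in U$ linearly independent of $u$, and then $u\w w\ne0$, a contradiction. Hence $u=0$ and $v=p\in P$.

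The argument only needs $\dim U\ge2$, which is where the hypothesis $\dim V\ge4$ enters. For $\dim V=3$ the lemma fails, since then $P\w V=\Lambda^2V$ and every $v$ satisfies the condition.

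Consequence for the paper: the lemma shows that the map $P\mapsto P\w V$ is injective, because $P$ is recovered from $P\w V$. So $\mathcal O$ is an injective image of $\Gr(2,V)$ and has dimension $2(n-2)=2m$. This matches the dimension of $T^{Zar}_{L_0}\mathcal G$ from Lemma \ref{lem:Zariski_tangnet_Gat0}, which is what will give smoothness of $\mathcal G$ and $\mathcal G=\mathcal O$ near $L_0$.
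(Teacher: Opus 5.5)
Your proof is correct and is essentially the paper's argument. Your projection $\rho:\Lambda^2V\to\Lambda^2U$, whose kernel is $P\w V$, is the paper's map $\Lambda^2V\to\Lambda^2(V/P)$ once you identify $U\cong V/P$; in both, $\dim(V/P)\ge 2$ gives a $w$ with $v\w w\notin P\w V$ whenever $v\notin P$, and the paper just phrases this without choosing a complement.
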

\begin{proof}
    The inclusion from left to right is immediate. If
$v\notin P$, choose $w$ whose class in $V/P$ is linearly
independent of that of $v$; this is possible since
$\dim(V/P)=m\geq2$. Then $v\w w$ has nonzero image in
$\Lambda^2(V/P)$, whereas every element of $P\w V$ has zero
image there.
\end{proof}

\begin{proposition}[Type II local rigidity]\label{II:local-rigidity}
For every $L_0=P\w V\in\mathcal O$, there is a neighborhood
$\mathcal U$ of $L_0$ in $\operatorname{Gr}(d,\mathcal M)$ such that
\[
 \mathcal G\cap\mathcal U=\mathcal O\cap\mathcal U.
\]
\end{proposition}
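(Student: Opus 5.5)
Since $\mathcal O\subset\mathcal G$ is already known, the content of Proposition~\ref{II:local-rigidity} is the reverse inclusion near $L_0$. I would argue by dimension and smoothness: $\mathcal O$ is a smooth subvariety of $\operatorname{Gr}(d,\mathcal M)$ of dimension $2m$. The Zariski tangent space of $\mathcal G$ at $L_0$ also has dimension $2m$ by Lemma~\ref{lem:Zariski_tangnet_Gat0}. The plan is to conclude that the germ of $\mathcal G$ at $L_0$ coincides with the germ of $\mathcal O$, and then propagate this by homogeneity to every member of $\mathcal O$.

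\emph{Step 1: $\mathcal O$ is smooth of dimension $2m$.} Consider the map $\operatorname{Gr}(2,V)\to\operatorname{Gr}(d,\mathcal M)$, $P\mapsto P\w V$. It is algebraic and $GL(V)$-equivariant, and it is injective by Lemma~\ref{lem:PvwV}, which recovers $P$ from $P\w V$ since $\dim V=n\ge5$. For immersivity, take the chart $P_A=\operatorname{span}(e_1+Ae_1,e_2+Ae_2)$ with $A\in\operatorname{Hom}(P,U)$, $Ae_1=a$, $Ae_2=b$. The first-order variation of $P\w V$ in the graph chart $\operatorname{Hom}(L_0,\Lambda^2U)$ is the map $x=se_1\w e_2+e_1\w u+e_2\w v\mapsto u\w a+v\w b$, modulo $L_0$. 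This is exactly the form \eqref{II:orbit-variation}, and it is nonzero whenever $(a,b)\neq0$ because $m\ge3$. So the differential is injective with image equal to $T^{Zar}_{L_0}\mathcal G$. Hence $\mathcal O$ is a closed orbit, a smooth compact submanifold of dimension $2\dim U=2m$, with $T_{L_0}\mathcal O=T^{Zar}_{L_0}\mathcal G$.

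\emph{Step 2: local equality at $L_0$.} $\mathcal G$ is a complex analytic (indeed algebraic) set, being cut out in the chart by the $(d^\#+1)$-minors of $Q_B$ as in Lemma~\ref{lem: reduced to rank QB}. It contains the smooth $2m$-dimensional germ $(\mathcal O,L_0)$. Its Zariski tangent space at $L_0$ also has dimension $2m$, so the embedding dimension of the germ $(\mathcal G,L_0)$ is $2m$. Therefore $(\mathcal G,L_0)$ embeds in a smooth $2m$-dimensional germ $(N,L_0)$; concretely, $(N,L_0)$ can be cut out by $dd^\#-2m$ components of the defining ideal of $(\mathcal G,L_0)$ whose differentials at $L_0$ are independent. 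Now $(\mathcal O,L_0)\subset(\mathcal G,L_0)\subset(N,L_0)$, and both $\mathcal O$ and $N$ are irreducible smooth germs of the same dimension. Hence all three germs coincide, which gives a neighborhood $\mathcal U$ of $L_0$ with $\mathcal G\cap\mathcal U=\mathcal O\cap\mathcal U$. The step needing most care is the claim that the Zariski tangent space bounds the embedding dimension of the reduced germ. Lemma~\ref{lem:Zariski_tangnet_Gat0} computes it using the minors themselves, which generate an ideal possibly smaller than the radical. That is harmless: the radical has a smaller or equal Zariski tangent space, so the embedding dimension is still at most $2m$, and the containment of $\mathcal O$ forces equality.

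\emph{Step 3: other base points.} $GL(V)$ (indeed $U(n)$) acts on $\Lambda^2V$ preserving $\mathcal G$, since $g(x)\w g(y)=(\Lambda^4g)(x\w y)$ with $\Lambda^4g$ invertible, so the span dimension is unchanged. The same action preserves $\mathcal O$ and acts transitively on it. Translating the neighborhood found at $L_0=\operatorname{span}(e_1,e_2)\w V$ by a unitary $g$ with $gP_0=P$ yields the statement for every $P\w V\in\mathcal O$. I expect Step 2's dimension comparison, together with matching $T_{L_0}\mathcal O$ with the $2m$-dimensional space in Lemma~\ref{lem:Zariski_tangnet_Gat0}, to be the only point needing care; the rest is formal.
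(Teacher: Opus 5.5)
Your proposal is correct and takes essentially the paper's route. It uses injectivity of $P\mapsto P\w V$ via Lemma~\ref{lem:PvwV}, and a first-order computation showing the differential is injective with image equal to the $2m$-dimensional space $T^{Zar}_{L_0}\mathcal G$ of Lemma~\ref{lem:Zariski_tangnet_Gat0}. It then cuts out a smooth $2m$-dimensional germ with $dd^\#-2m$ rank-defining minors whose differentials are independent, squeezing $\mathcal O\subset\mathcal G\subset N$ into equality, where your $N$ is the paper's $Z$.

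The differences are cosmetic. With your chart $e_j+Ae_j$ the variation is actually $a\w u+b\w v=-(u\w a+v\w b)$, a harmless sign, and your explicit homogeneity step replaces the paper's choice of coordinates $P=\Span(e_1,e_2)$.
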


\begin{proof}
The map
\[
 \Phi:\operatorname{Gr}(2,V)\rightarrow\operatorname{Gr}(d,\mathcal M),
 \quad \Phi(P)=P\w V,
\]
is holomorphic and its image is $\mathcal O$. It is injective from Lemma \ref{lem:PvwV}.
For $a,b\in U$, let $P=\operatorname{span}(e_1,e_2)\in \operatorname{Gr}(2,V)$.
For $P_t=\operatorname{span}(e_1-ta,e_2-tb)$, the corresponding tangent vector $\varphi\in \operatorname{Hom}(P,V/P)$ is
\[
\varphi(e_1)=-\bar a,\quad \varphi(e_2)=-\bar b,
\]
where $\bar a,\bar b$ denote the images in $V/P$.
For $\Phi(P)=P\wedge V$, the differential at $P$ yields
$
d\Phi_{P}(\varphi)\in \operatorname{Hom}(P\wedge V,\mathcal M/(P\wedge V))
$, and 
\[
d\Phi_{P}(\varphi)(p\wedge v)=\varphi(p)\wedge v \bmod P\wedge V.
\]
Evaluating on
\[
x=s\,e_1\wedge e_2+e_1\wedge u+e_2\wedge v,
\]
we have
\[
d\Phi_{P}(\varphi)(x)
=
-a\wedge u-b\wedge v
=
u\wedge a+v\wedge b
\bmod P\wedge V.
\]
Suppose $d\Phi_{P}(\varphi)=0$, setting $v=0$ yields $u\w a \in P\w V$ for every $u\in V$. From Lemma \ref{lem:PvwV}, we have $a\in P$. Similarly, setting $u=0$ yields $b\in P$.
Thus $\varphi=0$ and this proves that $\mathrm{d}\Phi_{P}$ is injective and that its image is exactly
\eqref{II:orbit-variation}. The Grassmannian is compact, so the
injective holomorphic immersion $\Phi$ is an embedding. Thus
$\mathcal O$ is a smooth complex submanifold of dimension $2m$.  
Moreover, at $L_0=P\w V$, we have \[T_{L_0}\mathcal O=T^{Zar}_{L_0} \mathcal{G}.\]

As $T^{Zar}_{L_0}\mathcal G$ has dimension $2m$, we may therefore select $dd^\#-2m$ of the rank-defining minors whose differentials are
linearly independent at $B=0$. By the implicit function theorem their common
zero set is, locally, a smooth complex manifold $Z$ of dimension $2m$. This auxiliary $Z$
contains $\mathcal G$, which contains $\mathcal O$. The inclusion $\mathcal O\to Z$ has invertible differential at $L_0$,
so the inverse function theorem identifies $\mathcal O$ with a neighborhood of $L_0$ in $Z$. After
shrinking the Grassmannian neighborhood, $Z=\mathcal O$ there. The inclusions $\mathcal O\subset \mathcal G\subset Z$
now yield the required local equality.
\end{proof}
\begin{remark}
    The condition $n \geq 5$ is essential. For $D_4^{II}$, $ \mathcal G=\{L\in\operatorname{Gr}(5,\Lambda^2\mathbb C^4):
    \dim\operatorname{span}\{x\w y:x,y\in L\}\leq 1\}\cong \mathbb P^5$, as the inequality is automatically satisfied. While $\mathcal O$ is 4-dimensional.

\end{remark}
\subsection{Type III: eliminating the remaining tangent-plane deformations}
\label{III:sec:local-rigidity}

Throughout this subsection, $n\geq3$, $d=n$, and $m=n-1\geq2$. We use the
ordinary compound matrices $C_k(X)=\bigwedge^kX \in \operatorname{Hom}(\Lambda^k \mathbb{C}^n, \Lambda^k \mathbb{C}^n)$ and their full
Hilbert--Schmidt norms. In this case, $Q_k=C_k$. The quadratic and cubic
identities therefore apply directly.

The proof strategy for Proposition \ref{prop:exact-tangent} in this section is as follows. For each plane $E$, writing the vector of minors in a quadratic monomial basis yields a coefficient matrix. Note that $\dim\Span C_2(E)$ is exactly the rank of the coefficient matrix, which is a lower semicontinuous function with respect to the plane.
Therefore, if $E$ is a plane in a sufficiently small neighborhood of the standard plane $E_0$,
$\dim\Span C_2(E)\geq \dim\Span C_2(E_0)=d^\#$. Then the quadratic identity in Lemma \ref{lem:quadratic-common} forces $\dim\Span C_2(E)= \dim\Span C_2(E_0)$. In Section \ref{subsec:normalformremaining}, we prove 
Lemma \ref{III:eq:hankel} and it puts every nearby tangent subspace in the form $\cH_b$ (cf. \eqref{III:eq:hankel}), parametrized by a small commutative associative
multiplication $b$. It does not yet force $b=0$, and the cubic identity is needed. In Section \ref{subsec:deformcalcu}, we do some calculation and obtain the expansion of the diagonal defect of \eqref{eq:cubic-common}(cf. Proposition \ref{III:prop:exact-Q2}) in the deformation. In Section \ref{subsec:fourthgap}, we prove that the vanishing of the fourth order term in the expansion will force $b=0$. Then we finish the proof in Section \ref{sec:type-III-local-rigidity-done}.

\subsubsection{The normal form with remaining deformation}
\label{subsec:normalformremaining}
Write \(n=m+1\), and let
\begin{equation}
E_0=\left\{
\begin{pmatrix}
u& \frac{x^t}{\sqrt2}\\
\frac{x}{\sqrt2}&0
\end{pmatrix}:u\in\C,\ x\in\C^m
\right\}.
\label{III:eq:E0}
\end{equation}
Here $\Gr(d,\Sym_n\C)$ denotes the Grassmannian of $d$-dimensional
complex subspaces of the symmetric-matrix space. We use its usual
topology, equivalently convergence of orthogonal projections. In this
subsection the distinguished first row and column are indexed by $0$;
the lower rows and columns have indices $1,\ldots,m$. Thus $E_{00}$ is
the matrix with a single $1$ in the distinguished position. The
factor $1/\sqrt2$ in the displayed parametrization makes its source
coordinates orthonormal.

We define the notion of linear reduction here for the commutative algebra argument in Lemma \ref{III:lem:hankel}.
\begin{definition}
\label{def:linearreduction}
Let $D(z_0,z)$ be an $m\times m$ symmetric matrix-valued linear function. Set $\mathcal D_{ij}(x)=D_{ij}(1,x)$.
On polynomials of degree at most two, the linear reduction $\mathrm{NF}$ is defined by
\[
\mathrm{NF}(x_i x_j)=\mathcal D_{ij}(x),
\quad
\mathrm{NF}(1)=1,
\quad
\mathrm{NF}(x_k)=x_k,
\]
extended linearly.
\end{definition}

The following lemma shows that the plane $E$ near $E_0$ for which $\dim\operatorname{im}Q_2(E)=d^\#$ has a possible remaining deformation from the standard plane.
\begin{lemma}
\label{III:lem:hankel}
There is a neighborhood \(\mathscr U\) of \(E_0\) in
\(\Gr(d,\Sym_n\C)\) with the following property.
If \(E\in\mathscr U\) and
\begin{equation}\label{equ:IIIsharp plane}
    \dim\Span C_2(E)=\binom n2=d^\#,
\end{equation}

then:
\begin{enumerate}
\item the scheme
\[
Z_E:=\{[X]\in\mathbb P(E):\operatorname{rank}X\le1\}
\]
has length \(d\) and spans $\mathbb P(E)$ scheme-theoretically;
\item after choosing a support point of \(Z_E\), a unitary congruence,
and a basis \(1,e_1,\ldots,e_m\), the plane is of the form
\begin{equation}
\cH_b=
\left\{
J_b(u,\xi)=
\begin{pmatrix}
u&\xi^t\\
\xi&B_\xi
\end{pmatrix}:u\in\C,\ \xi\in(\C^m)^*
\right\},
\label{III:eq:hankel}
\end{equation}
where
\[
(B_\xi)_{ij}=\xi(b(e_i,e_j))
\]
for a commutative associative multiplication
\[
b:\Sym^2\C^m\rightarrow\C^m.
\]
As \(E\to E_0\), one can make these choices so that \(b\to0\).
\end{enumerate}
The scheme $Z_E$ is defined by all restricted $2\times2$ minors.
Its length is the dimension of its coordinate algebra after choosing
an affine chart containing it. To say that it spans $\mathbb P(E)$ scheme-theoretically
means that no nonzero linear form vanishes on the scheme; its set of
support points alone need not span.
\end{lemma}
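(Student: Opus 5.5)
Near $E_0$ the map $\pi:E\to\C\oplus\C^m$, which reads off the distinguished row and column $(X_{00},X_{0\,\cdot})$, is an isomorphism. So every $E\in\mathscr U$ is a graph
\[
E=\left\{\begin{pmatrix}u&\xi^t\\ \xi&D(u,\xi)\end{pmatrix}\right\}
\]
with $D$ a small symmetric-matrix-valued linear function. I would organize the proof around the restricted $2\times2$ minors in the coordinates $(u,\xi)$.

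\textbf{Step 1: the basis of corner minors.} For $1\le i\le j\le m$ the minors through row and column $0$ are
\[
P_{ij}=uD_{ij}-\xi_i\xi_j.
\]
Their $\xi\xi$-parts are linearly independent, which gives $\binom{m+1}2$ polynomials. The mixed minors on rows $0,i$ and columns $j,k$ are
\[
\xi_jD_{ik}-\xi_kD_{ij}.
\]
For $E_0$ the full span has dimension $\binom n2=\binom{m+1}2+\binom m2$, so for $E$ near $E_0$ the rank is at least this by semicontinuity. Hypothesis~\eqref{equ:IIIsharp plane} then forces the rank to equal $d^\#$ exactly.

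\textbf{Step 2: relations forced by rank exactly $d^\#$.} Equality of rank should translate into relations, because the remaining minors, namely the lower $2\times 2$ minors $D_{ij}D_{kl}-D_{il}D_{kj}$, must lie in the span of the others. I would dehomogenize at the support point $[E_{00}]$, setting $u=1$. This is justified because near $E_0$ the scheme $Z_E$ should have a point near $[E_{00}]$, by upper semicontinuity of the fibre of the rank-one locus. After a unitary congruence I move this point exactly to $E_{00}$, which kills the $u$-part of $D$ up to re-graphing.

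In the chart $u=1$ the corner minors give $\xi_i\xi_j=\mathcal D_{ij}(\xi)$ on $Z_E$. So the coordinate ring of the affine piece is spanned by $1,\xi_1,\dots,\xi_m$, which is exactly the linear reduction $\mathrm{NF}$ of Definition~\ref{def:linearreduction}. The count of $d^\#$ independent minors should say that no further relations among $1,\xi_1,\ldots,\xi_m$ are imposed. This gives length $d$ and scheme-theoretic spanning, which is (1). Concretely, the dimension of the span of the minors equals the number of quadrics in the ideal of a length-$d$ scheme spanning $\mathbb P^{d-1}$, namely $\binom{d+1}2-d=\binom n2$. Step 2 would prove this in the converse direction: the rank condition forces the ideal to be the full ideal of such a scheme.

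\textbf{Step 3: the commutative algebra and the form $\cH_b$.} Given (1), the coordinate algebra $\mathcal A=\C\langle1,e_1,\ldots,e_m\rangle$ is a commutative associative algebra of dimension $d$ with multiplication
\[
e_ie_j=\mathrm{NF}(x_ix_j).
\]
After translating by the chosen support point, I would split off the local factor, or otherwise use the unit $1$ and the linear parts, to write $e_ie_j=b(e_i,e_j)$ plus a scalar part, and absorb the scalar parts by the congruence. Associativity is equivalent to the remaining mixed minors lying in the span: the cubic overlap condition $\mathrm{NF}(\mathrm{NF}(x_ix_j)x_k)$ being symmetric is exactly what the mixed minors $\xi_jD_{ik}-\xi_kD_{ij}$ encode. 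With this, $D_{ij}(1,\xi)=\xi(b(e_i,e_j))$, which is \eqref{III:eq:hankel}. Continuity of all choices as $E\to E_0$ gives $b\to0$, since at $E_0$ we have $D=0$.

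\textbf{Expected main obstacle.} The hardest step is Step 2: showing that rank exactly $d^\#$ forces both the normal-form property (each $\xi_i\xi_j$ reduces to a linear form) and associativity, with no extra relations. I would attack it by the same technique as the Type~I graph lemma. First, compare $\xi\xi\xi$-free and $u$-degree parts to express every other minor uniquely in the corner basis, since the $\xi\xi$-coefficients determine expansions in that basis. Then read off the associativity identities from the remaining coefficient comparisons.
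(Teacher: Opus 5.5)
There is a genuine gap. Your outline (graph chart, corner minors, dehomogenizing at $u=1$, the reduction $\mathrm{NF}$, moving a support point to $E_{00}$) has the right shape, but part (1) is never proved, and Step 1 contains a miscount that sends Step 2 in the wrong direction. Since $n=m+1$, $\binom n2=\binom{m+1}2$ is exactly the number of corner minors $uD_{ij}-\xi_i\xi_j$ ($1\le i\le j\le m$); it is not $\binom{m+1}2+\binom m2$. At $E_0$ the mixed minors $\xi_jD_{ik}-\xi_kD_{ij}$ vanish identically because $D=0$. The corner minors are independent for every plane in the chart, since their pure $\xi\xi$-parts are, so no semicontinuity is needed. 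They therefore already form a basis of the span. The hypothesis $\dim\Span C_2(E)=d^\#$ then says precisely that every other minor, in particular every mixed minor, is a constant combination of the corner minors. That is the information you need, not a condition on the lower minors $D_{ij}D_{kl}-D_{il}D_{kj}$.

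The count $\binom{d+1}2-d=\binom n2$ cannot by itself give length $d$. The ideal generated by the corner minors alone has exactly this many quadrics. Yet for generic small $D$, the overlap $x_2(x_1^2)=x_1(x_1x_2)$ already reduces to a nonzero affine-linear element of $\C[x]/(f_{ij})$, where $f_{ij}=x_ix_j-\mathcal D_{ij}(x)$. So $1,x_1,\dots,x_m$ become dependent and the length drops.

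The missing step is the commuting-multiplication-matrices (border basis) argument:
\begin{itemize}
\item Dehomogenizing the mixed-minor relations gives $\mathrm{NF}(x_k\mathcal D_{ij}-x_i\mathcal D_{kj})=0$.
\item Equivalently, the operators $M_kh=\mathrm{NF}(x_kh)$ on $V=\Span\{1,x_1,\dots,x_m\}$ satisfy $[M_k,M_i]x_j=0$. Also $[M_k,M_i]1=0$ by symmetry of $D$.
\item Hence $\varphi\mapsto\varphi(M)1$ kills every $f_{ij}$ and factors through $\mathcal A=\C[x]/(f_{ij})$. It sends $1,x_1,\dots,x_m$ to a basis of $V$.
\item Combined with the reduction showing these elements span $\mathcal A$, this gives $\dim\mathcal A=d$ and that no linear form vanishes on $Z_E$.
\end{itemize}

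Your Step 3 remark that the mixed minors encode the overlap condition is the right idea, but it belongs here, as the proof of (1). Once (1) holds, commutativity and associativity of $b$ are automatic. This is because $\mathcal A$ is a quotient of a polynomial ring, and after moving the support point to $E_{00}$ the relations $x_ix_j=\sum_kb^k_{ij}x_k$ have no constant term.

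Finally, setting $u=1$ requires the whole scheme to avoid $\{u=0\}$, not just one support point to lie near $[E_{00}]$. This holds because at $u=0$ the corner minors reduce to $-\xi_i\xi_j$.
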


\begin{proof}
For a plane sufficiently close to $E_0$, the entries in its first row
are independent linear coordinates. In these coordinates it has the form
\[
A(z_0,z)=\begin{pmatrix}z_0&z^t\\z&D(z_0,z)\end{pmatrix},
\quad z\in\C^m,
\]
where $D$ is symmetric, linear, and tends to zero as $E\to E_0$.
The minors
\[
\mathcal{Q}_{ij}=z_0D_{ij}(z_0,z)-z_i z_j\quad(1\le i\le j\le m)
\]
are linearly independent, as $z_0D_{ij}(z_0,z)$ has no
quadratic part with respect to $z$. The equality
of dimensions in \eqref{equ:IIIsharp plane} implies that every restricted
$2\times2$ minor is a constant linear combination of the $\mathcal{Q}_{ij}$.

Set $\mathcal{D}_{ij}(x)=D_{ij}(1,x)$ and $f_{ij}=x_i x_j-\mathcal{D}_{ij}(x)$. Each cross minor
$x_k\mathcal{D}_{ij}-x_i \mathcal{D}_{kj}$ is a constant linear combination of the $f_{ab}$;
therefore its linear reduction (cf. Definition \ref{def:linearreduction}) is zero. This follows because the homogeneous cross minor
$z_kD_{ij}-z_iD_{kj}$ belongs to the span of the $\mathcal{Q}_{ab}$, and setting
$z_0=1$ yields this assertion because $\mathcal{Q}_{ab}(1,x)=-f_{ab}$. 

Next we prove that on the affine chart $z_0=1$, the coordinate algebra of $Z_E$ \[
\mathcal A:=\C[x_1,\ldots,x_m]/(f_{ij}),
\]
has dimension $m+1=d$. The following argument is precisely the mechanism of the commuting multiplication matrices criterion for border bases, applied to the order ideal $\{1,x_1,\ldots,x_m\}$ (cf. \cite{KKR05}).
For an affine-linear
polynomial $h$, define $M_kh=\mathrm{NF}(x_kh)$.
On the vector space $V=\C\{1,x_1,\ldots,x_m\}$, we have
\[
M_i1=x_i,\quad M_ix_j=\mathcal{D}_{ij}(x).
\]
The preceding reduced cross-minor identity says
\[(M_kM_i-M_iM_k)x_j=0.\] The same commutator vanishes on $1$ by
the symmetry of $D$, so the $M_i$ commute. Let $M=(M_1,\cdots, M_m)$. Note that \[f_{ij}(M)1=M_iM_j1-\mathcal{D}_{ij}(M)1=0,\]
as $\mathcal{D}_{ij}(x)$ is linear. For every polynomial $\varphi\in \mathbb{C}[x_1,\cdots,x_m]$, by the commutativity of the $M_i$, $f_{ij}(M)$ also annihilates every $\varphi(M)1$.
Consequently $\varphi\mapsto \varphi(M_1,\ldots,M_m)1 \in V$ factors through
\[
\mathcal A=\C[x_1,\ldots,x_m]/(f_{ij}).
\]
As $f_{ij}=x_i x_j-\mathcal{D}_{ij}(x)$, repeated replacement of quadratic factors lowers total degree, so
every class in $\mathcal A$ lies in
\[
\Span\{1,x_1,\ldots,x_m\}.
\]
Under the map $\varphi\mapsto \varphi(M)1$, their images are the prescribed
basis $1,x_1,\ldots,x_m$ of $V$, so they are independent in
$\mathcal A$. Hence they are a basis of $\mathcal A$, which has
dimension $m+1=d$.

At $E_0$, we know coordinte algebra of $Z_{E_0}$ on $z_0=1$ is 
\[\mathcal{A}_{E_0}=\mathbb{C}[x_1,\cdots,x_m]/(x_ix_j)_{1\leq i,j\leq m},\]
and the set of support points $\supp(Z_{E_0})=\{[E_{00}]\}$. 
It has no intersection with $\{z_0=0\}$. The same remains true for all sufficiently close planes,
by compactness of that hyperplane. All restricted minors are generated
by the $\mathcal{Q}_{ij}$, so their entire projective zero scheme lies in the
chart $z_0\ne0$ and is precisely $\operatorname{Spec}\mathcal A$, the finite affine scheme whose coordinate algebra is $\mathcal A$.
It has length $d$ and spans $\mathbb P(E)$, since no nonzero linear
form vanishes in $\mathcal A$. This proves the first assertion.

Every support point of this scheme approaches $[E_{00}]$ as
$E\to E_0$, because that is the only support point of the limiting
scheme. Choose one such point, represented by a rank-one symmetric
matrix $vv^t$. A unitary congruence approaching the identity takes
its projective class to $[E_{00}]$. The transformed plane still lies
in the first-row coordinate chart and contains $E_{00}$. Its lower
block $D(z_0,z)$ therefore has no $z_0$ term; write it as $B_\xi$. As $f_{ij}=x_ix_j-\mathcal{D}_{ij}(x)$ and $\mathcal{D}_{ij}$ is linear, we know in this chart
\[
x_i x_j=\sum_k b^k_{ij}x_k
\]
with zero constant terms (as $D(z_0,z)$ has no $z_0$ term). They define the commutative associative
multiplication $b$ on the ideal $\Span\{x_1,\ldots,x_m\}$, and the quotient algebra multiplication restricts to that ideal because
the relations have zero constant term. Its commutativity and
associativity are inherited from the polynomial quotient. The matrix
of the plane is exactly \eqref{III:eq:hankel}. All the operations
used on the target were unitary congruences. Since the lower-block
coefficients tend to zero, $b\to0$ as $E\to E_0$.
\end{proof}

\subsubsection{Some deformation calculation}
\label{subsec:deformcalcu}
We first specify the finite-dimensional spaces, coordinates and norms used in the deformation
calculation. All adjoints and orthogonal projections below refer to
the stated Hermitian metrics.

Let $W=\mathbb C^m$, $m\ge2$, and let
\[
c:\operatorname{Sym}^2W\to W
\]
be a commutative associative multiplication. In an orthonormal basis
$(e_1,\dots,e_m)$, write
\[
c(e_i,e_j)=\sum_{k=1}^m c_{ij}^{\,k}e_k.
\]
For $\xi\in W^*$, let $\xi_k=\xi(e_k)$ and define
\[
(C_\xi)_{ij}=\sum_{k=1}^m c_{ij}^{\,k}\xi_k.
\]
Equivalently, $C_\xi$ is the symmetric matrix of the bilinear form
\[
(u,v)\mapsto \xi(c(u,v)),\quad u,v\in W.
\]
This yields a linear map
\[
C:W^*\to\mathscr S,\quad \xi\mapsto C_\xi,
\]
where $\mathscr S=\operatorname{Sym}^2W^*$ is identified with the space of
symmetric $m\times m$ matrices. Both $W^*$ and $\mathscr S$ are equipped
with the Hermitian metrics induced by the chosen orthonormal basis. In particular, the matrix metric is the full
Hilbert--Schmidt metric. A unitary change of basis acts
contragrediently on covector columns and preserves all norms below.
In particular,
\[
\langle H,G\rangle_{\mathscr S}=\sum_{i,j=1}^mH_{ij}\overline{G_{ij}},
\quad
\|H\|_{\mathscr S}^2=\sum_i|H_{ii}|^2+2\sum_{i<j}|H_{ij}|^2.
\]
 The adjoint $C^*:\mathscr S\to W^*$ is characterized by
$\langle C_\xi,H\rangle=\langle \xi,C^*H\rangle$ and has coordinates
\[
(C^*H)_k=\sum_{i,j=1}^m\overline{c_{ij}^{\,k}}H_{ij}.
\]
Define also
\[
\mathscr T=W^*\otimes\Lambda^2W^*,\quad
\mathscr V=\Sym^2(\Lambda^2W^*),
\]
represented respectively by $m\times\binom m2$ matrices and symmetric
$\binom m2\times\binom m2$ matrices. Their norms are the full
Hilbert--Schmidt norms in the dual orthonormal and increasing wedge
bases. For
\(H\in\mathscr S\), set
\begin{align}
(D_cH)_{i,jk}
 &=\sum_r(c_{ik}^{\,r}H_{jr}-c_{ij}^{\,r}H_{kr}),\\
\mathsf S_\xi(H)_{i,jk}
 &=\xi_jH_{ik}-\xi_kH_{ij},\\
\mathsf K(C,H)_{ij,k\ell}
 &=C_{ik}H_{j\ell}+H_{ik}C_{j\ell}
   -C_{i\ell}H_{jk}-H_{i\ell}C_{jk}.
\end{align}
$D_c:\mathscr S\to\mathscr T$ is linear,
$\mathsf S_\xi:\mathscr S\to\mathscr T$ is linear for fixed $\xi$, and
$\mathsf K:\mathscr S\times\mathscr S\to\mathscr V$ is bilinear.
The map $E_c:\mathscr S\to\mathscr V$ is the unique linear map with
\begin{equation}
    E_c(\xi \xi^t)=C_2(C_\xi).
\end{equation} 
The letter $K$ without a sans-serif font below denotes a skew-Hermitian
operator on $\mathscr S$, and is distinct from the bilinear map
$\mathsf K$.

For a linear subspace $L$ of a Hermitian space, $P_L$ denotes its
orthogonal projection. In this subsection,
\[\widetilde Q_2(X,Y):=C_2(X+Y)-C_2(X)-C_2(Y)\] is the polarized second compound
on symmetric $(m+1)\times(m+1)$ matrices. Its lower-block analogue
on $m\times m$ matrices is $\mathsf K$. 
Recall that 
\[
\cH_b=
\left\{
J_b(u,\xi)=
\begin{pmatrix}
u&\xi^t\\
\xi&B_\xi
\end{pmatrix}:u\in\C,\ \xi\in(\C^m)^*
\right\}.
\]
We set
\[
R_b=\Span C_2(\cH_b),\quad N_b=\cH_b^\perp.
\]
Let $T_b:R_b\to N_b$ be the isometry furnished by the quadratic
identity (cf. Lemma \ref{lem:quadratic-common}). Note that $d^\#=\dim \mathscr S=\dim R_b=\dim N_b$. To describe the coordinate matrix for $T_b$, we use $\mathscr S$ and the following graph frame. For the convenience of further computation, we write $b=\varepsilon c$ for $\varepsilon>0$.

For the block matrix expression of the second compound matrix, we split the index sets into $(0,i)$, $1\le i\le m$, followed by $(i,j)$, $1\le i<j\le m$. This is the decomposition corresponding to a distinguished first coordinate and the $m$ lower coordinates.
\begin{lemma}\label{lem:coordinate_rho_nu}
 For the commutative associative multiplication $c$, $ \rho_{\varepsilon c} :\mathscr S \rightarrow R_b, \nu_{\varepsilon c}:\mathscr S\rightarrow N_b$ given as follows are well-defined, and are linear isomorphisms. 
\begin{equation}
\rho_{\varepsilon c}(H)
=\begin{pmatrix}
-H&\varepsilon D_cH\\
\varepsilon(D_cH)^t&\varepsilon^2E_cH 
\end{pmatrix}
\label{equ:rho-eps_c}
\end{equation}

\begin{equation}
    \nu_{\varepsilon c}(H)
=\begin{pmatrix}
0&-\frac{\varepsilon}{2}(C^*H)^t\\
-\frac{\varepsilon}{2}C^*H&H
\end{pmatrix}
\label{equ:nuspsilonc}
\end{equation}
\end{lemma}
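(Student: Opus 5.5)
The plan is to verify directly that the two maps land in $R_b$ and in $N_b$, and then to get bijectivity from injectivity plus a dimension count. The key inputs are the block structure of the second compound $C_2$ and the commutativity and associativity of $c$. Throughout, $b=\varepsilon c$, so $B_\xi=\varepsilon C_\xi$. The indices of $\Lambda^2\C^{m+1}$ are split into the pairs $(0,i)$ and $(i,j)$ with $i<j$, as fixed above.

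\emph{Step 1: the compound of $J_b(u,\xi)$.} I will compute the $2\times 2$ minors of $J_b(u,\xi)$ block by block.
\begin{itemize}
\item Rows $(0,i)$, columns $(0,j)$: the minor is $u\varepsilon(C_\xi)_{ij}-\xi_i\xi_j$.
\item Rows $(0,i)$, columns $(j,k)$: the minor is $\varepsilon(\xi_j(C_\xi)_{ik}-\xi_k(C_\xi)_{ij})$.
\item Rows and columns both in the lower block: the minors form $\varepsilon^2C_2(C_\xi)$.
\end{itemize}
Substituting $(C_\xi)_{ik}=\sum_r c_{ik}^r\xi_r$, the middle block equals $\varepsilon D_c(\xi\xi^t)$. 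By the definition of $E_c$, the last block equals $\varepsilon^2E_c(\xi\xi^t)$. Hence
\[
C_2(J_b(u,\xi))=\rho_{\varepsilon c}(\xi\xi^t)+u\begin{pmatrix}\varepsilon C_\xi&0\\0&0\end{pmatrix}.
\]
Well-definedness of $E_c$ is automatic, since $\xi\mapsto C_2(C_\xi)$ is a homogeneous quadratic polynomial and the squares $\xi\xi^t$ span $\mathscr S$.

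\emph{Step 2: $\rho_{\varepsilon c}(\mathscr S)=R_b$.} I will show that the $u$-coefficient matrix equals $\rho_{\varepsilon c}(-\varepsilon C_\xi)$. This reduces to two vanishing statements.
\begin{itemize}
\item $D_c(C_\eta)=0$: its entries are $\eta\bigl((e_ie_k)e_j-(e_ie_j)e_k\bigr)$, which vanish by commutativity and associativity.
\item $E_c(C_\eta)=0$: in coordinates $E_c(H)_{ij,k\ell}=\sum_{r,s}H_{rs}c_{ik}^rc_{j\ell}^s$ antisymmetrized in $k\leftrightarrow\ell$ (and symmetrized in $r,s$). Inserting $H=C_\eta$ gives $\eta\bigl((e_ie_k)(e_je_\ell)\bigr)$ antisymmetrized in $k,\ell$, and this vanishes because the product is symmetric in $k,\ell$ by associativity and commutativity.
\end{itemize}
It follows that every vector $C_2(J_b(u,\xi))$ lies in $\rho_{\varepsilon c}(\mathscr S)$, so $R_b\subseteq\rho_{\varepsilon c}(\mathscr S)$. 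Conversely, $\rho_{\varepsilon c}(\xi\xi^t)=C_2(J_b(0,\xi))\in R_b$, and these vectors span $\rho_{\varepsilon c}(\mathscr S)$, so the two spaces are equal. The top-left block of $\rho_{\varepsilon c}(H)$ is $-H$, so $\rho_{\varepsilon c}$ is injective. Therefore $\rho_{\varepsilon c}$ is an isomorphism onto $R_b$, and $\dim R_b=\dim\mathscr S=d^\#$.

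\emph{Step 3: $\nu_{\varepsilon c}$ is an isomorphism onto $N_b$.} With $\langle X,Y\rangle=\tr(XY^*)$ on $\Sym_n(\C)$, each off-diagonal block contributes twice. This gives
\[
\langle J_b(u,\xi),\nu_{\varepsilon c}(H)\rangle=-\varepsilon\langle \xi,C^*H\rangle+\varepsilon\langle C_\xi,H\rangle=0,
\]
by the definition of the adjoint $C^*$ (up to the conjugation convention for covector columns, which I will track carefully). Hence $\nu_{\varepsilon c}(\mathscr S)\subseteq N_b$. The lower block of $\nu_{\varepsilon c}(H)$ is $H$, so $\nu_{\varepsilon c}$ is injective. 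Since $\dim N_b=\dim\Sym_n(\C)-d=d^\#=\dim\mathscr S$, $\nu_{\varepsilon c}$ is an isomorphism onto $N_b$.

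The main obstacle I expect is Step 2: the identification of $E_c$ in coordinates, keeping the index order and signs consistent with the definitions of $D_c$ and $\mathsf K$, and checking that the $u$-linear part is genuinely absorbed by $\rho_{\varepsilon c}$. This absorption is exactly where associativity of $c$ is used.
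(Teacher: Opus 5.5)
Your proposal is correct and follows essentially the same route as the paper. Both expand the minors blockwise to get $C_2(J_{\varepsilon c}(u,\xi))=\rho_{\varepsilon c}(\xi\xi^t-\varepsilon uC_\xi)$, absorbing the $u$-part through $D_cC_\xi=E_cC_\xi=0$ (from commutativity and associativity), and read off injectivity from the $-H$ block. The only minor difference is that for $\nu_{\varepsilon c}$ you use inclusion, injectivity and the count $\dim N_b=d^\#=\dim\mathscr S$. The paper instead solves the orthogonality condition directly, which forces $\alpha=0$ and $2v+\varepsilon C^*H=0$ and identifies $N_b$ exactly.
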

\begin{proof}
The formula for \(E_c\) is obtained by polarizing
\(E_c(\xi \xi^t)=C_2(C_\xi)\); explicitly,
\[
(E_cH)_{ij,k\ell}
=\sum_{a,b}H_{ab}
\bigl(c_{ik}^{\,a}c_{j\ell}^{\,b}
      -c_{i\ell}^{\,a}c_{jk}^{\,b}\bigr)
\quad(i<j,\ k<\ell).
\]
To see that the image of $\rho_{\varepsilon c}$ lies in $R_b$, let $(C^{\,a})_{ij}=c_{ij}^{\,a}$.
For $i,j,k$, the entry
\[
(D_cC^{\,a})_{i,jk}
=\sum_r\bigl(c_{ik}^{\,r}c_{jr}^{\,a}
             -c_{ij}^{\,r}c_{kr}^{\,a}\bigr)
\]
is the coefficient of $e_a$ in
$c(e_j,c(e_i,e_k))-c(e_k,c(e_i,e_j))$, hence is zero by
commutativity and associativity. Similarly,
\[
(E_cC^{\,a})_{ij,k\ell}
=\sum_{r,s}c_{rs}^{\,a}
 (c_{ik}^{\,r}c_{j\ell}^{\,s}-c_{i\ell}^{\,r}c_{jk}^{\,s})
\]
is the coefficient of $e_a$ in
\[
c(c(e_i,e_k),c(e_j,e_\ell))
-c(c(e_i,e_\ell),c(e_j,e_k)),
\]
which is also zero. Thus $D_cC_\xi=E_cC_\xi=0$ for every $\xi$.
Expanding the minors of $J_{\varepsilon c}(u,\xi)$ now yields
\begin{equation}\label{equ: C2action on hankel}
C_2(J_{\varepsilon c}(u,\xi))
=\begin{pmatrix}
\varepsilon uC_\xi-\xi \xi^t&\varepsilon D_c(\xi \xi^t)\\
\varepsilon(D_c(\xi \xi^t))^t&\varepsilon^2E_c(\xi \xi^t)
\end{pmatrix}\\
=\rho_{\varepsilon c}(\xi \xi^t-\varepsilon uC_\xi). 
\end{equation}
The rank-one symmetric matrices $\xi \xi^t$ span $\mathscr S$, so the
values at $u=0$ already span $\rho_{\varepsilon c}(\mathscr S)$.
Conversely the formula puts every second compound in that image.
Hence this image is exactly $R_{\varepsilon c}$, and $\rho_{\varepsilon c}$ is
injective because its upper-left block is $-H$. Hence $\rho_{\varepsilon c}$ is an isomorphism, as $\dim \mathscr S=\dim R_b=d^\#.$

For $\nu_{\varepsilon c}$, consider the pair of a symmetric block matrix
$X=\left(\begin{smallmatrix}\alpha&v^t\\v&H\end{smallmatrix}\right)$
with $J_{\varepsilon c}(u,\xi)$. Orthogonality for all $(u,\xi)$ means
\[
\alpha\bar u+2\langle v,\xi\rangle+
\varepsilon\langle H,C_\xi\rangle=0,
\]
so $\alpha=0$ and $2v+\varepsilon C^*H=0$. This proves that
$\nu_{\varepsilon c}$ is an isomorphism onto $N_{\varepsilon c}$.
\end{proof}

The graph frames $\rho_b:\mathscr S\to R_b$ and
$\nu_b:\mathscr S\to N_b$ displayed above yield its coordinate matrix
\[
\widetilde T_b=\nu_b^{-1}T_b\rho_b.
\]
Note that $\rho_b,\nu_b$ are not isometric unless $b=0$. Write the Gram operators as
$G_R=\rho_b^*\rho_b$ and $G_N=\nu_b^*\nu_b$. Since \(T_b\) is an isometry,
\[
\langle T_bx,T_by\rangle_{N_b}=\langle x,y\rangle_{R_b}.
\]
Writing \(x=\rho_b(\xi)\), \(y=\rho_b(\eta)\), and using
\(T_b\rho_b(\xi)=\nu_b(\widetilde T_b\xi)\), we get
\[
\langle G_N\widetilde T_b\xi,\widetilde T_b\eta\rangle_{\mathscr S}
=
\langle G_R\xi,\eta\rangle_{\mathscr S}.
\]
Hence the coordinate isometric equation is 
\[
\widetilde T_b^*G_N\widetilde T_b=G_R.
\]

We then compute the expansions in the following
propositions.

\begin{proposition}
\label{III:prop:exact-Q1}
For $b=\varepsilon c$ with $\varepsilon>0$, write
$\widetilde T_\varepsilon=\widetilde T_{\varepsilon c}$ and suppose
this coordinate isometry satisfies
\begin{equation}\label{equ:expand-T-second-order}
    \widetilde T_\varepsilon=I+\varepsilon^2L+o(\varepsilon^2),
\quad \widetilde T_\varepsilon^*G_N\widetilde T_\varepsilon=G_R.
\end{equation}
Then
\begin{equation}
L=D_c^*D_c-\frac14CC^*+K,\quad K^*=-K,
\label{III:eq:L-expansion}
\end{equation}
for some skew-Hermitian operator $K$ on $\mathscr S$.

\end{proposition}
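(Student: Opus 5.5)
The plan is to expand both sides of the coordinate isometry equation $\widetilde T_\varepsilon^*G_N\widetilde T_\varepsilon=G_R$ to order $\varepsilon^2$, using the explicit graph frames of Lemma~\ref{lem:coordinate_rho_nu}, and to read off the Hermitian part of $L$. The skew-Hermitian part of $L$ is not determined by this equation, and it is absorbed into $K$.

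\emph{Step 1: compute $G_R=\rho_{\varepsilon c}^*\rho_{\varepsilon c}$.} Both $R_b$ and $\mathscr S$ carry Hilbert--Schmidt metrics. The first is inherited from $\mathcal H_2=\operatorname{Hom}(\Lambda^2\C^n,\Lambda^2\C^n)$, and the second is the full Hilbert--Schmidt metric on symmetric $m\times m$ matrices. The norm squared of $\rho_{\varepsilon c}(H)$ is therefore the sum of the squared Hilbert--Schmidt norms of its four blocks in the splitting of index pairs into $(0,i)$ and $(i,j)$. The upper-left block is indexed by $(0,i),(0,j)$ and has entries $-H_{ij}$ over all $i,j$, so it contributes exactly $\|H\|_{\mathscr S}^2$. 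The two off-diagonal blocks $\varepsilon D_cH$ and $\varepsilon (D_cH)^t$ each contribute $\varepsilon^2\|D_cH\|_{\mathscr T}^2$. The lower-right block contributes $O(\varepsilon^4)$. Polarizing gives
\[
G_R=I+2\varepsilon^2D_c^*D_c+O(\varepsilon^4).
\]

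\emph{Step 2: compute $G_N=\nu_{\varepsilon c}^*\nu_{\varepsilon c}$.} Here the ambient metric on $\Sym_n(\C)$ is $\operatorname{tr}(XY^*)$, so the off-diagonal row and column $-\frac{\varepsilon}{2}C^*H$ are both counted. This gives $\|\nu_{\varepsilon c}(H)\|^2=\|H\|^2+2\cdot\frac{\varepsilon^2}{4}\|C^*H\|^2$, and hence, exactly,
\[
G_N=I+\frac{\varepsilon^2}{2}CC^*.
\]

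\emph{Step 3: compare coefficients.} Substituting $\widetilde T_\varepsilon=I+\varepsilon^2L+o(\varepsilon^2)$ into the left side gives
\[
\widetilde T_\varepsilon^*G_N\widetilde T_\varepsilon=I+\varepsilon^2\Bigl(L+L^*+\tfrac12CC^*\Bigr)+o(\varepsilon^2).
\]
Equating the $\varepsilon^2$ coefficients of the two sides yields
\[
L+L^*=2D_c^*D_c-\tfrac12CC^*.
\]
Define $K=L-D_c^*D_c+\frac14CC^*$. Since $D_c^*D_c$ and $CC^*$ are Hermitian, $K+K^*=0$, which is exactly \eqref{III:eq:L-expansion}.

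\emph{Main obstacle.} The only delicate point is the bookkeeping of metrics in Steps~1 and~2, since factors of $2$ are easy to lose. Three checks are needed. First, the upper-left block of $\rho$ must reproduce the full $\mathscr S$-norm, with off-diagonal entries counted twice. Second, both off-diagonal blocks of the compound matrix must be counted. Third, the $\Sym_n$ metric must count both copies of the first row and column. I would verify these conventions against the orthonormality statements already recorded in the paper: the $E_{ii}$ and $(E_{ij}+E_{ji})/\sqrt2$ form an orthonormal system, and the coordinates of $A_0^{III}$ are orthonormal.
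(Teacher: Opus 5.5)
Your proposal is correct and is essentially the paper's proof. The paper reads off the Gram operators $G_N=I+\frac{\varepsilon^2}{2}CC^*$ and $G_R=I+2\varepsilon^2D_c^*D_c+\varepsilon^4E_c^*E_c$ from the squared Hilbert--Schmidt norms of the blocks of $\nu_{\varepsilon c}$ and $\rho_{\varepsilon c}$, compares $\varepsilon^2$-coefficients to get $L+L^*+\frac12CC^*=2D_c^*D_c$, and sets $K=\frac12(L-L^*)$, which coincides with your $K=L-D_c^*D_c+\frac14CC^*$ by that relation.
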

\begin{proof}
    From Lemma \ref{lem:coordinate_rho_nu},
taking squared Hilbert–Schmidt norms of the blocks yields the exact Gram operators $G_N=\nu_{\varepsilon c}^*\nu_{\varepsilon c}, G_R=\rho^*_{\varepsilon c}\rho_{\varepsilon c}$ as
\begin{align}
G_N&=I+\frac{\varepsilon^2}{2}CC^*,\label{equ:GN}\\
G_R&=I+2\varepsilon^2D_c^*D_c+\varepsilon^4E_c^*E_c \label{equ:GR}.
\end{align}
As the coordinate map satisfies
$\widetilde T_\varepsilon^*G_N\widetilde T_\varepsilon=G_R$, comparing the coefficient of $\varepsilon^2$ in this equality
yields \begin{equation}
L+L^*+\frac12 CC^*=2D_c^*D_c.
\label{III:eq:L-selfadjoint}
\end{equation}
Let $K=\frac{L-L^*}{2}$, we obtain \eqref{III:eq:L-expansion}. 
\end{proof}

\begin{proposition} \label{III:prop:exact-Q2}
We inherit the notation from Proposition \ref{III:prop:exact-Q1} and let
\[
P=\xi \xi^t,\quad
\mathsf R_c=D_c^*D_c+\frac14CC^*,\quad
H_\xi=(\mathsf R_c+K)P .
\]
At $u=0$ write $A_\varepsilon(\xi)=J_{\varepsilon c}(0,\xi)$. 
Then $F_2(\xi)=\nu_{\varepsilon c}(\widetilde T_\varepsilon P)$. Suppose the coordinate isometry satisfies
\begin{equation}\label{equ:expand-T-second-order-rep}
    \widetilde T_\varepsilon=I+\varepsilon^2L+o(\varepsilon^2),
\quad \widetilde T_\varepsilon^*G_N\widetilde T_\varepsilon=G_R.
\end{equation}
The diagonal
defect of \eqref{eq:cubic-common} is the following real-valued
polynomial in $\xi$ and $\bar \xi$:
\[
\Delta_\varepsilon(\xi)=
\bigl\|P_{R_{\varepsilon c}^{\perp}}
 \widetilde Q_2(A_\varepsilon(\xi),F_2(\xi))\bigr\|^2
-\|C_3(A_\varepsilon(\xi))\|^2.
\]
For an admissible isometry this defect vanishes. Its expansion is
$\Delta_\varepsilon(\xi)=\varepsilon^4Q_{c,K}(\xi)+o(\varepsilon^4)$,
where
\begin{equation}
\;
Q_{c,K}(\xi)=
2\|\mathsf S_\xi(H_\xi)\|_{\HS}^2
+2\operatorname{Re}
\langle\mathsf K(C_\xi,P),\mathsf K(C_\xi,H_\xi)\rangle_{\HS}
-2\|Z_2(\xi,C_\xi)\|_{\HS}^2 .
\;
\label{III:eq:exact-Q}
\end{equation}
Here, for $i<j$ and $k<\ell<s$,
\begin{align}
(Z_2)_{ij,k\ell s}
={}&\xi_k\bigl((C_\xi)_{i\ell}(C_\xi)_{js}
             -(C_\xi)_{is}(C_\xi)_{j\ell}\bigr)\notag\\
&-\xi_\ell\bigl((C_\xi)_{ik}(C_\xi)_{js}
             -(C_\xi)_{is}(C_\xi)_{jk}\bigr)\notag\\
&+\xi_s\bigl((C_\xi)_{ik}(C_\xi)_{j\ell}
             -(C_\xi)_{i\ell}(C_\xi)_{jk}\bigr).
\label{III:eq:Z2}
\end{align}
\end{proposition}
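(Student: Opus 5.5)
The plan is a direct perturbative expansion in $\varepsilon$ of the two terms in $\Delta_\varepsilon$, organised by the block decomposition of $\mathcal H_2$ and $\mathcal H_3$ into index pieces (distinguished index $0$ present or absent).

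\emph{Step 1: write $A_\varepsilon(\xi)$ and $F_2(\xi)$.} At $u=0$, \eqref{equ: C2action on hankel} gives $C_2(A_\varepsilon(\xi))=\rho_{\varepsilon c}(P)$. Lemma \ref{lem:quadratic-common} gives $F_2=T_{\varepsilon c}C_2(A_\varepsilon)$, hence
\[
F_2(\xi)=\nu_{\varepsilon c}(\widetilde T_\varepsilon P).
\]
By \eqref{equ:nuspsilonc} and the expansion $\widetilde T_\varepsilon=I+\varepsilon^2L+o(\varepsilon^2)$,
\[
F_2(\xi)=\begin{pmatrix}0&-\tfrac{\varepsilon}{2}(C^*\widetilde T_\varepsilon P)^t\\ -\tfrac{\varepsilon}{2}C^*\widetilde T_\varepsilon P&\widetilde T_\varepsilon P\end{pmatrix}.
\]
I then expand $\widetilde Q_2(A_\varepsilon,F_2)$ blockwise:
\begin{itemize}
\item the $(0i,0j)$-type block is bilinear in the first rows and columns;
\item the mixed $(0i,jk)$ block is built from $\xi$ against the lower block of $F_2$, namely $\mathsf S_\xi$-type terms, plus $\varepsilon C_\xi$ against the off-diagonal column;
\item the $(ij,k\ell)$ block is $\varepsilon\mathsf K(C_\xi,\widetilde T_\varepsilon P)$ plus corrections.
\end{itemize}
Similarly $C_3(A_\varepsilon(\xi))$ has blocks $(0ij,k\ell s)$ of order $\varepsilon^2$ (namely $Z_2$-type terms, using $\xi$ and two entries of $C_\xi$) and a pure lower block of order $\varepsilon^3$. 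So $\|C_3(A_\varepsilon)\|^2=\varepsilon^4\cdot(\text{multiplicity})\|Z_2\|^2+o(\varepsilon^4)$, which accounts for the $-2\|Z_2\|^2$ term once the symmetrisation factor $2$ from the two placements of index $0$ is tracked.

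\emph{Step 2: the projection.} Write $P_{R^\perp}=I-\rho(G_R)^{-1}\rho^*$ with $G_R=I+2\varepsilon^2D_c^*D_c+O(\varepsilon^4)$ from \eqref{equ:GR}. Applied to a vector $Y=\widetilde Q_2(A_\varepsilon,F_2)$, only components surviving modulo $R$ matter. At $\varepsilon=0$, Proposition \ref{prop:cubic-common} (which must hold exactly in the standard case) gives vanishing of the $\varepsilon^0$ term. I will show the $\varepsilon^2$ term also cancels, since it is the exact analogue of the identity already satisfied by the model. So $\Delta_\varepsilon=O(\varepsilon^4)$.

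\emph{Step 3: extract the $\varepsilon^4$ coefficient.} Substitute $L=D_c^*D_c-\tfrac14CC^*+K$ from Proposition \ref{III:prop:exact-Q1}. The first-order perturbations of $Y$ and of the projection combine. The $\mathsf S_\xi$ block of $Y$ is projected against the $\varepsilon D_cH$ entry of $\rho$; the resulting component orthogonal to $R$ at order $\varepsilon^2$ is $\mathsf S_\xi(H_\xi)$. Here the sign flip $-\tfrac14CC^*\to+\tfrac14CC^*$ in $\mathsf R_c$ comes from the $-\tfrac{\varepsilon}{2}C^*$ column of $F_2$ contributing an additional $\tfrac12 CC^*P$. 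Squaring gives $2\|\mathsf S_\xi(H_\xi)\|^2$. The lower block gives the cross term $2\operatorname{Re}\langle\mathsf K(C_\xi,P),\mathsf K(C_\xi,H_\xi)\rangle$: the $\varepsilon\mathsf K(C_\xi,P)$ piece is paired with its $\varepsilon^3$ correction from $L$, while its own square $\varepsilon^2\|\mathsf K(C_\xi,P)\|^2$ is cancelled against the corresponding $\varepsilon^2$ part of $C_3$ and the $R$-projection.

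\emph{Main obstacle.} The main obstacle is bookkeeping in Step 3. I must verify that all $\varepsilon^2$ and $\varepsilon^3$ terms cancel, using $D_cC_\xi=E_cC_\xi=0$ and the associativity identities from Lemma \ref{lem:coordinate_rho_nu}. I must also check that the numerical factors (the $\tfrac12$ in $G_N$, the multiplicities of index placements in compounds) come out as stated. To keep this honest, I would first verify the formula when $m=2$ with a one-parameter algebra $c$, and only then do the general index computation.
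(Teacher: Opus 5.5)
Your overall route, a blockwise expansion using the frames $\rho_{\varepsilon c},\nu_{\varepsilon c}$ and $G_R^{-1}=I-2\varepsilon^2D_c^*D_c+O(\varepsilon^4)$, is the paper's route. However, the bookkeeping as you sketch it fails at the leading order.

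\textbf{The missing block of $C_3$.} In Step 1 you list the blocks of $C_3(A_\varepsilon(\xi))$ as the $(0ij,k\ell s)$ blocks at order $\varepsilon^2$ and the pure lower block at order $\varepsilon^3$. From this you conclude $\|C_3(A_\varepsilon)\|^2=O(\varepsilon^4)$. You have omitted the $(0ij,0k\ell)$ block. Expanding along the distinguished row (with $u=0$), this block is exactly $-\varepsilon\,\mathsf K(C_\xi,P)$. Hence
$\|C_3(A_\varepsilon)\|^2=\varepsilon^2\|\mathsf K(C_\xi,P)\|^2+2\varepsilon^4\|Z_2\|^2+\varepsilon^6\|C_3(C_\xi)\|^2$.

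This order-$\varepsilon$ block is precisely what cancels the $\varepsilon^2\|\mathsf K(C_\xi,P)\|^2$ coming from the lower block of $\widetilde Q_2(A_\varepsilon,F_2)$. That lower block is exactly $\varepsilon\mathsf K(C_\xi,\widetilde T_\varepsilon P)$, with no corrections. The $R$-projection plays no role in this cancellation, since its lower block is $\varepsilon^2E_ch_\varepsilon=O(\varepsilon^3)$. Appealing to an ``analogue of the identity satisfied by the model'' is not an argument, because at $c=0$ every term vanishes identically. Taken literally, your Step 1 gives $\Delta_\varepsilon=\varepsilon^2\|\mathsf K(C_\xi,P)\|^2+\cdots$, which contradicts your own Step 3.

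\textbf{Where $H_\xi$ actually comes from.} The second gap is the mechanism producing $H_\xi=LP+\tfrac12CC^*P$. The relevant projection comes from the $(0i,0j)$ block, not the mixed block.
\begin{itemize}
\item Because $\mathsf S_\xi(P)=0$ and $\widetilde T_\varepsilon$ has no $\varepsilon$-linear term, the mixed block of $\widetilde Q_2$ is $O(\varepsilon^2)$. Its own projection therefore only affects order $\varepsilon^4$ in that block, so it is negligible.
\item The $(0i,0j)$ block is $\varepsilon J_\varepsilon$ with $J_\varepsilon\to A_1=\tfrac12(\xi\chi^t+\chi\xi^t)$ and $\chi=C^*P$. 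Hence $h_\varepsilon=-\varepsilon A_1+O(\varepsilon^3)$.
\item Subtracting $\rho_{\varepsilon c}(h_\varepsilon)$ adds $\varepsilon^2D_cA_1$ to the mixed block and $\varepsilon^3E_cA_1$ to the lower block.
\end{itemize}
You then need two polarization identities:
\begin{itemize}
\item $D_cA_1=\tfrac12\mathsf S_\xi(C_\chi)+\tfrac12\mathsf R(\chi,C_\xi)$. Its $\mathsf R$-part cancels the direct term $-\tfrac{\varepsilon^2}{2}\mathsf R(\chi,C_\xi)$, which arises from the $-\tfrac{\varepsilon}{2}C^*H$ column of $F_2$ paired against $\varepsilon C_\xi$.
\item $E_cA_1=\tfrac12\mathsf K(C_\xi,C_\chi)$.
\end{itemize}
Only with both identities do the order-$\varepsilon^2$ mixed and order-$\varepsilon^3$ lower coefficients become $\mathsf S_\xi(H_\xi)$ and $\mathsf K(C_\xi,H_\xi)$.

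Your lower-block description, pairing $\varepsilon\mathsf K(C_\xi,P)$ ``with its $\varepsilon^3$ correction from $L$,'' would produce $\mathsf K(C_\xi,LP)$. That carries $-\tfrac14CC^*$ instead of $+\tfrac14CC^*$, so it gives the wrong second term of $Q_{c,K}$. Finally, the absence of odd-order terms should be stated explicitly: the lower block has no $\varepsilon^2$ piece, the mixed block starts at $\varepsilon^2$, and the $(0i,0j)$ block of the $R^\perp$-component is $O(\varepsilon^3)$.
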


\begin{proof}
Write
$\widetilde T_\varepsilon=\widetilde T_{\varepsilon c}, T_\varepsilon=T_{\varepsilon c}$. At $u=0$, let
\[
A_\varepsilon(\xi)=
\begin{pmatrix}0&\xi^t\\\xi&\varepsilon C_\xi\end{pmatrix}=J_{\varepsilon c}(0,\xi),
\quad \chi=C^*P,\quad M=LP,\quad
A_1=\frac12(\xi \chi^t+\chi \xi^t).
\]
From \eqref{equ:GR}, we have
\[
G^{-1}_R=(\rho_{\varepsilon c}^*\rho_{\varepsilon c})^{-1}
=I-2\varepsilon^2D_c^*D_c+O(\varepsilon^4).
\]
Let \[V_\varepsilon=P_{R_{\varepsilon c}^{\perp}}
\widetilde Q_2(A_\varepsilon(\xi),F_2(\xi)).\] 
From \eqref{eq:quadratic-common}, together with \eqref{equ: C2action on hankel}, we know
 \[ F_2(\xi)=T_{\varepsilon}C_2(A_\varepsilon(\xi))=T_\varepsilon\rho_{\varepsilon c}(P)=\nu_{\varepsilon c}(\widetilde T_\varepsilon P).\]
Here is the block calculation. Set
\[H_\varepsilon=\widetilde T_\varepsilon P,\quad
\quad \chi_\varepsilon=C^*H_\varepsilon, \quad
J_\varepsilon=(\xi \chi_\varepsilon^t+\chi_\varepsilon \xi^t)/2.\]
Write \[\Omega_\varepsilon=\widetilde Q_2(A_\varepsilon(\xi
),F_2(\xi))=C_2(A_\varepsilon(\xi)+F_2(\xi))-C_2(A_\varepsilon(\xi))-C_2(F_2(\xi)).\] 
By a straightforward computation, we expand each
$2\times2$ minor and present the exact blocks
\begin{align*}
\Omega_\varepsilon&=
\begin{pmatrix}
\varepsilon J_\varepsilon&Y_\varepsilon\\
Y_\varepsilon^t&Z_\varepsilon
\end{pmatrix},\\
Y_\varepsilon&=\mathsf S_\xi(H_\varepsilon)
             -\frac{\varepsilon^2}{2}\mathsf R(\chi_\varepsilon,C_\xi),\\
Z_\varepsilon&=\varepsilon\mathsf K(C_\xi,H_\varepsilon).
\end{align*}
where $\mathsf R(\chi,G)_{i,jk}=\chi_jG_{ik}-\chi_kG_{ij}$.
Its orthogonal projection onto $R_{\varepsilon c}$ is $\rho_{\varepsilon c}(h_\varepsilon)$, where
\[
h_\varepsilon=G_R^{-1}
 \bigl(-\varepsilon J_\varepsilon
       +2\varepsilon D_c^*Y_\varepsilon
       +\varepsilon^2E_c^*Z_\varepsilon\bigr).
\]
To verify this formula, use the defining property of orthogonal projection:
for every $H\in\mathscr S$,
\begin{equation}
    \langle P_{R_{\varepsilon c}}\Omega_\varepsilon,\rho_{\varepsilon c}(H)\rangle
=
\langle \Omega_\varepsilon,\rho_{\varepsilon c}(H)\rangle. \label{equ:projectiontoRepsilon}
\end{equation}
The left-hand side of \eqref{equ:projectiontoRepsilon} equals
\[
\langle \rho_{\varepsilon c}(h_\varepsilon),\rho_{\varepsilon c}(H)\rangle
=
\langle G_Rh_\varepsilon,H\rangle,
\]
while from \eqref{equ:rho-eps_c}, the right-hand side of \eqref{equ:projectiontoRepsilon} is
\[
\left\langle
-\varepsilon J_\varepsilon
+2\varepsilon D_c^*Y_\varepsilon
+\varepsilon^2E_c^*Z_\varepsilon,
H
\right\rangle.
\]
Hence
\[
G_Rh_\varepsilon
=
-\varepsilon J_\varepsilon
+2\varepsilon D_c^*Y_\varepsilon
+\varepsilon^2E_c^*Z_\varepsilon.
\]
By the assumption \eqref{equ:expand-T-second-order-rep}, \[H_\varepsilon=\widetilde T_\varepsilon P=P+\varepsilon^2M+o(\varepsilon^2).\] Also,
\[\mathsf S_\xi(P)_{i,jk}
 =\xi_jP_{ik}-\xi_kP_{ij}=0,\] as $P=\xi\xi^t$. Hence $\mathsf S_\xi(P)=0$, and we have
$Y_\varepsilon=O(\varepsilon^2)$,
$Z_\varepsilon=O(\varepsilon)$. Also from Proposition \ref{III:prop:exact-Q1}, we have $\chi_\varepsilon=\chi+O(\varepsilon^2)$, and then $J_\varepsilon=A_1+O(\varepsilon^2)$. Thus
$h_\varepsilon=-\varepsilon A_1+O(\varepsilon^3)$.
Subtracting $\rho_{\varepsilon c}(h_\varepsilon)$ from
$\Omega_\varepsilon$ therefore yields
\begin{align*}
(V_\varepsilon)_{00}&=O(\varepsilon^3),\\
(V_\varepsilon)_{01}&=\varepsilon^2\Theta+o(\varepsilon^2),\\
(V_\varepsilon)_{11}&=\varepsilon\mathsf K(C_\xi,P)
 +\varepsilon^3\bigl(\mathsf K(C_\xi,M)+E_cA_1\bigr)
 +o(\varepsilon^3),
\end{align*}
where
\[
\Theta=\mathsf S_\xi(M)-\frac12\mathsf R(\chi,C_\xi)+D_cA_1,\quad
\mathsf R(\chi,C)_{i,jk}=\chi_jC_{ik}-\chi_kC_{ij}.
\]
By inserting
$A_1=\frac{1}{2}(\xi \chi^t+\chi \xi^t)$ into the definition of $D_c$ and collecting
the terms with $\chi_j,\chi_k$ and those with $\xi_j,\xi_k$, we have
\begin{equation}
    D_cA_1=\frac12\mathsf S_\xi(C_\chi)
       +\frac12\mathsf R(\chi,C_\xi). \label{equ:polarization-D-c}
\end{equation}

Substituting this into the definition of \(\Theta\), the
\(\mathsf R(\chi,C_\xi)\)-terms cancel, and we obtain
\[
\Theta
=\mathsf S_\xi(M)+\frac12\mathsf S_\xi(C_\chi)
=\mathsf S_\xi\left(M+\frac12C_\chi\right).
\]
From \eqref{III:eq:L-expansion}, we know
\[
M=LP=\left(D_c^*D_c-\frac14CC^*+K\right)P.
\]
Also \(\chi=C^*P\), hence \(C_\chi=CC^*P\). Therefore
\[
M+\frac12C_\chi
=
\left(D_c^*D_c+\frac14CC^*+K\right)P
=
H_\xi.
\]
Thus
\[
\Theta=\mathsf S_\xi(H_\xi).
\]
By polarization of $E_c(\xi\xi^t)=C_2(C_\xi)$, we have
\begin{equation}
    E_cA_1=\frac12\mathsf K(C_\xi,C_\chi), \label{equ:polarization-E-c}
\end{equation}
then we get
\[
\mathsf K(C_\xi,M)+E_cA_1
=
\mathsf K(C_\xi,M)+\frac12\mathsf K(C_\xi,C_\chi)
=
\mathsf K\left(C_\xi,M+\frac12C_\chi\right)
=
\mathsf K(C_\xi,H_\xi).
\]
In summary, we have
\begin{align*}
(V_\varepsilon)_{00}&=O(\varepsilon^3),\\
(V_\varepsilon)_{01}&=\varepsilon^2\mathsf S_\xi(H_\xi)+o(\varepsilon^2),\\
(V_\varepsilon)_{11}
&=
\varepsilon\mathsf K(C_\xi,P)
+\varepsilon^3\mathsf K(C_\xi,H_\xi)
+o(\varepsilon^3).
\end{align*}
Finally, to present the block matrix expression, we split the third-compound indices into $(0,i,j)$ followed
by $(i,j,k)$, with all lower indices increasing. Expansion of a
$3\times3$ determinant along its distinguished row yields
\begin{equation}\label{equ:C_3_A}
C_3(A_\varepsilon(\xi))=
\begin{pmatrix}
-\varepsilon\mathsf K(C_\xi,P)&\varepsilon^2Z_2(\xi,C_\xi)\\
\varepsilon^2Z_2^t(\xi, C_\xi)&\varepsilon^3C_3(C_\xi)
\end{pmatrix}.
\end{equation}
Then the order-$\varepsilon^2$ squared norms cancel. Comparison at order
$\varepsilon^4$ yields \eqref{III:eq:exact-Q}. 
\end{proof}

In Proposition \ref{III:prop:metric-gap}, we will prove the expansion in \eqref{equ:expand-T-second-order}(or \eqref{equ:expand-T-second-order-rep}) holds before this calculation is applied.

\begin{remark}\label{expansion_holds_for_sequence}
    Note that the remainders are uniform when $c,L,\xi$ range over bounded sets, as the graph frames $\rho_{\epsilon c}, \nu_{\epsilon c}$ are polynomials and $G^{-1}_{R}$ is uniformly bounded.
\end{remark}

\subsubsection{The fourth order gap rigidity}\label{subsec:fourthgap}
We inherit the notation in Section \ref{subsec:deformcalcu}, unless otherwise specified. The purpose of this section is to prove the following proposition.
\begin{proposition}\label{III:lem:algebra-obstruction}
Let $W=\mathbb{C}^m,m\geq 2$.  If $c:\Sym^2W\to W$ is commutative and associative,
and $Q_{c,K}(\xi)=0$ for every $\xi\in W^*$ and some $K^*=-K$, then
$c=0$.
\end{proposition}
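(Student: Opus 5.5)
I plan to argue by contradiction: assume $c\neq 0$ and extract from the identity $Q_{c,K}\equiv 0$ a positivity statement that forces some component of $c$ to vanish. The first step is to use the structure theory of finite-dimensional commutative associative algebras. Since $b=\varepsilon c$ arises as a deformation of the nilpotent algebra $\mathcal A_{E_0}$, I expect to treat $(W,c)$ as a commutative associative algebra, possibly non-unital. My working guess is that only the nilpotent case matters, but I will not assume it. Instead I decompose $W$ by its Wedderburn/Fitting decomposition into idempotent and nilpotent parts, and I use the $U(m)$-covariance of all the quantities in \eqref{III:eq:exact-Q} to put $c$ in an adapted orthonormal basis. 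If $W$ has a nonzero idempotent, I choose $\xi$ dual to it. If $c$ is nilpotent but nonzero, I choose the flag $W\supset c(W,W)\supset c(W,c(W,W))\supset\cdots$, pick $\xi$ nonvanishing on the last nonzero power $W^{k}$ and vanishing on $W^{k+1}$, and work in a basis adapted to this flag.

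The second step is to separate the quartic form $Q_{c,K}$ into a part that does not depend on $K$ and a part that is linear in $K$. Only $H_\xi=(\mathsf R_c+K)P$ involves $K$. The skew-Hermitian operator $K$ is the only unknown, so the natural move is to integrate $Q_{c,K}(\xi)$ over the unit sphere of $W^*$ with respect to the $U(m)$-invariant measure. For this I would try writing the $K$-dependent terms as $\operatorname{Re}\langle K P, \Phi(\xi)\rangle$ plus $\|\mathsf S_\xi(KP)\|^2$, where $\Phi(\xi)$ is a quartic-in-$\xi$, $\mathscr S$-valued expression built from $\mathsf S_\xi^*\mathsf S_\xi\mathsf R_cP$ and $\mathsf K(C_\xi,\cdot)^*\mathsf K(C_\xi,P)$. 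The term $\|\mathsf S_\xi(KP)\|^2$ is nonnegative. I then hope to show that the linear term averages to $\operatorname{Re}\tr(K\mathcal B)$ with $\mathcal B$ Hermitian, which is zero because $K$ is skew. Any leftover $K$-term that does not vanish can still be dropped from a lower bound because it enters with a nonnegative sign.

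The third step is to bound the remaining $K$-free average $\int\bigl(2\|\mathsf S_\xi(\mathsf R_cP)\|^2+2\operatorname{Re}\langle\mathsf K(C_\xi,P),\mathsf K(C_\xi,\mathsf R_cP)\rangle-2\|Z_2(\xi,C_\xi)\|^2\bigr)$ from below by a positive multiple of $\|c\|^4$, or at least to show it is strictly positive when $c\ne0$. The middle term should dominate $Z_2$ through the term $\frac14CC^*P$ in $\mathsf R_c$: the quantity $\langle\mathsf K(C_\xi,P),\mathsf K(C_\xi,C_{C^*P})\rangle$ contains $\|\mathsf K(C_\xi,P)\|^2$-type contributions. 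The three-term Laplace structure of $Z_2$ should be controlled by these via a Cauchy--Schwarz or Plücker-type inequality, with associativity used to remove the cross terms. This is the step I expect to be the main obstacle, because the signs are delicate and positivity may hold only after averaging.

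As a fallback for this step, I would evaluate $Q_{c,K}$ at the special $\xi$ chosen in the first step together with small perturbations of it. Taking the lowest-order terms in the filtration, I would then show by induction on the nilpotency index that the top power $W^k$ must be zero, hence $c=0$. The idempotent case would be treated separately by direct computation with $\xi=e_1^*$ and $c(e_1,e_1)=e_1$, where $C_\xi=E_{11}$ and everything is explicit.
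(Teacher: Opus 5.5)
There is a genuine gap, and it sits exactly where $K$ has to be eliminated.

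\textbf{Averaging does not remove the $K$-linear term.} In your second step you average $Q_{c,K}$ over the sphere. You then hope that the part linear in $K$, namely the average of
$4\operatorname{Re}\langle \mathsf S_\xi(\mathsf R_cP),\mathsf S_\xi(KP)\rangle+2\operatorname{Re}\langle\mathsf K(C_\xi,P),\mathsf K(C_\xi,KP)\rangle$,
equals $\operatorname{Re}\operatorname{tr}(K\mathcal B)$ with $\mathcal B$ Hermitian. This already fails for $m=2$ and the semisimple algebra $c(e_1,e_1)=e_1$, $c(e_2,e_2)=e_2$, $c(e_1,e_2)=0$.
\begin{itemize}
\item There $C_\xi=\operatorname{diag}(\xi_1,\xi_2)$ and $\mathsf K(C_\xi,H)_{12,12}=\xi_1H_{22}+\xi_2H_{11}$.
\item $\mathsf R_c$ is diagonal with entries $\tfrac14,\tfrac14,1$ in the orthonormal basis $f_1=E_{11}$, $f_2=E_{22}$, $f_3=(E_{12}+E_{21})/\sqrt2$ of $\mathscr S$.
\item Write $Kf_b=\sum_a k_{ab}f_a$. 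The averaged $\mathsf S$-term only involves the purely imaginary diagonal entries $k_{aa}$, so it drops out.
\item The averaged $\mathsf K$-term equals $\tfrac{\sqrt2}{6}\operatorname{Re}(k_{13}+k_{23})$. This is nonzero for suitable skew-Hermitian $K$.
\end{itemize}
Your fallback, that a leftover $K$-term can be discarded because it enters with a nonnegative sign, is also wrong. Only $2\|\mathsf S_\xi(KP)\|^2$ has a sign; a linear functional of $K$ can be made negative.

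After averaging you would therefore need a quantitative inequality $\int Q_{c,0}\,d\mu>\max_K\bigl(-\ell(K)-q(K)\bigr)$ for every nonzero commutative associative $c$. Here $\ell$ and $q$ are the linear and quadratic parts in $K$. Nothing in the proposal supplies this. Even the weaker claim $\int Q_{c,0}\,d\mu>0$ of your third step, including control of $Z_2$, is only conjectured.

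\textbf{The pointwise fallback does not close either.} At a character covector $\lambda_x$, which is what a covector dual to an idempotent gives, you have $C_{\lambda_x}=\lambda_x\lambda_x^t$. Hence $\mathsf K(C_{\lambda_x},P_0)=0$, $Z_2=0$, and $Q(\lambda_x)=2\|\mathsf S_{\lambda_x}(\mathcal AP_0)\|^2$. Its vanishing only gives $\mathcal AP_0=\alpha P_0$, which is no contradiction. Moreover, $C_{e_1^*}\propto E_{11}$ holds only when $eW=\mathbb Ce$ and $e\perp N$. The missing ingredient is the paper's way of making $K$ drop out at special points.
\begin{itemize}
\item At a unit annihilator vector $z$ one has $C^*(\xi_z\xi_z^t)=0$ and $\operatorname{Re}\langle KP,P\rangle=0$. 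This gives $Q(\xi_z)=2(\|B\|^4-\|C_2(B)\|^2)+2\|\mathsf S_{\xi_z}H\|^2$, which forces $B_z|_{z^\perp}=0$ (Proposition \ref{prop:annihilator}).
\item Along lines $\lambda_x+t\lambda_y$, use the $[t\bar t]$ and $[t^2\bar t^2]$ coefficients together with $\mathcal AP_0=\alpha P_0$. Skewness of $K$ gives $\operatorname{Re}\alpha=\langle\mathsf R_cP_0,P_0\rangle/\|P_0\|^2$, and Cauchy--Schwarz bounds this from below. This excludes two independent characters, and a character with an independent derivation or null functional (Propositions \ref{prop:no_independent_character} and \ref{prop:no_independent_derivation}).
\end{itemize}
In the example above this pointwise bound is tight: the last term of \eqref{III:eq:A6} vanishes. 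The contradiction there comes only from the information $h_{22}=0$ extracted from $K$.

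Your idempotent/nilpotent dichotomy also misses the mixed cases of $W=eW\oplus N$:
\begin{itemize}
\item $\dim eW\ge2$, handled by a derivation;
\item $N^2\ne0$;
\item $N^2=0$ with $e\not\perp N$, handled by the annihilator argument applied to $z\in N$ and the projection of $e$ to $z^\perp$;
\item $N^2=0$ with $e\perp N$, handled by a character together with $C_{\lambda_y}=0$.
\end{itemize}
In the purely nilpotent case no induction is needed. The last nonzero power lies in $\operatorname{Ann}(W)\cap W^2$, so any unit $z$ in it has $B_z|_{z^\perp}\neq0$.
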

In this section $R=\mathsf R_c=D_c^*D_c+\tfrac14CC^*$ is an operator on
$\mathscr S$, not the compound range space $R_b$. Let
$\mathcal A=R+K$, also an operator on $\mathscr S$. 
Thus, with $P=\xi \xi^t$, we have $H_\xi=\mathcal{A}P$, and
\begin{equation}
 Q(\xi)=Q_{c,K}(\xi)=2\|\mathsf S_\xi(\mathcal AP)\|^2+
 2\operatorname{Re}\langle\mathsf K(C_\xi,P),
                  \mathsf K(C_\xi,\mathcal AP)\rangle
 -2\|Z_2(\xi,C_\xi)\|^2.                                      \label{III:eq:A1}
\end{equation}
For $z\in W$, let $\xi_z \in W^*$ be the element with $\xi_z(u)=\langle u, z\rangle$.

We first prove some propositions that cover all nonzero
finite-dimensional commutative associative algebras.
\begin{proposition}\label{prop:annihilator}
  Let  $\operatorname{Ann}(W)=\{z\in W:c(z,v)=0\text{ for every }v\in W\}$ be the annihilator for the commutative associative multiplication $c$. For every unit vector $z\in \operatorname{Ann}(W)$, let
$B_z(u,v)=\langle c(u,v),z\rangle$. Assume that $Q(\xi_z)=0$, then the restriction of $B_z$ to $z^\perp$ is zero.
\end{proposition}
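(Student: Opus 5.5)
The plan is to evaluate the three terms of \eqref{III:eq:A1} explicitly at $\xi=\xi_z$ in an adapted orthonormal basis. The point is that, because $z$ annihilates, almost all the unknown data ($K$ and the rest of $c$) drop out. What remains should be a quartic inequality in the matrix $B'$ of $B_z|_{z^\perp}$ that forces $B'=0$. Since a unitary change of basis preserves all the norms involved (as noted in Section \ref{subsec:deformcalcu}), we may take $e_1=z$. Then $\xi:=\xi_z$ has coordinates $\xi_k=\delta_{k1}$, $P=\xi\xi^t=E_{11}$, and $(C_\xi)_{ij}=c^{\,1}_{ij}=B_z(e_i,e_j)$. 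Because $c(e_1,\cdot)=0$, the first row and column of $C_\xi$ vanish, so $C_\xi=0\oplus B'$ with $B'$ the $(m-1)\times(m-1)$ symmetric matrix of $B_z|_{z^\perp}$. The goal is $B'=0$.

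\textbf{Step 1: the two fixed terms.} In \eqref{III:eq:Z2} only $\xi_1$ survives and the $C$-entries with an index $1$ vanish. Hence $(Z_2)_{ij,1\ell s}=\det$ of the $2\times2$ submatrix of $B'$ on rows $i,j$ and columns $\ell,s$ (all $\ge2$), and every other entry is zero. Thus $\|Z_2(\xi,C_\xi)\|^2=\|C_2(B')\|_{\HS}^2$. Similarly, from the formula for $\mathsf K$ with $P=E_{11}$, the only possibly nonzero entries are $\mathsf K(C_\xi,H)_{1j,1\ell}=H_{11}B'_{j\ell}-H_{1\ell}(C_\xi)_{j1}-\dots$. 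For $H=P$ this gives $\mathsf K(C_\xi,P)_{1j,1\ell}=B'_{j\ell}$ ($j,\ell\ge2$), and all other entries vanish.

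\textbf{Step 2: the cross term.} For a general $H$, the entries $\mathsf K(C_\xi,H)_{1j,1\ell}$ equal $H_{11}B'_{j\ell}$, since the remaining summands involve $(C_\xi)_{1\cdot}=0$ or $(C_\xi)_{\cdot1}=0$. Therefore
\[
\operatorname{Re}\langle\mathsf K(C_\xi,P),\mathsf K(C_\xi,\mathcal AP)\rangle=\|B'\|^2_{\HS}\,\operatorname{Re}(\mathcal AP)_{11}=\|B'\|_{\HS}^2\operatorname{Re}\langle\mathcal AP,P\rangle_{\mathscr S}.
\]
Skew-Hermitian $K$ gives $\operatorname{Re}\langle KP,P\rangle=0$, so this equals $\|B'\|^2(\|D_cP\|^2+\tfrac14\|C^*P\|^2)$. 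Here $C^*P$ has coordinates $\overline{c^{\,k}_{11}}=0$. Also $(D_cP)_{i,1k}=c^{\,1}_{ik}$ for $k\ge2$, with zero entries elsewhere, so $\|D_cP\|^2=\|B'\|_{\HS}^2$ up to the exact bookkeeping of $\mathscr T$-indices (rows $i\ge2$, columns $(1,k)$). This shows that $K$ disappears completely from the middle term, and the middle term equals $2\|B'\|^4$, up to the multiplicity constants which I will check carefully.

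\textbf{Step 3: conclude.} Drop the nonnegative term $2\|\mathsf S_\xi(\mathcal AP)\|^2$. Then $Q(\xi_z)=0$ gives
\[
2\|B'\|_{\HS}^4\le 2\|C_2(B')\|_{\HS}^2 .
\]
Use the singular-value bound $\|C_2(B')\|^2=\sum_{i<j}\sigma_i^2\sigma_j^2\le\tfrac12(\sum\sigma_i^2)^2=\tfrac12\|B'\|^4$. This yields $\|B'\|^4\le\tfrac12\|B'\|^4$, hence $B'=0$. The main obstacle is Step 2 together with the normalization constants. I must verify that the HS norms on $\mathscr V=\Sym^2(\Lambda^2W^*)$ and $\mathscr T$ and on symmetric matrices (with factor $2$ on off-diagonal entries) produce a middle term whose coefficient strictly beats the factor $\tfrac12$ from the compound inequality. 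If the constants come out tight, I would retain the $\|\mathsf S_\xi(\mathcal AP)\|^2$ term: its $S$-entries $(\mathcal AP)_{ik}$ for $i,k\ge2$ contain $(D_c^*D_cP)_{ik}$, which is again expressible through $B'$, and this would supply the extra strict positivity.
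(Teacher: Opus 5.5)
Your proposal is correct and is essentially the paper's proof. The paper also places $z$ in an orthonormal basis (as $e_m$ rather than $e_1$) and finds $C^*P=0$, $\|D_cP\|^2=\|B\|^2$, $\mathsf K(C_{\xi_z},H)$ supported on the entries $(im,jm)$ with value $H_{mm}B_{ij}$, and $\|Z_2\|^2=\|C_2(B)\|^2$. It arrives at $Q(\xi_z)=2(\|B\|^4-\|C_2(B)\|^2)+2\|\mathsf S_{\xi_z}H\|^2$.

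Your worry about the constants is unfounded. With the full Hilbert--Schmidt norms, the middle term is exactly $2\|B'\|^4$ and the $Z_2$ term is exactly $2\|C_2(B')\|^2$. So your bound $\|C_2(B')\|^2\le\tfrac12\|B'\|^4$ already closes the argument, and the fallback via the $\mathsf S$-term is not needed. The paper uses the equivalent exact identity $\|B\|^4-\|C_2(B)\|^2=\sum_i s_i^4+\sum_{i<j}s_i^2s_j^2$.
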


\begin{proof}
Assume \(B_z|_{z^\perp}\neq0\). Choose an orthonormal basis with \(z=e_m\),
and write \(B=B_z\) on \(z^\perp\). Recall that for $\xi\in W^*$, $C_\xi$ is the symmetric matrix of the bilinear form
\[
(u,v)\mapsto \xi(c(u,v)),\quad u,v\in W,
\]
which yields a linear map
\[
C:W^*\to\mathscr S=\operatorname{Sym}^2W^*,\quad \xi\mapsto C_\xi.
\]
Since $z\in \operatorname{Ann}(W)$, in the orthonormal basis  $\xi_z=e_m^*$, we have $C_{\xi_z}:=C(\xi_z)=B\oplus0$. Also $C^*(P)=0$, for $P=\xi_z\xi_z^t$. Then we compute $D_cP$.
By definition,
\[
(D_cP)_{i,am}
=
\sum_r c_{im}^{\,r}P_{ar}
-\sum_r c_{ia}^{\,r}P_{mr}.
\]
Note that $P_{mm}=1$ is the only nonzero entry of $P$. Therefore, for $i,a<m$
\[
(D_cP)_{i,am}=-c_{ia}^{\,m}=-B_{ia},
\]
since \(B_{ia}=\langle c(e_i,e_a),z\rangle=c_{ia}^{\,m}\).
All other entries of \(D_cP\) vanish, as $c^r_{im}=0$ for any $i,r$. Since $C^*(P)=0$, we have \[H:=(D_c^*D_c+K)P=\mathcal{A}P,\] where $K$ is skew-Hermitian as in Proposition \ref{III:prop:exact-Q1}. Then we know the entry
\[
H_{mm}=\langle H, P\rangle=\langle (D_c^*D_c+K)P,P\rangle
=\langle D_c^*D_cP,P\rangle+\langle KP,P\rangle.
\]
Because $K^*=-K$, the term $\langle KP,P\rangle$ is purely imaginary. Hence
\[
\operatorname{Re}H_{mm}
=\langle D_c^*D_cP,P\rangle
=\|D_cP\|^2=\|B\|^2,
\]
where the last equality follows as $D_cP$ has nonzero entries $-B_{ia}$ for $i,a<m$.

In wedge coordinates $(i,m),(j,m)$, a direct calculation yields 
\[\mathsf K(C_{\xi_z},P)_{im,jm}=B_{ij}, \quad\mathsf K(C_{\xi_z},H)_{im,jm}=H_{mm}B_{ij}.\]
Also the nonzero entries of $Z_2(\xi_z,C_{\xi_z})$ are, up to signs, the
\(2\times2\) minors of \(B\). Hence \eqref{III:eq:A1} yields
\[
Q(\xi_z)=2\bigl(\|B\|^4-\|C_2(B)\|^2\bigr)+2\|\mathsf S_{\xi_z}H\|^2.
\]
If \(s_i\) are the singular values of \(B\), then
\[
\|B\|^4-\|C_2(B)\|^2
=\sum_i s_i^4+\sum_{i<j}s_i^2s_j^2>0.
\]
Thus $Q(\xi_z)>0$, which contradicts the assumption. 
\end{proof}

\begin{definition}
\label{def:character}
    A nonzero character is a nonzero complex linear functional
$\lambda:W\to\C$ with \[\lambda(c(u,v))=\lambda(u)\lambda(v).\]
For $x\in W$, $\lambda_x\in W^*$ is the element with $\lambda_x(u)=x^tu$.  $x$ is called a character vector if $\lambda_x$ is a character.
\end{definition}
For any $x,y\in W$, direct polarization yields the identities
\begin{equation}
 D_c(\lambda_x\lambda_x^t)=\mathsf S_{\lambda_x}(C_{\lambda_x}),\quad
 D_c(\lambda_x\lambda_y^t+\lambda_y\lambda_x^t)=\mathsf S_{\lambda_x}(C_{\lambda_y})+\mathsf S_{\lambda_y}(C_{\lambda_x}).                         \label{III:eq:A2}
\end{equation}
 We can see that $x$ is a character vector if and only if
$C_{\lambda_x}=\lambda_x\lambda_x^t$. 

\begin{lemma}\label{lem:SG-zero}
   For $G\in \mathscr S, x\neq 0$, if $\mathsf S_{\lambda_x}G=0$, then $G\in\C \lambda_x\lambda_x^t$.
\end{lemma}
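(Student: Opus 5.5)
We work entirely with the definition $\mathsf S_\xi(H)_{i,jk}=\xi_jH_{ik}-\xi_kH_{ij}$, specialized to $\xi=\lambda_x$. The claim is pure linear algebra: the identity $\mathsf S_{\lambda_x}G=0$ says that for every row index $i$, the row vector $G_{i\cdot}=(G_{i1},\dots,G_{im})$ satisfies $\xi_jG_{ik}=\xi_kG_{ij}$ for all $j,k$. This is exactly the vanishing of all $2\times 2$ minors of the $2\times m$ matrix with rows $\xi$ and $G_{i\cdot}$. Here $\xi=\lambda_x$, whose coordinates are $\xi_k=x_k$, so $\xi\neq0$ because $x\neq0$.

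First, fix $i$. Choose an index $k_0$ with $\xi_{k_0}\neq0$. Taking $k=k_0$ gives
\[
G_{ij}=\frac{G_{ik_0}}{\xi_{k_0}}\,\xi_j=:\mu_i\xi_j
\]
for all $j$. The case $j=k$ is trivially consistent, so we need not worry that the definition of $\mathsf S_\xi$ is only recorded for $j<k$: antisymmetry in $(j,k)$ covers all pairs. Hence every row of $G$ is a multiple of $\xi^t$, that is, $G=\mu\xi^t$ for a column vector $\mu=(\mu_1,\dots,\mu_m)^t$.

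Second, use that $G\in\mathscr S$ is symmetric. From $\mu\xi^t=\xi\mu^t$, evaluating the $k_0$-th column gives $\xi_{k_0}\mu=\mu_{k_0}\xi$. Thus $\mu=\alpha\xi$ with $\alpha=\mu_{k_0}/\xi_{k_0}$, and so $G=\alpha\,\xi\xi^t=\alpha\lambda_x\lambda_x^t$, which is the claim.

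There is no real obstacle here. The only point needing care is the index convention: in $\mathscr T=W^*\otimes\Lambda^2W^*$ the pair $jk$ is stored only with $j<k$, so one checks that the relation for $j>k$ follows by antisymmetry and that the relation for $j=k$ is vacuous. Beyond that, the argument is just the rank-one characterization of a matrix whose rows are all proportional to a fixed nonzero vector, combined with symmetry.
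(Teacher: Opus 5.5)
Your proposal is correct and follows essentially the same route as the paper: $\mathsf S_{\lambda_x}G=0$ forces every row of $G$ to be a multiple of $\lambda_x^t$, so $G=\mu\lambda_x^t$, and symmetry of $G$ then forces $\mu\in\C\lambda_x$. You also spell out the index bookkeeping for $j<k$, $j>k$ and $j=k$, and make the choice of $k_0$ explicit, which the paper leaves implicit.
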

\begin{proof}
$S_{\lambda_x}G=0$  makes each row of $G$ a multiple of $\lambda_x^t$.
Writing $G=v\lambda_x^t$, symmetry implies that
$v$ is a multiple of $\lambda_x$.
\end{proof}

For a polynomial in a complex scalar $t$ and its conjugate,
$[t^a\bar t^b]$ denotes its coefficient of $t^a\bar t^b$.
\begin{proposition}\label{prop:no_independent_character}
If $Q(\xi)=0$ for every $\xi\in W^*$, then there are no linearly independent character vectors $x,y\in W$.
\end{proposition}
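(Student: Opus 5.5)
The plan is to argue by contradiction. Suppose $x,y\in W$ are linearly independent character vectors, and write $P_x=\lambda_x\lambda_x^t$ and $P_y=\lambda_y\lambda_y^t$. By the remark after \eqref{III:eq:A2}, we have $C_{\lambda_x}=P_x$ and $C_{\lambda_y}=P_y$. I will test the identity $Q\equiv0$ first at the character covectors themselves, and then along the complex line $\xi_t=\lambda_x+t\lambda_y$, reading off coefficients $[t^a\bar t^b]$ of the real polynomial $Q(\xi_t)$.

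\textbf{Step 1 (at a single character).} At $\xi=\lambda_x$ we have $C_\xi=P_x$, which has rank one. Hence $\mathsf K(C_\xi,P)=\mathsf K(P_x,P_x)=0$, and $Z_2(\xi,C_\xi)=0$, since both are built from $2\times2$ minors of rank-one matrices; in each term of \eqref{III:eq:Z2} the $2\times2$ minor of $C_\xi$ vanishes. So \eqref{III:eq:A1} reduces to $Q(\lambda_x)=2\|\mathsf S_{\lambda_x}(\mathcal AP_x)\|^2$. Vanishing of $Q(\lambda_x)$ together with Lemma~\ref{lem:SG-zero} gives
\[
\mathcal AP_x=\alpha_xP_x \quad\text{and}\quad \mathcal AP_y=\alpha_yP_y
\]
for some scalars $\alpha_x,\alpha_y$. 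By \eqref{III:eq:A2}, $D_cP_x=\mathsf S_{\lambda_x}(P_x)=0$. Using $K^*=-K$ exactly as in Proposition~\ref{prop:annihilator}, this yields
\[
\operatorname{Re}\alpha_x\,\|P_x\|^2=\tfrac14\|C^*P_x\|^2\ge0 ,
\]
and similarly for $y$. This records the real part of the diagonal action of $\mathcal A$.

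\textbf{Step 2 (expansion along $\xi_t$).} By linearity of $C$, $C_{\xi_t}=P_x+tP_y$. Also $P=\xi_t\xi_t^t=P_x+tS+t^2P_y$, where $S=\lambda_x\lambda_y^t+\lambda_y\lambda_x^t$. Both $\mathcal AP_x$ and $\mathcal AP_y$ are known from Step 1, so the only unknown input is $\mathcal AS$. The plan is to compute the lowest-order mixed coefficient, $[t\bar t]$ or, if that degenerates, $[t^2\bar t^2]$, of each of the three terms in \eqref{III:eq:A1}.
\begin{itemize}
\item The $Z_2$ term: its $2\times2$ minors of $C_{\xi_t}$ are all proportional to $t$ times minors of $P_x+P_y$, so it contributes only from $|t|^2$ onward, with an explicit negative coefficient in terms of $\lambda_x\wedge\lambda_y$.
\item The $\mathsf S$ term: by Step 1, its contribution at order $t$ is $\mathsf S_{\lambda_x}(\mathcal AS)+\alpha_x\mathsf S_{\lambda_y}(P_x)$.
\item The $\mathsf K$ cross term: it pairs $\mathsf K(P_x,S)$ against $\mathsf K(P_x,\mathcal AS)$, and these are of the same shape.
\end{itemize}
I would then control $\operatorname{Re}\langle \mathcal AS,S\rangle$ via $\operatorname{Re}\langle \mathcal AS,S\rangle=\|D_cS\|^2+\tfrac14\|C^*S\|^2$, again using skewness of $K$. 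The polarized identity in \eqref{III:eq:A2} gives $D_cS=\mathsf S_{\lambda_x}(P_y)+\mathsf S_{\lambda_y}(P_x)$, which is explicit and nonzero when $\lambda_x,\lambda_y$ are independent.

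\textbf{Main obstacle and how to close.} The hard step is showing that the chosen coefficient is strictly positive. The unknown skew part $K$ enters $\mathcal AS$, so positivity cannot come from a single norm; I expect the argument to need a completed square. My first attempt would be to decompose $\mathcal AS$ along $\Span\{P_x,P_y,S\}$ and its orthogonal complement. I would then check whether the $\mathsf S$-norm term and the $\mathsf K$-pairing combine into $\|\cdot\|^2$ plus $\operatorname{Re}\langle\mathcal AS,S\rangle$ times the positive quantity $\|\mathsf K(P_x,S)\|^2$. The aim is to show this dominates the $Z_2$ contribution, which should reduce to a comparison like the inequality $\|B\|^4>\|C_2(B)\|^2$ used in Proposition~\ref{prop:annihilator}, applied now to the $2\times2$ block spanned by $\lambda_x,\lambda_y$.

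If the $[t\bar t]$ coefficient degenerates, I would instead use both $\xi_t$ and $\lambda_y+t\lambda_x$, and sum the two coefficients so that the cross terms involving $K$ cancel by skewness. Either route produces a strictly positive coefficient, contradicting $Q\equiv0$.
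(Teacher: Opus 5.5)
\textbf{There is a genuine gap: the positivity step, which carries the whole proposition, rests on wrong term identifications and is missing the ingredients that actually close the argument.} Your Step~1 and your choice of the $[t\bar t]$ coefficient of $Q(\lambda_x+t\lambda_y)$ match the paper. Two of your computations, however, are wrong.

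First, $Z_2(\xi_t,C_{\xi_t})\equiv0$. Each entry is a $3\times3$ minor of the matrix whose rows are $\xi_t$ and rows $i,j$ of $C_{\xi_t}=P_x+tP_y$. All of these rows lie in $\Span\{\lambda_x,\lambda_y\}$. So there is no negative term to dominate, and the comparison with $\|B\|^4>\|C_2(B)\|^2$ is beside the point.

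Second, $\mathsf K(P_x,S)=0$, since $C_2(P_x+tS)=-t^2(\lambda_x\wedge\lambda_y)(\lambda_x\wedge\lambda_y)^t$ has no linear term. So your ``positive quantity $\|\mathsf K(P_x,S)\|^2$'' vanishes. The $\mathsf K$-contribution at order $t\bar t$ comes instead from the $t$-part of $C_{\xi_t}$. In coordinates $x=\sqrt a\,e_1$, $y=ue_1+ve_2$, $\delta=av^2$, one pairs $\mathsf K(P_y,P_x)=\delta E$ against $\mathsf K(P_x,\mathcal AS)+\alpha_x\delta E$. This gives $2a\delta\operatorname{Re}h_{22}+2\delta^2\operatorname{Re}\alpha_x$ with $h_{22}=(\mathcal AS)_{22}$.

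This $h_{22}$ is an off-diagonal coefficient of $\mathcal A=\mathsf R_c+K$, so $K$ enters it and it has no sign. Your bound on $\operatorname{Re}\langle\mathcal AS,S\rangle$ does not touch it. Summing the two orderings does not cancel the $K$-terms either: skewness only kills $\operatorname{Re}\langle KS,S\rangle$, whereas here $KS$ is paired with $E_{22}$ and its $y$-adapted analogue, which are not real multiples of $S$.

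What closes the argument is as follows. Complete the square of $2a\delta\operatorname{Re}h_{22}$ against the term $2a|h_{22}|^2$ inside $2\|\mathsf S_{\lambda_x}(\mathcal AS-\alpha_xS)\|^2$. The leftover is $\delta^2(2\operatorname{Re}\alpha_x-\tfrac a2)$. So you need the sharp bound $\operatorname{Re}\alpha_x\ge a/4$, not merely $\operatorname{Re}\alpha_x\ge0$. It follows from your Step~1 identity together with Cauchy--Schwarz applied to $\langle C^*P_x,\lambda_x\rangle=\langle P_x,C_{\lambda_x}\rangle=\|P_x\|^2=a^2$.

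Even so, this only gives $[t\bar t]Q\ge0$, so your claim of a ``strictly positive coefficient'' is unjustified. The contradiction lives in the equality case:
\begin{itemize}
\item Equality forces $C^*P_x=a\lambda_x$ and $h_{22}=-\delta/2$.
\item Swapping $x$ and $y$ gives $C^*P_y=\|y\|^2\lambda_y$.
\item Pairing these two relations with $\lambda_y$ and $\lambda_x$, and using Cauchy--Schwarz with independence, gives $\langle\lambda_x,\lambda_y\rangle=0$.
\item Then $D_cP_y=0$ yields $\mathsf R_cP_y=\tfrac{\|y\|^2}{4}P_y$. Hence $KP_y=\alpha_yP_y-\mathsf R_cP_y\in\C P_y$, and by skewness $K$ preserves $P_y^\perp\ni S$.
\item Also $\langle\mathsf R_cS,P_y\rangle=\tfrac{\|y\|^2}{4}\langle S,P_y\rangle=0$. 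Together these give $h_{22}=v^{-2}\langle\mathcal AS,P_y\rangle=0$, contradicting $h_{22}=-\delta/2$.
\end{itemize}
The mechanism that controls the unknown $K$ is the eigen-relation $\mathcal AP_y=\alpha_yP_y$ from Step~1, used in this equality case, not a cancellation between the two orderings.
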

\begin{proof}
Suppose $x,y\in W$ are two linearly independent character vectors.
Choose unitary coordinates so that
\[
 x=\sqrt a\,e_1,\quad y=u e_1+v e_2,\quad
 a>0,\ v>0,\quad \delta=av^2>0,
\]
and let $P_0:=\lambda_x\lambda_x^t=C_{\lambda_x}$, $P_1:=\lambda_x\lambda_y^t+\lambda_y\lambda_x^t$, $P_2:=\lambda_y\lambda_y^t=C_{\lambda_y}$.
Let $E$ be the matrix on $\Lambda^2W^*$ with the only nonzero entry
$E_{12,12}=1$. A straightforward computation yields
\begin{equation}
 \mathsf K(P_0,P_2)=\delta E,\quad
 \mathsf K(P_1,P_1)=-2\delta E,\quad
 \mathsf K(P_0,P_1)=\mathsf K(P_1,P_2)=0,                \label{III:eq:A4}
\end{equation}
and also
\begin{equation}
    \mathsf K(P_0,P_0)=\mathsf K(P_2,P_2)=0, Z_2(\lambda_x,C_{\lambda_x})=Z_2(\lambda_y,C_{\lambda_y})=0, \mathsf S_{\lambda_y}(P_0)=-\mathsf S_{\lambda_x}(P_1).\label{III:eq:A4-1}
\end{equation}
Then from \eqref{III:eq:A1}, $Q(\lambda_x)=0$ reduces to
$\mathsf S_{\lambda_x}(\mathcal AP_0)=0$, so $\mathcal AP_0=\alpha P_0$ by Lemma \ref{lem:SG-zero}.
Similarly $\mathcal AP_2=\beta P_2$.

Set \[H_1=\mathcal AP_1, \quad h_{ij}=(H_1)_{ij}, \quad P=(\lambda_x+t\lambda_y)(\lambda_x+t\lambda_y)^t=P_0+tP_1+t^2P_2, \quad G=H_1-\alpha P_1.\]  A computation yields $\mathsf S_{\lambda_y}(P_0)=-\mathsf S_{\lambda_x}(P_1)$, then the coefficient of $t$ in
$\mathsf S_{\lambda_x+t\lambda_y}(\mathcal AP)$ is 
\begin{equation}
    S_{\lambda_y}(\mathcal{A}P_0)+S_{\lambda_x}(\mathcal{A}P_1)=\alpha S_{\lambda_y}(P_0)+S_{\lambda_x}(\mathcal{A}P_1)= S_{\lambda_x}(\mathcal{A}P_1-\alpha P_1)=\mathsf S_{\lambda_x}G. \label{equ:coeffient-in-S}
\end{equation}
The constant coefficients of $\mathsf K(C_{\lambda_x+t\lambda_y},P), \mathsf K(C_{\lambda_x+t\lambda_y},\mathcal{A}P)$ in \eqref{III:eq:A1} are $\mathsf K(P_0,P_0)=0$ and $\mathsf K(P_0,\mathcal{A}P_0)=\mathsf K(P_0,\alpha P_0)=0$, respectively.
Their coefficients of $t$ are respectively
$\mathsf K(P_2,P_0)+\mathsf K(P_0,P_1)=\delta E$ and $\mathsf K(P_0,H_1)+\alpha\delta E$.
Moreover \[Z_2(\lambda_x+t\lambda_y,C_{\lambda_x+t\lambda_y})=0,\] since all relevant columns lie in
$\operatorname{span}\{\lambda_x,\lambda_y\}$.
Thus
\begin{align*}
[t\bar t]\,
2\operatorname{Re}\langle\mathsf K(C_{\lambda_x+t\lambda_y},P),\mathsf K(C_{\lambda_x+t\lambda_y}, \mathcal{A}P)\rangle
&=
\langle \delta E,\mathsf K(P_0,H_1)+\alpha\delta E\rangle
+
\langle \mathsf K(P_0,H_1)+\alpha\delta E,\delta E\rangle.
\\&=
2\delta\operatorname{Re}\langle E,\mathsf K(P_0,H_1)\rangle
+
2\delta^2\operatorname{Re}\alpha,
\end{align*}
as $\langle E,E\rangle=1$.
Because \(E\) has its only nonzero entry at \((12,12)\),
\[
\langle E,\mathsf K(P_0,H_1)\rangle
=
\overline{\mathsf K(P_0,H_1)_{12,12}}=a\overline{h_{22}}.
\]
We therefore obtain
\begin{equation}
 [t\bar t]Q(\lambda_x+t\lambda_y)
 =2\|\mathsf S_{\lambda_x}G\|^2+2a\delta\operatorname{Re}h_{22}
                  +2\delta^2\operatorname{Re}\alpha.     \label{III:eq:A5}
\end{equation}
For a symmetric matrix $J$, define explicitly
\[
 \mathcal E(J)=
 \sum_{\substack{1\leq i\leq m,\ 2\leq k\leq m\\
                    (i,k)\ne(1,2),(2,2)}}|J_{ik}|^2\geq0.
\]
Since $x=\sqrt{a}e_1$, a straightforward computation yields $\|\mathsf S_{\lambda_x}J\|^2=a\sum_{i=1}^m\sum_{k=2}^m|J_{ik}|^2$.
As $G_{12}=h_{12}-\alpha\sqrt\delta,
G_{22}=h_{22}$, \eqref{III:eq:A5} is exactly
\begin{align}
 [t\bar t]Q(\lambda_x+t\lambda_y)
 ={}&2a|h_{12}-\alpha\sqrt\delta|^2
    +2a|h_{22}+\delta/2|^2+2a\mathcal E(G)\notag\\
   &+\delta^2(2\operatorname{Re}\alpha-\frac a 2),              \label{III:eq:A6}
\end{align}
which is vanishing. As $x,y$ are character vectors, by \eqref{III:eq:A2}, $D_cP_0=D_cP_2=0$. 
Since $\mathcal A P_0=\alpha P_0$, taking inner products with $P_0$ yields
\[
\alpha\|P_0\|^2=\langle \mathcal A P_0,P_0\rangle
=\|D_cP_0\|^2+\frac14\|C^*P_0\|^2+\langle KP_0,P_0\rangle.
\]
The term $\langle KP_0,P_0\rangle$ is purely imaginary, so
\[
(\operatorname{Re}\alpha)\|P_0\|^2
=\|D_cP_0\|^2+\frac14\|C^*P_0\|^2.
\]
Note that $\|P_0\|^2=a^2$,
and $D_cP_0=0$. We have
\[
 \operatorname{Re}\alpha=\frac{\|C^*P_0\|^2}{4a^2}
 .\]
Also, since 
 \[
 \quad \langle C^*P_0,\lambda_x\rangle=\langle P_0, C_{\lambda_x}\rangle=\langle P_0,P_0\rangle = a^2.
\]
Cauchy--Schwarz implies that $\operatorname{Re}\alpha \geq \frac{a}{4}$. Vanishing of \eqref{III:eq:A6} forces equality in Cauchy--Schwarz and hence
\begin{equation}\label{equ:CP0}
    C^*P_0=a\lambda_x.
\end{equation}
 Interchanging $x,y$ and writing $b=\|y\|^2$ yields
$C^*P_2=b\lambda_y$. Moreover, we have
\[a\langle \lambda_x,\lambda_y\rangle=\langle C^*P_0, \lambda_y \rangle=\langle P_0,P_2  \rangle=\langle \lambda_x,\lambda_y\rangle^2,\]
 and similarly
\[b\langle\lambda_x,\lambda_y\rangle=\langle \lambda_x,\lambda_y\rangle^2. \]
A nonzero inner product would force $a=b=\langle \lambda_x,\lambda_y\rangle$ and 
$x,y$ to be linearly dependent by Cauchy--Schwarz. Therefore \[\langle \lambda_x,\lambda_y\rangle=0, u=0.\]
Since $D_cP_2=0, C^*P_2=b\lambda_y$,  \[RP_2=(D_c^*D_c+\tfrac14CC^*)P_2=\frac{b}{4}C(\lambda_y)=\frac{b}{4}P_2,\] 
while $\mathcal AP_2=\beta P_2$; hence $KP_2=\mathcal AP_2-RP_2$ is
proportional to $P_2$ and $K$ preserves $P_2^\perp$, as $K$ is skew-Hermitian. As $u=0$ and $y=ve_2$, $\langle P_1, P_2\rangle=0$, and hence
$\langle KP_1,P_2\rangle=0$.
Also, note that $\langle D^*_cD_cP_1,P_2\rangle=\langle D_cP_1,D_cP_2 \rangle=0$, we have
\[
 \langle RP_1,P_2\rangle=\frac{1}{4}\langle CC^*P_1,P_2 \rangle
 =\tfrac14\langle C^*P_1,b\lambda_y\rangle
 =\tfrac b4\langle P_1,P_2\rangle=0.
\]
 It follows that \[\langle H_1, P_2\rangle=\langle \mathcal{A}P_1, P_2\rangle=\langle RP_1+KP_1, P_2\rangle=0.\] Again, as $u=0$ and $y=ve_2$, this implies $h_{22}=0$, contradicting the
requirement $h_{22}=-\delta/2$ in \eqref{III:eq:A6}.
\end{proof}

A \textit{derivation} at the character $\lambda_x$ is a complex linear functional
$\lambda_y$ satisfying \[\lambda_y(c(v,w))=\lambda_x(v)\lambda_y(w)+\lambda_y(v)\lambda_x(w),\] in term of matrices, this is equivalent to
\[C_{\lambda_y}=\lambda_x\lambda_y^t+\lambda_y\lambda_x^t.\] 
\begin{proposition}\label{prop:no_independent_derivation}
If $Q(\xi)=0$ for every $\xi\in W^*$, then there are no linearly independent vectors $x,y\in W$ such that $\lambda_x$ is a character and $\lambda_y$ is either a derivation at $\lambda_x$, or satisfies $C_{\lambda_y}=0$.
\end{proposition}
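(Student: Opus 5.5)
I will follow the template of Proposition~\ref{prop:no_independent_character}: restrict $Q$ to the pencil $\xi=\lambda_x+t\lambda_y$ and extract the coefficient $[t\bar t]$. The key simplification is that both alternatives for $\lambda_y$ make $C$ affine in $t$. In the derivation case
\[
C_{\lambda_x+t\lambda_y}=P_0+tP_1,
\]
and in the case $C_{\lambda_y}=0$ it is just $P_0$. Here $P_0=\lambda_x\lambda_x^t$, $P_1=\lambda_x\lambda_y^t+\lambda_y\lambda_x^t$ and $P_2=\lambda_y\lambda_y^t$ as before, so that $P=P_0+tP_1+t^2P_2$.

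First I normalize coordinates as before: $x=\sqrt a\,e_1$, $y=ue_1+ve_2$ with $a,v>0$ and $\delta=av^2$. Since $x$ is a character vector, $C_{\lambda_x}=P_0$, and \eqref{III:eq:A2} gives $D_cP_0=0$. Evaluating at $t=0$ makes the $\mathsf K$ and $Z_2$ terms vanish, so $Q(\lambda_x)=2\|\mathsf S_{\lambda_x}(\mathcal AP_0)\|^2$. Lemma~\ref{lem:SG-zero} then gives $\mathcal AP_0=\alpha P_0$. As in the previous proof,
\[
\operatorname{Re}\alpha=\frac{\|C^*P_0\|^2}{4a^2}\ge \frac a4,
\]
using $\langle C^*P_0,\lambda_x\rangle=a^2$.

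Next I compute the $[t\bar t]$ coefficient term by term.
\begin{itemize}
\item \emph{The $\mathsf S$-term.} Its $t$-coefficient is $\mathsf S_{\lambda_y}(\alpha P_0)+\mathsf S_{\lambda_x}(\mathcal AP_1)=\mathsf S_{\lambda_x}G$ with $G=\mathcal AP_1-\alpha P_1$, exactly as in \eqref{equ:coeffient-in-S}. This contributes $2\|\mathsf S_{\lambda_x}G\|^2\ge0$.
\item \emph{The cross term.} The $t$-coefficient of $\mathsf K(C_\xi,P)$ is $\mathsf K(P_0,P_1)+\mathsf K(C_{\lambda_y},P_0)$. In both cases this vanishes, by \eqref{III:eq:A4} and since $\mathsf K(P_1,P_0)=0$. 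The constant coefficients vanish as well. Hence the cross term contributes nothing to $[t\bar t]$.
\item \emph{The $Z_2$-term.} Every column of $C_\xi$ and $\xi$ lies in $\operatorname{span}\{\lambda_x,\lambda_y\}$, so the $3\times3$-type expressions in \eqref{III:eq:Z2} vanish identically.
\end{itemize}
So $[t\bar t]Q=2\|\mathsf S_{\lambda_x}G\|^2$, which vanishes. Lemma~\ref{lem:SG-zero} then gives $\mathcal AP_1=\alpha P_1+\gamma P_0$.

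This first-order information is not yet contradictory, so I will go to the coefficient $[t^2\bar t^2]$. I expect this to be the main obstacle. In that coefficient the cross term does contribute. In the derivation case it contributes
\[
2\operatorname{Re}\langle \mathsf K(P_1,P_1)+\mathsf K(P_0,P_2),\ \mathsf K(P_1,\mathcal AP_1)+\mathsf K(P_0,\mathcal AP_2)\rangle.
\]
Since $\mathsf K(P_1,P_1)+\mathsf K(P_0,P_2)=-\delta E$ and $\mathsf K(P_1,\mathcal AP_1)=\alpha\mathsf K(P_1,P_1)$, this reduces to a combination of $\operatorname{Re}\alpha\,\delta^2$ and $\delta\operatorname{Re}(\mathcal AP_2)_{22}$. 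In the case $C_{\lambda_y}=0$ it contributes $\delta\,\mathsf K(P_0,\mathcal AP_2)$, which involves only $a(\mathcal AP_2)_{22}$. The $\mathsf S$-part contributes $2\|\mathsf S_{\lambda_x}(\mathcal AP_2)+\mathsf S_{\lambda_y}(\mathcal AP_1)\|^2$.

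I will then complete the square in the $(2,2)$-entry, as was done in \eqref{III:eq:A6}. Together with $\operatorname{Re}\alpha\ge a/4$, this should leave a strictly positive constant multiple of $\delta^2$, or force an entry condition such as $(\mathcal AP_2)_{22}=-\delta/2$ in the second case.

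To rule out that remaining condition, I will compute $\operatorname{Re}\langle \mathcal AP_2,P_2\rangle=\|D_cP_2\|^2+\tfrac14\|C^*P_2\|^2\ge0$, using skew-Hermitianity of $K$. I also use $D_cP_2=\mathsf S_{\lambda_y}(C_{\lambda_y})$ from \eqref{III:eq:A2}, which is $0$ when $C_{\lambda_y}=0$; if needed I will first reduce to $u=0$ by an argument parallel to the previous proof. This positivity contradicts the negative value forced by vanishing, so $Q\not\equiv0$ on the pencil, proving the proposition.
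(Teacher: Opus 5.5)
Your $[t\bar t]$ analysis agrees with the paper and correctly yields $\mathcal AP_1=\alpha P_1+\gamma P_0$. The gap is in the second-order step.

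\textbf{The derivation case is not closed.} If you complete the square in the $(2,2)$-entry, you get exactly \eqref{III:eq:A9}, with $\sigma=-1$ in the derivation case and $\sigma=1$ when $C_{\lambda_y}=0$. In \emph{both} cases the leftover constant is $\delta^2(2\operatorname{Re}\alpha-\tfrac a2)$. With only $\operatorname{Re}\alpha\ge a/4$ this is merely nonnegative. So the asserted ``strictly positive constant multiple of $\delta^2$'' is not available in either case. You must rule out the equality configuration $\operatorname{Re}\alpha=a/4$, $C^*P_0=a\lambda_x$, $(\mathcal AP_2)_{22}=\alpha v^2-\sigma\delta/2$.

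For $C_{\lambda_y}=0$ your fallback works. The relation $a\langle\lambda_x,\lambda_y\rangle=\langle C^*P_0,\lambda_y\rangle=\langle P_0,C_{\lambda_y}\rangle=0$ gives $u=0$. Then the forced value $\operatorname{Re}(\mathcal AP_2)_{22}=-\delta/4$ contradicts $v^2\operatorname{Re}(\mathcal AP_2)_{22}=\langle \mathsf R_cP_2,P_2\rangle\ge0$.

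In the derivation case, however, the forced value is $\operatorname{Re}(\mathcal AP_2)_{22}=\tfrac a4v^2+\tfrac\delta2=\tfrac{3\delta}4>0$. Nonnegativity of $\langle\mathsf R_cP_2,P_2\rangle$ therefore gives no contradiction, and your argument as written leaves this case open.

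\textbf{The missing idea.} The paper uses the first-order relation $\mathcal AP_1=\alpha P_1+\gamma P_0$ to sharpen the bound on $\alpha$, instead of the zeroth-order bound from $P_0$. Pair this relation with $J=\sqrt a\,v(E_{12}+E_{21})$, the component of $P_1$ orthogonal to $P_0$. Using $\mathcal AP_0=\alpha P_0$ and $K^*=-K$, this gives $\operatorname{Re}\alpha=\langle\mathsf R_cJ,J\rangle/(2\delta)$.
\begin{itemize}
\item In the derivation case, $\langle C^*J,\lambda_x\rangle=0$ and $\langle C^*J,\lambda_y\rangle=\langle J,P_1\rangle=2\delta$, so $\|C^*J\|^2\ge4a\delta$.
\item When $C_{\lambda_y}=0$, one has $D_cJ=\mathsf S_{\lambda_y}(P_0)$ with $\|D_cJ\|^2=a\delta$.
\end{itemize}
Either way $\operatorname{Re}\alpha\ge a/2$, so the residual is at least $a\delta^2/2>0$ in both cases. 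This needs no reduction to $u=0$ and no information about $\mathcal AP_2$.

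\textbf{An alternative repair within your route.} You can instead make your positivity argument quantitative. In the derivation case, equality also forces $u=0$, because $\langle P_0,C_{\lambda_y}\rangle=\langle P_0,P_1\rangle=2a\langle\lambda_x,\lambda_y\rangle$ must equal $a\langle\lambda_x,\lambda_y\rangle$. Then $D_cP_2=\mathsf S_{\lambda_y}(P_1)$ has $\|D_cP_2\|^2=av^4$. Hence $\operatorname{Re}(\mathcal AP_2)_{22}\ge\delta>\tfrac{3\delta}4$, which is the needed contradiction.
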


\begin{proof}
Suppose that such linearly independent $x,y$ exist.  Retain the notation in Proposition \ref{prop:no_independent_character} unless otherwise specified. Now we have
\[
 C_{\lambda_x}=P_0,\quad C_{\lambda_y}=P_1\ \hbox{or}\ C_{\lambda_y}=0.
\]
Again, $Q(\lambda_x)=0$ yields $\mathcal AP_0=\alpha P_0$. Also, $P_2=\lambda_y\lambda_y^t$ (not necessarily equals $C_{\lambda_y}$ as $y$ is not assumed to be a character vector). Similar as in Proposition \ref{prop:no_independent_character}, the constant coefficients of $\mathsf K(C_{\lambda_x+t\lambda_y},P), \mathsf K(C_{\lambda_x+t\lambda_y},\mathcal{A}P)$ in \eqref{III:eq:A1} are $\mathsf K(P_0,P_0)=0$ and $\mathsf K(P_0,\mathcal{A}P_0)=\mathsf K(P_0,\alpha P_0)=0$, respectively.
The coefficient of $t$ in $\mathsf K(C_{\lambda_x+t\lambda_y},P)$ is 
\[\mathsf K(C_{\lambda_y},P_0)+\mathsf K(P_0,P_1)=0,
\] from \eqref{III:eq:A4}, no matter whether $C_{\lambda_y}=P_1\ \hbox{or}\ C_{\lambda_y}=0$.
Then we know the term $2\operatorname{Re}\langle\mathsf K(C_{\lambda_x+t\lambda_y},P), \mathsf K(C_{\lambda_x+t\lambda_y},\mathcal{A}P)\rangle$ in \eqref{III:eq:A1} has no $t\bar t$ coefficient.
Similar as in Proposition \ref{prop:no_independent_character}, $Z_2$-term vanishes identically, then from \eqref{equ:coeffient-in-S},
\[
 [t\bar t]Q(\lambda_x+t\lambda_y)=2\|\mathsf S_{\lambda_x}(\mathcal AP_1-\alpha P_1)\|^2.
\]
Its vanishing implies
\begin{equation}
 \mathcal AP_1=\alpha P_1+\eta P_0                     \label{III:eq:A7}
\end{equation}
for some $\eta\in \mathbb C$, by Lemma \ref{lem:SG-zero}. Let \[H_2=\mathcal AP_2, \quad h_{ij}=(H_2)_{ij}, \quad
G=H_2-\eta P_1-\alpha P_2,\] and let $\sigma=-1$ when $C_{\lambda_y}=P_1$,
$\sigma=1$ when $C_{\lambda_y}=0$.

A straightforward computation yields
$
\mathsf S_{\lambda_y}(P_0)=-\mathsf S_{\lambda_x}(P_1),
\mathsf S_{\lambda_y}(P_1)=-\mathsf S_{\lambda_x}(P_2).
$
Hence the coefficient of $t^2$ in the factor
$\mathsf S_{\lambda_x+t\lambda_y}(\mathcal AP)$ is
\[
\mathsf S_{\lambda_x}(H_2)+\mathsf S_{\lambda_y}(\mathcal AP_1)
=
\mathsf S_{\lambda_x}(H_2-\eta P_1-\alpha P_2)=S_{\lambda_x}G.
\]
By \eqref{III:eq:A4},
\[
\mathsf K(C_{\lambda_x+t\lambda_y},P)=\sigma\delta t^2E,
\]
and the coefficient of \(t^2\) in $\mathsf K(C_{\lambda_x+t\lambda_y}, \mathcal A P)$ is
\[
\mathsf K(C_{\lambda_x}, \mathcal{A}P_2)+\mathsf K(C_{\lambda_y},\mathcal AP_1 )=\mathsf K(P_0,H_2)+(\sigma-1)\alpha\delta E.
\]
Because \(E\) has its only nonzero entry at \((12,12)\),
\[
\langle E,\mathsf K(P_0,H_2)\rangle
=
\overline{\mathsf K(P_0,H_2)_{12,12}}=a\overline{h_{22}}.
\]
Also note that $\langle E,E\rangle=1$, we have
\begin{align}
 [t^2\bar t^2]Q(\lambda_x+t\lambda_y)
 =2\|\mathsf S_{\lambda_x}G\|^2+2\sigma a\delta\operatorname{Re}h_{22}
             +2(1-\sigma)\delta^2\operatorname{Re}\alpha.
                                \label{III:eq:A8}
\end{align}
As
$
G_{12}=h_{12}-\alpha uv-\eta\sqrt a\,v,
G_{22}=h_{22}-\alpha v^2, \delta=av^2,
$
we have
\begin{align}
 [t^2\bar t^2]Q(\lambda_x+t\lambda_y)
 ={}&2a|h_{12}-\alpha uv-\eta\sqrt a\,v|^2\notag\\
 &+2a|h_{22}-\alpha v^2+\frac{\sigma\delta}{2}|^2+2a\mathcal E(G)\notag\\
 &+\delta^2(2\operatorname{Re}\alpha-\frac{a}{2}).                  \label{III:eq:A9}
\end{align}
Let $J$ be the orthogonal projection of $P_1$ to $P^\perp_0$.
Then $J=\sqrt a\,v(E_{12}+E_{21})$, $\|J\|^2=2\delta$, and pairing with $J$ on both sides of \eqref{III:eq:A7}
yields
\[
\langle \mathcal A P_1,J\rangle=\alpha\langle P_1,J\rangle=\alpha\|J\|^2.
\]
Also, note that $\mathcal A P_0=\alpha P_0$ and $\langle P_0, J\rangle=0$, we have \[\langle \mathcal A P_1,J\rangle=\langle\mathcal{A}J ,J\rangle.\]
As $K$ is skew-Hermitian, we then obtain
\[
\operatorname{Re}\langle \mathcal A P_1,J\rangle=\langle R
J,J\rangle.
\]
Hence
\[
 \operatorname{Re}\alpha=\frac{\langle RJ,J\rangle}{2\delta} = \frac{\|C^*J\|^2}{8\delta}+\frac{\|D_cJ\|^2}{2\delta}.
\]
When $\lambda_y$ is a derivation at $\lambda_x$, \[\langle C^*J,\lambda_x \rangle=\langle J,P_0\rangle=0, \langle C^*J,\lambda_y \rangle=\langle J,P_1\rangle=\|J\|^2=2\delta.\]
Then, taking the component $v e_2$ of $y$ perpendicular
to $x$, we have \[\langle C^*J,\lambda_{ve_2}\rangle=2\delta.\] 
Applying Cauchy--Schwarz yields \[\|C^*J\|^2\geq4a\delta.\]
When $C_{\lambda_y}=0$, \eqref{III:eq:A2} instead yields $D_c(P_0)=0, D_c(P_1)=S_{\lambda_y}(P_0)$, thus
$D_cJ=\mathsf S_{\lambda_y}(P_0)$. 
A direct calculation yields $\|D_cJ\|^2=\|\mathsf S_{\lambda_y}(P_0)\|^2=a\delta$.
Both cases imply $\operatorname{Re}\alpha\geq \frac{a}{2}$.
The last term of \eqref{III:eq:A9} is therefore at least $\frac{a\delta^2}{2}>0$, a contradiction.
\end{proof}

We include the finite-dimensional algebra facts needed here. Let $B$ be a finite-dimensional commutative unital complex algebra,
and let $L_x:B\to B$, $L_x(v)=xv$, denote the multiplication operators.
\begin{lemma}\label{lem:simultaneous-triangular}
The family $\{L_x\}_{x\in B}$ is simultaneously triangularizable: there exists a basis of $B$ in which every $L_x$ is upper triangular.
\end{lemma}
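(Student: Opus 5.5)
The plan is the standard argument for commuting operators over $\C$, by induction on $\dim B$. The key input is that $B$ is commutative. Hence all the operators $L_x$ commute with one another: $L_xL_y(v)=xyv=yxv=L_yL_x(v)$ for all $x,y,v\in B$. It also suffices to triangularize a finite spanning family. Since $x\mapsto L_x$ is linear, it is enough to work with $L_{x_1},\dots,L_{x_k}$ for a basis $x_1,\dots,x_k$ of $B$. Upper triangularity in a fixed basis is preserved under linear combinations.

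First I will show that a commuting family of operators on a nonzero finite-dimensional complex space has a common eigenvector. Pick any eigenvalue $\mu$ of $L_{x_1}$; its eigenspace $V_1=\ker(L_{x_1}-\mu)$ is nonzero, and it is invariant under every $L_{x_j}$ by commutativity. Restrict $L_{x_2}$ to $V_1$ and take an eigenspace there. Iterating over $j=1,\dots,k$ produces a nonzero common eigenspace, and any nonzero $w_1$ in it is a common eigenvector. In the unital setting one could alternatively use the $L_{x_j}$-invariant subspaces directly, but the eigenvalue argument is cleanest and only needs $\C$ to be algebraically closed.

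Next I induct. Let $w_1$ be a common eigenvector. The line $\C w_1$ is invariant under all $L_x$, so each $L_x$ descends to an operator $\overline{L_x}$ on $B/\C w_1$. These descended operators still commute, because descent respects composition. By the inductive hypothesis, which I will phrase for arbitrary commuting families on finite-dimensional spaces rather than only for algebras, there is a basis $\bar w_2,\dots,\bar w_N$ of the quotient in which all $\overline{L_x}$ are upper triangular. Lifting to $w_2,\dots,w_N$ gives a basis $w_1,\dots,w_N$ of $B$. In this basis, $L_xw_j\in\Span(w_1,\dots,w_j)$ for all $x$ and $j$, which is exactly simultaneous upper triangularity.

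There is no real obstacle here. The only point needing care is to state the induction for a general commuting family of operators, since the quotient $B/\C w_1$ need not be an algebra. Unitality is not needed for this lemma itself; it presumably matters only in the later use, for example when identifying the diagonal entries with characters.
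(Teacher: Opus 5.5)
Your proof is correct and follows essentially the same route as the paper: you find a common eigenvector by successively restricting to eigenspaces of commuting operators, then pass to the quotient by that line and lift an inductively obtained triangular basis. One small correction to your closing remark: since $w_1$ is a common eigenvector of every $L_x$, the line $\C w_1$ is an ideal, so $B/\C w_1$ is again a commutative unital algebra whose multiplication operators are the $\overline{L_x}$ (this is how the paper keeps the induction inside the class of algebras), although your more general induction over commuting families works just as well.
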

\begin{proof}
We argue by induction on $n=\dim_{\mathbb C}B$.
If $n=0$, there is nothing to prove. Assume $n\geq 1$ and that the assertion holds for all commutative unital complex algebras of dimension $<n$. If $L_x=\mu_xI$ for some scalar $\mu_x$ for all $x\in B$, then we are done. If not, choose $x_0\in B$ such that $L_{x_0}$
has an eigenvalue $\lambda_0$, with eigenspace
\[
E_0=\ker(L_{x_0}-\lambda_0 I) \subsetneq B.
\]
For every $x\in B$, the operators $L_x$ and $L_{x_0}$ commute, so
$L_x(E_0)\subset E_0$ for every $x\in B$. 
Restrict each $L_x$ to $E_0$:
\[
L_x|_{E_0}:E_0\to E_0.
\]
These restrictions still commute. If \(\dim E_0=1\), then any nonzero
vector in $E_0$ is a common eigenvector. Otherwise, repeat the same
argument on $E_0$, choosing another operator and taking the
eigenspace of its restriction.
Thus we can find $v$ as a common eigenvector of the whole commuting family.
Then $B/\mathbb C v$ is a commutative unital complex algebra of
dimension $n-1$, and the induced operators
\[
\overline{L_x}:B/\mathbb C v\to B/\mathbb C v
\]
are commuting multiplication operators.

By the induction hypothesis, $B/\mathbb C v$ has a basis
$\bar v_2,\dots,\bar v_n$ in which every $\overline{L_x}$ is upper
triangular. Lifting these vectors to $v_2,\dots,v_n\in B$, the basis
$v,v_2,\dots,v_n$ of $B$ puts every $L_x$ in upper triangular
form. This completes the induction.
\end{proof}
From Lemma \ref{lem:simultaneous-triangular}, each $L_x(x\in B)$ is upper triangular by choice of basis. Then
their diagonal entries define finitely many characters
$\lambda_1,\ldots,\lambda_r$ after the repetitions are removed.

\begin{lemma}
   $\lambda_1,\ldots,\lambda_r$ from the diagonal entries include every nonzero character $\lambda$ of $B$.
\end{lemma}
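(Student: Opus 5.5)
Fix a basis $v_1,\ldots,v_n$ of $B$, supplied by Lemma~\ref{lem:simultaneous-triangular}, in which every $L_x$ is upper triangular. Write $\mu_i(x)$ for the $i$-th diagonal entry of $L_x$. The plan has three steps.

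First, each $\mu_i$ is itself a character. The map $x\mapsto L_x$ is an algebra homomorphism, since $L_{xy}=L_xL_y$. The diagonal of a product of upper triangular matrices is the entrywise product of the diagonals, so $\mu_i(xy)=\mu_i(x)\mu_i(y)$. Linearity of $\mu_i$ is clear. Moreover $\mu_i(1)=1$ because $L_1=I$, so each $\mu_i$ is nonzero. After removing repetitions these are the $\lambda_1,\ldots,\lambda_r$.

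Second, let $\lambda$ be an arbitrary nonzero character of $B$. Its kernel $\mathfrak m=\ker\lambda$ is an ideal of codimension one, because $\lambda(xy)=\lambda(x)\lambda(y)$ and $\lambda\ne0$. Also $\lambda(1)=1$, since $\lambda(1)^2=\lambda(1)$ and $\lambda(1)=0$ would force $\lambda(x)=\lambda(x)\lambda(1)=0$ for all $x$. For every $x\in B$, the element $x-\lambda(x)1$ lies in $\mathfrak m$. I would show that $L_{x-\lambda(x)1}=L_x-\lambda(x)I$ is not invertible. If it were invertible, there would be $y\in B$ with $(x-\lambda(x)1)y=1$. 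Applying $\lambda$ gives $0=\lambda(1)=1$, which is impossible. Hence $\lambda(x)$ is an eigenvalue of $L_x$, so $\lambda(x)\in\{\mu_1(x),\ldots,\mu_n(x)\}$ for each $x$.

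Third, I pass from this pointwise statement to the identity $\lambda=\mu_i$ for a single $i$. This is the step I expect to be the only real obstacle, and there are two ways to handle it. The soft way is a covering argument. For each $i$, the set $\{x:\lambda(x)=\mu_i(x)\}$ is a linear subspace of $B$, namely $\ker(\lambda-\mu_i)$. By the second step, $B$ is the union of these finitely many subspaces. A complex vector space is never a finite union of proper subspaces, so one of them is all of $B$, which gives $\lambda=\mu_i$.

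A more algebraic alternative uses the product $\prod_i L_{x_i-\mu_i(x_i)1}$ and triangularity to derive a contradiction if $\lambda$ differs from every $\mu_i$. I would use the union-of-subspaces argument, since it needs nothing beyond the second step.

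Finally, the earlier text applies this to a possibly non-unital algebra $(W,c)$. In that case I would adjoin a unit, taking $B=\C\oplus W$, and extend a nonzero character $\lambda$ of $W$ by setting $\lambda(1)=1$. This extension is again a nonzero character, so the argument above applies to it.
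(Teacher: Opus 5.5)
Your proposal is correct and follows essentially the same route as the paper. In both, applying $\lambda$ to a putative inverse of $x-\lambda(x)1$ shows that $L_x-\lambda(x)I$ is not invertible, so $\lambda(x)$ is a diagonal entry of the triangular $L_x$; the fact that a complex vector space is not a finite union of proper subspaces then forces $\lambda=\mu_i$ for a single $i$. Your explicit checks that $\lambda(1)=1$ and that each $\mu_i$ is a character fill in small points the paper leaves implicit.
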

\begin{proof}
Note that for invertible $L_x$, $L^{-1}_x(1)$ yields the inverse of $x$. Thus  $x$ is invertible if and only if $L_x$ is invertible.   For any character $\lambda:B\to\mathbb C$ and any $h\in B$, the element
$
h-\lambda(h)1
$
cannot be invertible, since applying \(\lambda\) would yield 
\[
\lambda(h-\lambda(h)1)=0.
\]
Thus $L_h-\lambda(h)I$ is not invertible, so $\lambda(h)$ is an
eigenvalue of $L_h$. Then $B$ is the union of the kernels of the finitely many linear
forms $\lambda-\lambda_i$. A complex vector space cannot be a
finite union of proper linear subspaces, so one of these forms is
zero.
\end{proof}

The following proposition is a standard result in commutative algebra, see for example \cite[Theorem 8.7]{AM69}.
\begin{proposition}\label{prop:algebra-decompose}
Let $B$ be a finite-dimensional commutative unital complex algebra.
Then $B\cong \prod_{i=1}^r B_i$, where each $B_i$ is a local algebra
whose character kernel is nilpotent.
\end{proposition}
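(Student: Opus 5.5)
The plan is to use the idempotent decomposition coming from the characters produced above. First, since $B$ is finite-dimensional and commutative, Lemma \ref{lem:simultaneous-triangular} and the lemma that follows it give a finite list of characters $\lambda_1,\ldots,\lambda_r$. These are distinct algebra homomorphisms $B\to\C$, and every character occurs among them.

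Next I will show that every maximal ideal is the kernel of some $\lambda_i$. If $\mathfrak m$ is maximal, then $B/\mathfrak m$ is a finite-dimensional field extension of $\C$, hence equals $\C$, so the quotient map is a character. Hence the maximal ideals are exactly $\mathfrak m_i=\ker\lambda_i$, $i=1,\ldots,r$, and they are pairwise comaximal. Because $B$ is Artinian, the Jacobson radical $\bigcap_i\mathfrak m_i$ is nilpotent. I would prove this directly: an element $h$ with $\lambda_i(h)=0$ for all $i$ has $L_h$ strictly upper triangular in the basis of Lemma \ref{lem:simultaneous-triangular}, since its diagonal entries are the values $\lambda_i(h)$. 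Therefore $L_h^n=0$, and $h^{n}=L_h^{n-1}h\cdot$ gives $h^{n+1}=0$; more precisely $h^n=L_h^n(1)=0$. The same triangular picture shows the product $\prod_i\mathfrak m_i^{k}$ is nilpotent for suitable $k$, and in fact $(\bigcap\mathfrak m_i)^n=0$ by the same argument applied to products.

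Then I apply the Chinese remainder theorem. For large $k$ the ideals $\mathfrak m_i^k$ are still pairwise comaximal and $\prod_i\mathfrak m_i^k\subset(\bigcap_i\mathfrak m_i)^k=0$. This yields $B\cong\prod_i B/\mathfrak m_i^k$. Each $B_i=B/\mathfrak m_i^k$ is local with unique maximal ideal $\mathfrak m_i/\mathfrak m_i^k$, and its character kernel is $\mathfrak m_i/\mathfrak m_i^k$, which is nilpotent. Alternatively, I could produce orthogonal idempotents $e_i$ lifting the CRT decomposition of $B/\mathrm{rad}$, and set $B_i=e_iB$.

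The only mildly delicate step is the nilpotency of the radical. I handle it through the strictly triangular form of $L_h$, checking that the product of $n$ elements of the radical vanishes. Everything else is standard, as in \cite[Theorem 8.7]{AM69}.
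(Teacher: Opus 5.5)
Your proof is correct, but it takes a different route from the paper.

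\textbf{The paper's route.} It picks one element $h$ on which the characters $\lambda_1,\dots,\lambda_r$ take distinct values. It splits $B$ into the generalized eigenspaces of $L_h$ and uses the Chinese remainder theorem for the coprime polynomials $(t-\lambda_i(h))^{m_i}$. This shows the spectral projections are $L_{p_i(h)}$, so the $e_i=p_i(h)$ are orthogonal idempotents summing to $1$. Triangularization on each $e_iB$ then shows that the kernel of its unique character is nilpotent.

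\textbf{Your route.} You work with ideals instead. The $\mathfrak m_i=\ker\lambda_i$ are distinct codimension-one ideals, hence pairwise comaximal. One application of the triangularization lemma on $B$ shows $(\bigcap_i\mathfrak m_i)^n=0$ for $n=\dim B$, because $h_1\cdots h_n=L_{h_1}\cdots L_{h_n}(1)$ with every $L_{h_j}$ strictly upper triangular. The Chinese remainder theorem for the comaximal ideals $\mathfrak m_i^n$, whose product lies in $(\bigcap_i\mathfrak m_i)^n=0$, then gives $B\cong\prod_i B/\mathfrak m_i^n$. Locality and nilpotency of each factor come for free: a maximal ideal containing $\mathfrak m_i^n$ is prime, hence equals $\mathfrak m_i$, and $(\mathfrak m_i/\mathfrak m_i^n)^n=0$.

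\textbf{Comparison.} Your argument is the standard Artinian-ring proof and avoids choosing a separating element. It produces the idempotents only implicitly, as preimages of the unit vectors under the CRT isomorphism. The paper's spectral construction gives them explicitly as polynomials in one element. Either version supplies the idempotent $e$ with $\epsilon(e)=0$ and $\lambda(e)=1$ needed in the proof of Proposition~\ref{III:lem:algebra-obstruction}.

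\textbf{Write-up slips.} The sentence involving $h^n=L_h^{n-1}h$ and $h^{n+1}$ is garbled; the clean statement is the product identity above. You need $\prod_i\mathfrak m_i^k=0$, not merely nilpotent. The argument that $B/\mathfrak m$ is a finite field extension of $\C$ is harmless but not needed for the decomposition.
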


\begin{proof}
Let $\lambda_1, \cdots, \lambda_r$ be the nonzero characters. Choose $h\in B$ outside the finitely many hyperplanes
$\lambda_i(h)=\lambda_j(h)$ for distinct characters, such that $\lambda_1,\dots,\lambda_r$ take distinct values
on $h$. Then the generalized eigenspaces of \(L_h\),
\[
B_i=\ker(L_h-\lambda_i(h)I)^{m_i},
\]
are ideals, and every $L_x$ preserves each $B_i$.  Since the polynomials $(t-\lambda_i(h))^{m_i}$ are pairwise
relatively prime, the Chinese Remainder theorem implies that the
spectral projections are polynomials in $L_h$. Thus the $i$-th spectral projection is $P_i=p_i(L_h)=L_{p_i(h)}$ for some polynomial $p_i$. Hence they are multiplication by
idempotents $e_i=p_i(h)\in B$, with
\[
e_i^2=e_i,\quad e_ie_j=0\ (i\neq j),\quad \sum_i e_i=1.
\]
Thus, we have a decomposition of the algebra $B$ as 
\[
B=\prod_i B_i,\quad B_i=e_iB.
\]
Each $B_i$ has a unique character $\lambda_i$ with $\lambda_i(e_j)=\delta_{ij}$, and its kernel is
$\mathfrak m_i=\ker\lambda_i$. Thus $B_i$ is local. In a simultaneous triangularization,
every element of $\mathfrak m_i$ acts as a strictly upper triangular
operator on $B_i$. Hence products of $\dim B_i$ such operators
vanish. Since multiplication is faithful, $\mathfrak m_i$ is
nilpotent. Therefore each $B_i$ is a local algebra with nilpotent character
kernel.
\end{proof}

Now we are ready for the proof of Proposition \ref{III:lem:algebra-obstruction}.
\begin{proof}[Proof of Proposition \ref{III:lem:algebra-obstruction}]
We first define the unitization $\widetilde{W}=\C1\oplus W$, whose product is
\[
(\alpha1+u)(\beta1+v)
=\alpha\beta1+\alpha v+\beta u+c(u,v).
\]
Its augmentation \[\epsilon:\alpha1+u\mapsto\alpha\] is a character. Every
nonzero character $\lambda$ of $W$ extends uniquely to
\[\alpha1+u\mapsto\alpha+\lambda(u).\] The augmentation
character restricts to zero on $W$, and its other characters correspond
exactly to the nonzero characters of $W$. 
From Proposition \ref{prop:no_independent_character}, $W$ has at most one nonzero character.

If there are none, the augmentation is the only character of the
unitization, then from Proposition \ref{prop:algebra-decompose}, its kernel $W$ is nilpotent. We write $W^r$ for
the span of all $r$-fold products in $W$. Assuming $c\neq0$,
its last nonzero power $W^r$, with $r\geq2$, lies in
\[
\operatorname{Ann}(W)\cap W^2.
\]
For a unit vector $z\in W^r$, write $u=u'+\alpha z,v=v'+\beta z$ with $u',v'\in z^\perp$, then
\[c(u,v)=c(u',v').\]
Since $z\in W^2$, write \begin{equation}\label{equ:expressionW2}
    z=\sum_{i}c(u_i,v_i)=\sum_ic(u'_i,v'_i),
\end{equation} where $u'_i,v'_i$ denote their orthogonal projection to $z^{\perp}$. Recall that $B_z(u,v)=\langle c(u,v),z\rangle$. Then pairing with $z$ on both sides of \eqref{equ:expressionW2}  yields $B_z|_{z^\perp}\neq0$, which contradicts Proposition \ref{prop:annihilator}.

If $W$ has exactly one nonzero character $\lambda$, idempotent decomposition in Proposition \ref{prop:algebra-decompose}
reads $\widetilde{W}=e\widetilde{W}\oplus (1-e)\widetilde{W} $ as an direct product of local algebras where $e$ satisfies  $e^2=e,\epsilon(e)=0, \lambda(e)=1$. Then $e\in W$ and $e\widetilde{W}=eW$ is a unital algebra with identity $e$.
Also $(1-e)\widetilde W=\mathbb{C}(1-e)\oplus N$, where $N=\{v\in W:ev=0\}$ is nilpotent. Thus we have a decomposition \[W=eW\oplus N.\] Products between the two
summands are zero. 

If $\dim eW\geq2$, its character kernel $\mathfrak m$ is nonzero and
$\mathfrak m/\mathfrak m^2\ne0$ by nilpotence. Write the local algebra $eW=\mathbb{C}e\oplus \mathfrak m $.
A nonzero functional on $\mathfrak m/\mathfrak m^2$, extended by zero on $\mathbb Ce$
and $N$, is a derivation $\partial$ at the character $\lambda$.
To check the Leibniz rule, write  elements of
 $W$ as $a=\alpha e+r+n, b=\beta e+s+n'$ with
$r,s\in\mathfrak m, n,n'\in N$. The product is
\[ab=\alpha\beta e+\alpha s+\beta r+rs+nn'.\] The chosen functional
annihilates $e$, $\mathfrak m^2 $ and $N$. Thus
\[\partial(ab)=\alpha \partial (s)+\beta\partial(r),\]
while $\lambda$ annihilates $\mathfrak{m},N$, $\lambda(e)=1$, then
\[\lambda(a)\partial(b)+\partial(a)\lambda(b)=\alpha \partial (s)+\beta\partial(r).\]
Moreover, we can see the derivation $\partial$ at $\lambda$ has coefficient vector linearly independent of the character vector of $\lambda$, which
contradicts Proposition \ref{prop:no_independent_derivation}.

It remains to handle the case $eW=\mathbb Ce$. If $N^2\ne0$, note that $eN=0$, then similar as the proof of the case where $W$ has no nonzero character, the last nonzero power
of $N$ supplies the annihilator vector, which contradicts Proposition \ref{prop:annihilator}.
If $N^2=0$ and $e$ is not orthogonal to $N$, choose a unit vector $z\in N$ with
$\langle e,z\rangle\ne0$; it is an annihilator. Let $e'$ be the projection of $e$ to $z^\perp$, then $c(e',e')=e$ and $B_z(e',e')=\langle e,z\rangle\neq 0$, so again it contradicts Proposition \ref{prop:annihilator}.
Finally suppose $N^2=0$ and $\langle e, N\rangle=0$.
Choose orthonormal coordinates with $e=\|e\|e_1$ and take any nonzero coefficient vector $y$ in the other coordinates so that $\lambda_y(e)=0$. Such a vector exists
because $m\geq2$. Note that $\lambda(e)=1$ and $\lambda(N)=0$, the character $\lambda$ has character vector $x=e_1/\|e\|$, so $C_{\lambda}=\lambda_x\lambda_x^t$. Also, in this case every
product lies in $\mathbb Ce$,  $C_{\lambda_y}=0$ as $\lambda_y(e)=0$.
The final contradiction is the case $C_{\lambda_y}=0$ of Proposition \ref{prop:no_independent_derivation}.
All possible nonzero multiplications are excluded; hence $c=0$.
\end{proof}

\subsubsection{Type III: the local tangent rigidity}\label{sec:type-III-local-rigidity-done}
We inherit the notation from previous sections unless otherwise specified.
\begin{lemma}\label{lem:cubic_at_b_equal_0}
    At $b=0$, the cubic identity \eqref{eq:cubic-common} at $u=0$ yields
$\mathsf S_\xi(\widetilde{T}_0(\xi \xi^t))=0$ for every $\xi\in W^*$, where $T_0$ is the linear isometry $R_0\rightarrow N_0$ and $\widetilde{T}_0=\nu^{-1}_0T_0\rho_0$ is the coordinate operator. 
\end{lemma}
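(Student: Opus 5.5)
At $b=0$ we have $\cH_0=E_0$, and we specialize the exact block computations of Lemma~\ref{lem:coordinate_rho_nu} and the proof of Proposition~\ref{III:prop:exact-Q2} to $\varepsilon=0$. There $\rho_0(H)=\left(\begin{smallmatrix}-H&0\\0&0\end{smallmatrix}\right)$ and $\nu_0(H)=\left(\begin{smallmatrix}0&0\\0&H\end{smallmatrix}\right)$. So $R_0$ is the block of $(0,i),(0,j)$ minor coordinates. The space $N_0$ consists of symmetric matrices supported in the lower $m\times m$ block. Both frames are isometries, so $\widetilde T_0$ is unitary on $\mathscr S$. Since $\dim R_0=d^\#$, Proposition~\ref{prop:cubic-common} applies, and we use \eqref{eq:cubic-common} on the diagonal $z=w$ with $u=0$.

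The first step is to identify the two sides. Put $A(\xi)=J_0(0,\xi)=\left(\begin{smallmatrix}0&\xi^t\\\xi&0\end{smallmatrix}\right)$ and $P=\xi\xi^t$. By Lemma~\ref{lem:quadratic-common} and \eqref{equ: C2action on hankel}, $F_2(\xi)=T_0C_2(A(\xi))=T_0\rho_0(P)=\nu_0(H_0)$, where $H_0=\widetilde T_0P$. For the right-hand side, $C_3(A(\xi))=0$: every $3\times3$ minor of a matrix with vanishing lower block and rank at most two vanishes. This is also the $\varepsilon=0$ case of \eqref{equ:C_3_A}. Hence \eqref{eq:cubic-common} gives $P_{R_0^\perp}\widetilde Q_2(A(\xi),F_2(\xi))=0$.

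The second step is the block computation of $\Omega_0=\widetilde Q_2(A(\xi),\nu_0(H_0))$, taken from the exact formulas in the proof of Proposition~\ref{III:prop:exact-Q2} at $\varepsilon=0$. The $(0i),(0j)$ block is $\varepsilon J_\varepsilon=0$, the lower block is $\varepsilon\mathsf K(C_\xi,H)=0$, and the mixed block is $Y_0=\mathsf S_\xi(H_0)$. One can also check this directly: a $2\times2$ minor of $A+F_2$ on rows $0,i$ and columns $j,k$ has its bilinear part equal to $\xi_j(H_0)_{ik}-\xi_k(H_0)_{ij}$, and all other minors have no bilinear part.

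Since $R_0$ lies entirely in the $(0i),(0j)$ block, the mixed block is orthogonal to $R_0$. Therefore
\[
P_{R_0^\perp}\Omega_0=\begin{pmatrix}0&\mathsf S_\xi(H_0)\\ \mathsf S_\xi(H_0)^t&0\end{pmatrix},
\]
and the vanishing from the first step gives $\mathsf S_\xi(\widetilde T_0(\xi\xi^t))=0$ for every $\xi$.

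The main obstacle is bookkeeping: the signs and index conventions of the mixed block must agree with the definition of $\mathsf S_\xi$. If sign conventions differ, the expression obtained is $\pm\mathsf S_\xi(H_0)$, which does not affect the vanishing.
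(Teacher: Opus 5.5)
Your proposal is correct and follows the paper's proof: you identify $F_2(\xi)=\nu_0(\widetilde T_0P)$, note that $C_3(A(\xi))=0$ because $A(\xi)$ has rank at most two, and show that $\widetilde Q_2(A(\xi),F_2(\xi))$ has only the mixed blocks $\mathsf S_\xi(\widetilde T_0P)$. Since these blocks are orthogonal to $R_0$, the cubic identity forces them to vanish.
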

\begin{proof}
  Let $P=\xi\xi^t$, $A(\xi)=J_0(0,\xi)$. We know $C_3(A(\xi))=0$ as $A(\xi)$ is of rank two. From \eqref{equ:nuspsilonc}, \[ F_2(\xi)=T_{0}C_2(A(\xi))=T_0\rho_{0}(P)=\nu_{0}(\widetilde T_0 P)=\begin{pmatrix}
      0 & 0\\
      0 & \widetilde{T}_0P
  \end{pmatrix}.\]
Also \eqref{equ:rho-eps_c} yields \[R_0=\{\begin{pmatrix}
    H&0\\
    0&0
\end{pmatrix},H\in \mathscr S\},\]
where the expression of the the block matrix corresponds to the splitting of the second-compound indices into $(0,i)$ followed
by $(i,j)$, with all lower indices increasing. Then a  computation yields \begin{align*}
    \widetilde Q_2(A(\xi
),F_2(\xi))&=C_2(A(\xi)+F_2(\xi))-C_2(A(\xi))-C_2(F_2(\xi))\\&=\begin{pmatrix}
    0 &\mathsf S_\xi(\widetilde{T}_0(P))\\
    \mathsf S_\xi(\widetilde{T}_0(P))^t,&0
\end{pmatrix}.
\end{align*}
The cubic identity yields
 \[\|P_{R_{0}^{\perp}}
 \widetilde Q_2(A(\xi),F_2(\xi))\|^2=
\|C_3(A(\xi))\|^2,\]
which implies $\mathsf S_\xi(\widetilde{T}_0(\xi \xi^t))=0$.
\end{proof}

\begin{lemma}\label{lem:G-scalar}
    If $\mathsf S_\xi(G(\xi \xi^t))=0$, then the linear operator 
    $G: \mathscr S \rightarrow \mathscr S$ is scalar.
\end{lemma}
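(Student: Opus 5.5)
The plan is to combine Lemma~\ref{lem:SG-zero} with a polarization argument. Fix $\xi\neq 0$ and write $\xi=\lambda_x$ for a suitable $x$. The hypothesis $\mathsf S_\xi(G(\xi\xi^t))=0$, together with Lemma~\ref{lem:SG-zero}, gives a scalar function $g(\xi)$ with
\[
G(\xi\xi^t)=g(\xi)\,\xi\xi^t
\]
for every nonzero $\xi\in W^*$. It therefore suffices to show that $g$ is constant on $W^*\setminus\{0\}$. Once this holds, $G$ agrees with $g\cdot\mathrm{id}$ on all rank-one symmetric matrices $\xi\xi^t$. These span $\mathscr S$, so $G=g\,I$.

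To prove that $g$ is constant, take two linearly independent covectors $\xi,\eta$ and apply the relation to $\xi+t\eta$ for $t\in\C$. Expanding by linearity of $G$ gives
\[
g(\xi)\xi\xi^t+t\,G(\xi\eta^t+\eta\xi^t)+t^2g(\eta)\eta\eta^t
= g(\xi+t\eta)\bigl(\xi\xi^t+t(\xi\eta^t+\eta\xi^t)+t^2\eta\eta^t\bigr).
\]
The matrices $\xi\xi^t$, $\xi\eta^t+\eta\xi^t$ and $\eta\eta^t$ are linearly independent. For fixed nonzero $t$, compare the $\xi\xi^t$- and $\eta\eta^t$-coefficients. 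This requires first writing $G(\xi\eta^t+\eta\xi^t)$ in a form we can control.

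An alternative is to apply the hypothesis directly to the $t$-coefficient instead of going through $g$. The map $\xi\mapsto\mathsf S_\xi(G(\xi\xi^t))$ is a cubic polynomial that vanishes identically. Its $t$-coefficient at $\xi+t\eta$ is
\[
\mathsf S_\eta(G(\xi\xi^t))+\mathsf S_\xi\bigl(G(\xi\eta^t+\eta\xi^t)\bigr)=0.
\]
Substituting $G(\xi\xi^t)=g(\xi)\xi\xi^t$ and using the identity $\mathsf S_\eta(\xi\xi^t)=-\mathsf S_\xi(\xi\eta^t+\eta\xi^t)$, already used in the proof of Proposition~\ref{prop:no_independent_character}, we obtain
\[
\mathsf S_\xi\bigl(G(\xi\eta^t+\eta\xi^t)-g(\xi)(\xi\eta^t+\eta\xi^t)\bigr)=0.
\]
Lemma~\ref{lem:SG-zero} then yields
\[
G(\xi\eta^t+\eta\xi^t)=g(\xi)(\xi\eta^t+\eta\xi^t)+\mu\,\xi\xi^t
\]
for some scalar $\mu$. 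Exchanging the roles of $\xi$ and $\eta$ gives
\[
G(\xi\eta^t+\eta\xi^t)=g(\eta)(\xi\eta^t+\eta\xi^t)+\mu'\,\eta\eta^t.
\]
Comparing the two expressions, using the linear independence of the three matrices, forces $g(\xi)=g(\eta)$ and $\mu=\mu'=0$.

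For dependent $\xi,\eta$, both are nonzero multiples of one covector, so $g(\xi)=g(\eta)$ follows immediately from linearity of $G$. Alternatively, since $m\ge2$, one can pass through a third independent covector. Hence $g$ is constant.

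The main point needing care is the sign identity $\mathsf S_\eta(\xi\xi^t)=-\mathsf S_\xi(\xi\eta^t+\eta\xi^t)$, which should be checked directly from the definition of $\mathsf S$. This identity is exactly what lets the $t$-linear term collapse to a single $\mathsf S_\xi$; the rest of the argument is linear algebra.
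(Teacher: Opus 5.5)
Your argument is correct, but it shows that the eigenvalue function is constant by a different route from the paper. Both proofs begin the same way: Lemma~\ref{lem:SG-zero} makes every $\xi\xi^t$ an eigenvector, $G(\xi\xi^t)=g(\xi)\,\xi\xi^t$.

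\textbf{The paper's route.} It argues topologically. The function $g(\xi)=\langle G(\xi\xi^t),\xi\xi^t\rangle/\|\xi\xi^t\|^2$ is continuous on the connected set $\C^m\setminus\{0\}$ and takes values in the finite spectrum of $G$. Hence it is constant.

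\textbf{Your route.} You polarize the cubic identity $\mathsf S_\xi(G(\xi\xi^t))=0$. The sign identity $\mathsf S_\eta(\xi\xi^t)=-\mathsf S_\xi(\xi\eta^t+\eta\xi^t)$ is correct; it is just the $t$-coefficient of $\mathsf S_{\xi+t\eta}\bigl((\xi+t\eta)(\xi+t\eta)^t\bigr)=0$. You then apply Lemma~\ref{lem:SG-zero} a second time. Finally you compare the two resulting expressions for $G(\xi\eta^t+\eta\xi^t)$, using that $\xi\xi^t$, $\xi\eta^t+\eta\xi^t$, $\eta\eta^t$ are linearly independent when $\xi,\eta$ are.

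\textbf{Comparison.} The paper's proof is shorter and uses only the weaker consequence that $G$ preserves each line $\C\,\xi\xi^t$. Yours is purely algebraic, with no continuity or connectedness needed. It also directly gives $G(\xi\eta^t+\eta\xi^t)=g\,(\xi\eta^t+\eta\xi^t)$ as a by-product.

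\textbf{Your first sketch.} The abandoned first approach can simply be deleted. If you want to keep it, it can be finished without the second use of Lemma~\ref{lem:SG-zero}. Add the relations at $t=1$ and $t=-1$: this eliminates $G(\xi\eta^t+\eta\xi^t)$ and gives $2g(\xi)=g(\xi+\eta)+g(\xi-\eta)=2g(\eta)$.
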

\begin{proof}
    From Lemma \ref{lem:SG-zero}, $G$ sends every rank one
symmetric matrix into its own span. The associated eigenvalue is
continuous on $\C^m\setminus\{0\}$ and takes values in a finite
spectrum, so it is constant. Rank-one matrices span $\Sym_m(\C)$,
therefore $G$ is scalar.
\end{proof}

The following proposition yields the required expansion \eqref{equ:expand-T-second-order}(or \eqref{equ:expand-T-second-order-rep}) in Proposition \ref{III:prop:exact-Q1} and Proposition \eqref{III:prop:exact-Q2}, under a mild condition.
\begin{proposition}\label{prop:satisfy-order-two-expansion}
 Suppose that there are nonzero admissible multiplications $b_j\to0$ which satisfy the cubic identity \eqref{eq:cubic-common}. Write $\widetilde{T}_j$ for the coordinate operator
$\widetilde{T}_{b_j}=\nu_{b_j}^{-1}T_{b_j}\rho_{b_j}$, so that
$\widetilde{T}_j^*G_{N,j}\widetilde{T}_j=G_{R,j}$, where $G_{N,j}, G_{R,j}, \nu_{b_j},\rho_{b_j}$ are defined as in Section \ref{subsec:deformcalcu}. Suppose $\widetilde{T}_j\rightarrow I$ and $\operatorname{tr}\widetilde{T}_j$ is real and positive, then
\[\|\widetilde{T}_j-I\|=O(\|b_j\|^2).\]
\end{proposition}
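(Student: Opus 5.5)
We need to show that $\widetilde T_j - I = O(\|b_j\|^2)$. Write $b_j=\varepsilon_j c_j$ with $\varepsilon_j=\|b_j\|$ and $\|c_j\|=1$. After passing to subsequences, we may assume $c_j\to c$. From \eqref{equ:GN} and \eqref{equ:GR}, both Gram operators satisfy $G_{N,j}=I+O(\varepsilon_j^2)$ and $G_{R,j}=I+O(\varepsilon_j^2)$. Hence the coordinate isometry equation $\widetilde T_j^*G_{N,j}\widetilde T_j=G_{R,j}$ gives $\widetilde T_j^*\widetilde T_j=I+O(\varepsilon_j^2)$.

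Take the polar decomposition $\widetilde T_j=U_j|\widetilde T_j|$. Then $|\widetilde T_j|=I+O(\varepsilon_j^2)$, so it suffices to show that the unitary part satisfies $U_j-I=O(\varepsilon_j^2)$. Since $\widetilde T_j\to I$, also $U_j\to I$, and we can write $U_j=\exp(\Theta_j)$ with $\Theta_j$ skew-Hermitian and $\Theta_j\to0$. The task is therefore to prove $\|\Theta_j\|=O(\varepsilon_j^2)$.

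We argue by contradiction. Suppose $\delta_j:=\|\Theta_j\|$ satisfies $\delta_j/\varepsilon_j^2\to\infty$ along a subsequence. Normalize $\Theta_j/\delta_j\to\Theta$, where $\|\Theta\|=1$ and $\Theta^*=-\Theta$. Then
\[
\widetilde T_j=I+\delta_j\Theta+o(\delta_j)+O(\varepsilon_j^2).
\]
Now feed this expansion into the cubic identity \eqref{eq:cubic-common} at $u=0$, following the block computation of Proposition \ref{III:prop:exact-Q2} but keeping track of two small parameters, $\varepsilon_j$ and $\delta_j$. Set $H_j=\widetilde T_jP$ with $P=\xi\xi^t$, so that $H_j=P+\delta_j\Theta P+\ldots$. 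The $01$-block $Y_j$ of $\widetilde Q_2$ then equals $\delta_j\mathsf S_\xi(\Theta P)+O(\varepsilon_j^2)+o(\delta_j)$, because $\mathsf S_\xi(P)=0$. The projection correction $\rho(h_j)$ has an $01$-block of size $\varepsilon_j D_c h_j=O(\varepsilon_j^2+\varepsilon_j\delta_j)$, which is again $o(\delta_j)$ relative to the leading term. The right side $\|C_3(A_\varepsilon)\|^2$ contributes $\varepsilon^2\|\mathsf K(C_\xi,P)\|^2+O(\varepsilon^4)$. In the projected left side, the $11$-block gives $\varepsilon\mathsf K(C_\xi,P)+O(\varepsilon\delta+\varepsilon^3)$, so the $\varepsilon^2$ terms cancel exactly as before, leaving a cross term of size $O(\varepsilon^2\delta)$. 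The defect therefore reads
\[
0=2\delta_j^2\|\mathsf S_\xi(\Theta P)\|^2+O(\varepsilon_j^2\delta_j)+O(\varepsilon_j^4)+o(\delta_j^2),
\]
uniformly in bounded $\xi$ by Remark \ref{expansion_holds_for_sequence}. Dividing by $\delta_j^2$ and letting $j\to\infty$ gives $\mathsf S_\xi(\Theta(\xi\xi^t))=0$ for all $\xi$.

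By Lemma \ref{lem:G-scalar}, $\Theta=i\theta I$ for some real $\theta\neq0$. The trace condition now supplies the contradiction. Since $|\widetilde T_j|=I+O(\varepsilon_j^2)$, we have
\[
\operatorname{tr}\widetilde T_j=\operatorname{tr}U_j+O(\varepsilon_j^2),
\]
and the imaginary part of this is $\delta_j\theta\dim\mathscr S+o(\delta_j)+O(\varepsilon_j^2)$. Because $\operatorname{tr}\widetilde T_j$ is real, this imaginary part must vanish, and dividing by $\delta_j$ forces $\theta=0$, a contradiction. Hence $\delta_j=O(\varepsilon_j^2)$, and so $\|\widetilde T_j-I\|=O(\|b_j\|^2)$.

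The main obstacle is the bookkeeping in the defect expansion. One must confirm that every term other than $2\delta^2\|\mathsf S_\xi(\Theta P)\|^2$ is genuinely $o(\delta^2)$ once $\varepsilon^2=o(\delta)$. In particular, the cross terms between the $11$-blocks coming from $\Omega$ and from $C_3$ contribute only at order $\varepsilon^2\delta$, and the projection correction does not produce a competing $\delta^2$ contribution in the $01$-block. Both claims are checked using the exact block formulas from the proof of Proposition \ref{III:prop:exact-Q2}, now with $H_\varepsilon=P+\delta\Theta P+\ldots$ in place of $P+\varepsilon^2M$.
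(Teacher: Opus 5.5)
Your argument is correct. It rests on the same two ingredients as the paper's proof.

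\begin{enumerate}
\item The cubic-defect expansion has no term of order $\varepsilon s$. Your bookkeeping with $s\sim\delta_j$ reproduces exactly the paper's estimate $\Delta(\xi)=2\|\mathsf S_\xi(VP)\|^2+O(\varepsilon^2s+\varepsilon^4+\varepsilon^2s^2)$. The $00$-block, which you do not mention, is $O(\varepsilon s+\varepsilon^3)$ and so contributes only $o(\delta_j^2)+O(\varepsilon^4)$.
\item The map $G\mapsto(\xi\mapsto\mathsf S_\xi(G(\xi\xi^t)))$ is injective modulo scalars, by Lemma~\ref{lem:G-scalar}.
\end{enumerate}

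The organization differs, though.

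The paper splits $V=\widetilde T-I$ additively as $K_0+H_0$, skew-Hermitian plus Hermitian. Reality of the trace then gives $\operatorname{tr}K_0=0$ exactly, and the isometry equation gives $\|H_0\|=O(\varepsilon^2+s^2)$. Norm equivalence on trace-free skew-Hermitian operators supplies a coercivity constant $\kappa$. Integrating $\Delta=0$ over the unit sphere then produces the explicit inequality $s^2\le C(\varepsilon^2s+\varepsilon^4+s^4+\varepsilon^2s^2)$.

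You instead split multiplicatively via the polar decomposition. This controls the Hermitian factor more sharply: $|\widetilde T_j|=I+O(\varepsilon_j^2)$ with no $s^2$ term. You then replace the coercivity constant by a blow-up limit $\Theta$ of $\Theta_j/\|\Theta_j\|$. The price is twofold. First, the trace normalization only holds asymptotically for $\operatorname{tr}\Theta_j$, which you handle correctly using $\varepsilon_j^2=o(\delta_j)$. Second, the argument is non-effective and needs subsequences.

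The paper's route gives a direct quantitative estimate along the whole sequence. Yours is a compactness version of the same coercivity, and it makes slightly more transparent why the trace condition is exactly what removes the scalar direction left over by Lemma~\ref{lem:G-scalar}.
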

\begin{proof}
Suppress the subscript and let
\[
\varepsilon=\|b\|,\quad V=\widetilde{T}-I,\quad s=\|V\|,
\]where $\|b\|^2=\sum_{i,j,k}|b^k_{ij}|^2$.
Here $C$ is the map associated with the multiplication
$b$, explicitly $(C_\xi)_{ij}=\sum_kb_{ij}^{\,k}\xi_k$.
Write $D=D_b$ and $E=E_b$. The norms of $C,D,E$ are $O(\varepsilon)$,
$O(\varepsilon)$ and $O(\varepsilon^2)$, respectively. For
$\|\xi\|\le1$, let $P=\xi \xi^t$, $H=\widetilde{T}P$, $\chi=C^*H$, and
\[
J=\tfrac12(\xi \chi^t+\chi \xi^t),\quad
Y=\mathsf S_\xi(VP)-\tfrac12 \mathsf R(\chi,C_\xi),\quad
Z=\mathsf K(C_\xi,P)+\mathsf K(C_\xi,VP),
\]
where $\mathsf R(\chi,C)_{i,jk}=\chi_jC_{ik}-\chi_kC_{ij}$.
With $A=J_b(0,\xi)$ and $F_2=\nu_b(\widetilde{T}P)$, the exact block expansion similar as in the proof of Proposition \ref{III:prop:exact-Q2} yields
\[
\widetilde Q_2(A,F_2)=\begin{pmatrix}J&Y\\Y^t&Z\end{pmatrix}
\quad\text{at }u=0.
\]
Recall that $\rho(H)=\begin{pmatrix}
    -H &DH\\(DH)^t&EH
\end{pmatrix}$. A computation yields
\[
\rho^*(\widetilde Q_2(A,F_2))
=
-J+2D^*Y+E^*Z, \quad
\rho^*\rho=I+2D^*D+E^*E.
\]
Note that $P_R=\rho(\rho^*\rho)^{-1}\rho^*$, thus 
\[
P_R\widetilde Q_2(A,F_2)=
\rho(\rho^*\rho)^{-1}\rho^*(\widetilde Q_2(A,F_2))=\rho(h)\]
for \[h=(I+2D^*D+E^*E)^{-1}(-J+2D^*Y+E^*Z)
  =-J+O(\varepsilon s+\varepsilon^3).\]
All estimates here are uniform for $\|\xi\|\le1$ and small
$\varepsilon,s$.  
As \eqref{equ:polarization-D-c},\eqref{equ:polarization-E-c}, we have
\[
DJ=\tfrac12\mathsf S_\xi(C\chi)+\tfrac12\mathsf R(\chi,C_\xi),\quad
EJ=\tfrac12\mathsf K(C_\xi,C\chi).
\]
Note that $CC^*VP=O(\varepsilon^2s), H=\widetilde{T}P=P+VP$, we have
\begin{align*}
    (W_\perp)_{01}=Y-Dh
&=Y+DJ+O(\varepsilon^2 s+\varepsilon^4)
\\&=\mathsf S_\xi\bigl(VP+\tfrac12CC^*H\bigr)
+O(\varepsilon^2s+\varepsilon^4)
\\&=\mathsf S_\xi\bigl((V+\tfrac12CC^*)P\bigr)
+O(\varepsilon^2 s+\varepsilon^4).
\end{align*}
Also note that $\mathsf K(C_\xi, CC^*VP)=O(\varepsilon^3s)$,
\begin{align*}
(W_\perp)_{11}=Z-Eh
&=Z+EJ+O(\varepsilon^3 s+\varepsilon^5) \\
&=\mathsf K(C_\xi,P)+\mathsf K\bigl(C_\xi,VP+\tfrac12CC^*H\bigr)
+O(\varepsilon^3s+\varepsilon^5).
\\
&=\mathsf K(C_\xi,P)+\mathsf K\bigl(C_\xi,(V+\tfrac12CC^*)P\bigr)
+O(\varepsilon^3s+\varepsilon^5).
\end{align*}
In summary, the blocks of
$W_\perp=\widetilde Q_2(A,F_2)-\rho(h)$ satisfy
\begin{align*}
(W_\perp)_{00}&=O(\varepsilon s+\varepsilon^3),\\
(W_\perp)_{01}
 &=\mathsf S_\xi\bigl((V+\tfrac12CC^*)P\bigr)
   +O(\varepsilon^2s+\varepsilon^4),\\
(W_\perp)_{11}
 &=\mathsf K(C_\xi,P)+\mathsf K\bigl(C_\xi,(V+\tfrac12CC^*)P\bigr)
   +O(\varepsilon^3s+\varepsilon^5).
\end{align*}
As \eqref{equ:C_3_A}, the blocks of $C_3(A)$ are $-\mathsf K(C_\xi,P)$, $Z_2$, $Z_2^t$ and
$C_3(C_\xi)$, of orders $\varepsilon,\varepsilon^2,
\varepsilon^2,\varepsilon^3$. Thus the leading order-two squared
norms $\|K(C_\xi,P)\|^2$ cancel exactly. The diagonal defect of $\eqref{eq:cubic-common}$ $\Delta(\xi)$
therefore obeys
\begin{equation}
\label{equ:delta_expansion_estimiate}
\Delta(\xi)=2\|\mathsf S_\xi(VP)\|^2
 +O(\varepsilon^2s+\varepsilon^4+\varepsilon^2s^2).
\end{equation}
In particular no order-$\varepsilon s$ term occurs.  
Also, as \eqref{equ:GN},\eqref{equ:GR}, we have 
\[G_N=I+\frac{1}{2}CC^*, \quad G_R=I+2D^*D+E^*E.\]
Thus $\widetilde{T}^*G_N\widetilde{T}=G_R$ yields
\[
V+V^*=O(\varepsilon^2+s^2).
\]
Write $V=K_0+H_0$, with $K_0$ skew-Hermitian and $H_0$ Hermitian.
Then $\operatorname{tr}K_0=0$ as each $T_j$ is chosen so that $\operatorname{tr}T_j$ is real.
Also, $\|H_0\|=O(\varepsilon^2+s^2)$.
The linear map
\[
K_0\mapsto\bigl(\xi\mapsto \mathsf S_\xi(K_0(\xi \xi^t))\bigr)
\]
is injective on the real vector space of trace-free skew-Hermitian endomorphisms
of $\mathscr S$ from Lemma \ref{lem:G-scalar}, and trace-free scalars are zero. Hence finite-dimensional
norm equivalence, with $\mathrm{d}\mu$ the normalized rotation-invariant measure on
$\{\xi\in W^*:\|\xi\|=1\}$, yields
\begin{equation}\label{equ:compare_1}
    \int\|\mathsf S_\xi(K_0P)\|^2\,\mathrm{d}\mu(\xi)\ge\kappa\|K_0\|^2
\end{equation}
for some $\kappa>0$ depending only on $m$.
Note that $\|a+b\|^2\geq\frac{1}{2}\|a\|^2-\|b\|^2$, we have
\begin{equation}\label{equ:compare_2}
\|K_0\|^2\geq \frac{1}{2}\|V\|^2-\|H_0\|^2,
\end{equation}
and also
\begin{equation}\label{equ:compare_3}
\int\|\mathsf S_\xi(K_0P)\|^2\,\mathrm{d}\mu(\xi)-\int2\|\mathsf S_\xi(H_0P)\|^2\,\mathrm{d}\mu(\xi)\leq \int 2\|\mathsf S_\xi(VP)\|^2\,\mathrm{d}\mu(\xi).\end{equation}
 Integrating $\Delta(\xi)=0$ on both sides and putting \eqref{equ:delta_expansion_estimiate},\eqref{equ:compare_1},\eqref{equ:compare_2},\eqref{equ:compare_3} together, it follows from $\|H_0\|=O(\varepsilon^2+s^2)$ that
\[
s^2\le C(\varepsilon^2s+\varepsilon^4+s^4+\varepsilon^2s^2).
\]
Since $s,\varepsilon\to0$, the last two terms can be absorbed.
The remaining inequality is $s^2\le C(\varepsilon^2s+\varepsilon^4)$.
Hence $s=O(\varepsilon^2)$.
\end{proof}

\begin{proposition}
\label{III:prop:metric-gap}
Let $m\ge2$. In a neighborhood of the zero multiplication, the
only commutative associative $b$ for which there is a linear isometry
$T_b:R_b\to N_b$ satisfying the cubic identity \eqref{eq:cubic-common} is $b=0$.
\end{proposition}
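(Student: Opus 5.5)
We argue by contradiction, combining the preceding results. Suppose there is a sequence of nonzero commutative associative multiplications $b_j\to0$, each admitting a linear isometry $T_{b_j}:R_{b_j}\to N_{b_j}$ that satisfies the cubic identity \eqref{eq:cubic-common}. Write $b_j=\varepsilon_j c_j$ with $\varepsilon_j=\|b_j\|$ and $\|c_j\|=1$. After passing to a subsequence, $c_j\to c$ with $\|c\|=1$. Commutativity and associativity are closed conditions, so $c$ is again a commutative associative multiplication on $W=\C^m$.

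\emph{Step 1: normalizing $\widetilde T_j$.} The coordinate operators $\widetilde T_j=\nu_{b_j}^{-1}T_{b_j}\rho_{b_j}$ are uniformly bounded, because $G_N,G_R\to I$ and $\widetilde T_j^*G_N\widetilde T_j=G_R$. Any limit point $\widetilde T$ is unitary on $\mathscr S$. The cubic identity passes to the limit $b=0$, so Lemma \ref{lem:cubic_at_b_equal_0} and Lemma \ref{lem:G-scalar} force $\widetilde T=\theta I$ with $|\theta|=1$.

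The freedom to choose $T_b$ is not obviously available here, since $T_b$ is given. Instead we rescale by a phase. Replacing $T_{b_j}$ by $e^{i\phi_j}T_{b_j}$ preserves both the isometry property and the cubic identity \eqref{eq:cubic-common}. This is because $F_2$ changes by a phase, and $P_{R^\perp}\widetilde Q_2(Az,F_2)$ scales linearly in $F_2$, so its Gram kernel in \eqref{eq:cubic-common} is unchanged. We choose $\phi_j$ so that $\operatorname{tr}\widetilde T_j$ is real and positive. Then every limit point is $I$, and so $\widetilde T_j\to I$.

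\emph{Step 2: the second-order expansion.} Proposition \ref{prop:satisfy-order-two-expansion} now gives $\|\widetilde T_j-I\|=O(\varepsilon_j^2)$. Writing $\widetilde T_j=I+\varepsilon_j^2L_j$, the operators $L_j$ are bounded, so after passing to a subsequence $L_j\to L$. Comparing the $\varepsilon^2$ terms of $\widetilde T_j^*G_N\widetilde T_j=G_R$ along the sequence, as in Proposition \ref{III:prop:exact-Q1}, yields $L=D_c^*D_c-\tfrac14CC^*+K$ with $K^*=-K$.

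The expansions in Propositions \ref{III:prop:exact-Q1} and \ref{III:prop:exact-Q2} were stated for a fixed $c$ with $\widetilde T_\varepsilon=I+\varepsilon^2L+o(\varepsilon^2)$. Along our sequence both $c_j$ and $L_j$ vary. By Remark \ref{expansion_holds_for_sequence}, however, the remainders are uniform over bounded $c,L,\xi$. Hence
\[
0=\Delta_j(\xi)=\varepsilon_j^4\,Q_{c_j,K_j}(\xi)+o(\varepsilon_j^4),
\]
with $Q$ given by \eqref{III:eq:exact-Q}. Here $Q$ is polynomial in $(c,L)$, so the formula can be evaluated directly at $(c_j,L_j)$. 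Dividing by $\varepsilon_j^4$ and letting $j\to\infty$ gives $Q_{c,K}(\xi)=0$ for all $\xi$, where $K$ is the skew part of $L$.

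\emph{Step 3: conclusion.} Proposition \ref{III:lem:algebra-obstruction} then forces $c=0$, which contradicts $\|c\|=1$. Hence a whole neighborhood of $b=0$ contains no nonzero admissible $b$.

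The main obstacle is the uniformity in Step 2. Propositions \ref{III:prop:exact-Q1} and \ref{III:prop:exact-Q2} are phrased for a fixed $c$ with an $o(\varepsilon^2)$ expansion. One must check that the identity $L_j+L_j^*+\tfrac12C_jC_j^*=2D_{c_j}^*D_{c_j}+O(\varepsilon_j^2)$ holds along the sequence. It must also be checked that the $o(\varepsilon^2)$ error in $\widetilde T$ enters $\Delta$ only at order $o(\varepsilon^4)$. This follows from the same block computation: the error appears in $\mathsf S_\xi(\cdot)$ at order $\varepsilon^2$ and is multiplied by another $O(\varepsilon^2)$ factor. I would verify this by rerunning the bookkeeping of Proposition \ref{III:prop:exact-Q2} with $\widetilde T_j=I+\varepsilon_j^2L_j$ exactly, so that no $o(\cdot)$ terms are needed.
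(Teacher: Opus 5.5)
Your proposal is correct and follows essentially the same route as the paper's proof. You bound and extract a limit of the coordinate operators, then use Lemma~\ref{lem:cubic_at_b_equal_0}, Lemma~\ref{lem:G-scalar} and a phase normalization to get $\widetilde T_j\to I$. You then invoke Proposition~\ref{prop:satisfy-order-two-expansion} to make $L_j=\varepsilon_j^{-2}(\widetilde T_j-I)$ bounded, and pass to the limit in the uniform order-$\varepsilon^4$ expansion (Remark~\ref{expansion_holds_for_sequence}) to get $Q_{c,K}\equiv0$, which contradicts Proposition~\ref{III:lem:algebra-obstruction}.

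Your extra care about uniformity is exactly the point the paper delegates to that remark: along the sequence, the Hermitian part of $L_j$ matches $D_{c_j}^*D_{c_j}-\tfrac14 C_jC_j^*$ only up to $O(\varepsilon_j^2)$. This shifts $Q$ by $O(\varepsilon_j^2)$, which is harmless after multiplying by $\varepsilon_j^4$.
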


\begin{proof}
Suppose that there are nonzero admissible multiplications $b_j\to0$ which satisfy the cubic identity \eqref{eq:cubic-common}.
Write $\widetilde{T}_j$ for the coordinate operator
$\widetilde{T}_{b_j}=\nu_{b_j}^{-1}T_{b_j}\rho_{b_j}$, so that
$\widetilde{T}_j^*G_{N,j}\widetilde{T}_j=G_{R,j}$, where $G_{N,j}, G_{R,j}, \nu_{b_j},\rho_{b_j}$ are defined as in Section \ref{subsec:deformcalcu}. By \eqref{equ:GN}, \eqref{equ:GR},  the $\widetilde{T}_j$ are bounded and a subsequence converges to a unitary $\widetilde{T}_0$.

At $b=0$, from Lemma \ref{lem:cubic_at_b_equal_0}, the cubic identity \eqref{eq:cubic-common} at $u=0$ yields
$\mathsf S_\xi(\widetilde{T}_0(\xi \xi^t))=0$ for every $\xi\in W^*$. Thus $\widetilde{T}_0$ is scalar, by Lemma \ref{lem:G-scalar}. Multiplication of $\widetilde{T}_j$ by a scalar $\lambda_j(|\lambda_j|=1)$ 
preserves the cubic identity, as it preserves $\widetilde{T}_j^*G_{N,j}\widetilde{T}_j=G_{R,j}$, and $\|\widetilde Q_2(A,F_2)\|$.
Replacing $T_j$ by suitable $\lambda_j T_j$, we may assume that
$T_j\to I$ and $\operatorname{tr}T_j$ is real and positive.

Let
\[
\varepsilon_j=\|b_j\|,\quad  c_j=\frac{b_j}{\varepsilon_j},\quad
L_j=\frac{1}{\varepsilon_j^2}(\widetilde{T}_j-I).
\]
From Proposition \ref{prop:satisfy-order-two-expansion}, these are bounded; each $c_j$ is associative and $\|c_j\|=1$.
After a subsequence $c_j\to c$, $L_j\to L$, where $c$ is associative
and $\|c\|=1$. In the following limit formula $C$ denotes the map
associated with the normalized limit multiplication $c$.
Combining Proposition \ref{III:prop:exact-Q1} with Proposition \ref{prop:satisfy-order-two-expansion}, we know
\[
L=D_c^*D_c-\tfrac14CC^*+K,\quad K^*=-K.
\]
For bounded pairs
$(c_j,L_j)$, we obtain the exact cubic defect $\Delta_{\varepsilon_j}(\xi)$ and  divide it by $\varepsilon_j^4$.
Their remainders are uniform, see Remark \ref{expansion_holds_for_sequence}.  
The resulting limit is exactly the coefficient
computed in Proposition~\ref{III:prop:exact-Q2}:
\[
Q_{c,K}(\xi)=0\quad(\xi\in\C^m).
\]
Proposition~\ref{III:lem:algebra-obstruction} now yields $c=0$, contradicting
$\|c\|=1$. 
\end{proof}

\begin{proposition}[Type III local rigidity]
    \label{III:prop:local}
There exists a neighborhood $\mathscr U_{\mathrm{III}}$ of $E_0$ in
$\Gr(d,\Sym_n\C)$ with the following property. If $F$ is a normalized
isometry from $\B^d$ to $D_n^{\mathrm{III}}$ and its tangent space
$E=F'(0)(\C^d)$ belongs to $\mathscr U_{\mathrm{III}}$, then $E$ is
unitarily congruent to $E_0$.
\end{proposition}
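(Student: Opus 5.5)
The proof is an assembly of the preceding results. Choose $\mathscr U_{\mathrm{III}}$ inside the neighborhood $\mathscr U$ of Lemma~\ref{III:lem:hankel}, and small enough that two further conditions hold. First, the quadratic rank is lower semicontinuous, so $\mathfrak r(E)\ge\mathfrak r(E_0)=d^\#$ for every $E\in\mathscr U_{\mathrm{III}}$. Second, the multiplications $b$ that Lemma~\ref{III:lem:hankel} attaches to planes in $\mathscr U_{\mathrm{III}}$ lie in the neighborhood of zero supplied by Proposition~\ref{III:prop:metric-gap}. The second condition is possible because that lemma lets us choose $b\to0$ as $E\to E_0$.

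Let $F$ be a normalized isometry whose tangent space is $E\in\mathscr U_{\mathrm{III}}$. Lemma~\ref{lem:quadratic-common} gives $\dim R=\dim\Span C_2(E)\le d^\#$. Combined with lower semicontinuity, this yields equality, so \eqref{equ:IIIsharp plane} holds. Lemma~\ref{III:lem:hankel} then produces a unitary congruence $\Phi(X)=VXV^t$, with $V$ unitary and close to the identity, such that $\Phi(E)=\cH_b$ for a small commutative associative $b$.

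Replace $F$ by $\Phi\circ F\circ\sigma$. Here $\sigma\in U(d)$ is chosen so that the new differential is the frame $(u,\xi)\mapsto J_b(u,\xi)$, composed with whatever unitary identification $A^*A=I_d$ requires. This is legitimate: $\Phi$ is a linear automorphism of $D^{III}_n$ fixing $0$ and preserving the Hilbert--Schmidt norm, so the new map is still a normalized isometry, and it has tangent space $\cH_b$.

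Because $\dim R=d^\#$, Proposition~\ref{prop:cubic-common} applies. It makes $T:R_b\to N_b$ a unitary isomorphism with $F_2(z)=TC_2(Az)$, and the cubic identity \eqref{eq:cubic-common} holds. Restricting to $u=0$ shows that $b$ admits a linear isometry $T_b=T$ satisfying the cubic identity. Proposition~\ref{III:prop:metric-gap} then forces $b=0$, so $\cH_0=E_0$ and $E$ is unitarily congruent to $E_0$.

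The main point to check is the bookkeeping in the previous step. The isometry $T_b$ used in Section~\ref{subsec:deformcalcu} must be exactly the one furnished by Lemma~\ref{lem:quadratic-common} for the renormalized map. Its coordinate form is defined through the frames $\rho_b,\nu_b$, which depend only on the plane, so the hypothesis of Proposition~\ref{III:prop:metric-gap} concerns the plane $\cH_b$ together with some unitary $T_b$. That hypothesis is therefore met.

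The other check is that all the constants are uniform. We need a single neighborhood of $E_0$ whose planes give $b$ inside the rigidity neighborhood of Proposition~\ref{III:prop:metric-gap}. The quantitative statement $b\to0$ as $E\to E_0$ in Lemma~\ref{III:lem:hankel} provides exactly this, after the neighborhood is shrunk.
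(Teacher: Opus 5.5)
Your proposal is correct and follows the paper's proof. Lower semicontinuity of $\dim\Span C_2(E)$, combined with the bound from Lemma~\ref{lem:quadratic-common}, gives equality; Lemma~\ref{III:lem:hankel} then takes $E$ by a unitary congruence to $\cH_b$ with $b$ small; and Proposition~\ref{III:prop:metric-gap} forces $b=0$, so $\cH_0=E_0$. Your extra bookkeeping (transporting $F$ by the congruence and observing that the cubic identity depends only on the plane and $T$) is exactly what the paper leaves implicit. One correction: no unitary $\sigma$ can make the differential literally equal to the frame $J_b$, since that frame is not isometric. You don't need it, though, because the quadratic and cubic identities are polynomial identities in the point $Az\in E$ and are unaffected by linear reparametrization of the source.
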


\begin{proof}
For each plane $E$, writing the vector of minors in a quadratic monomial basis yields a coefficient matrix. Note that $\dim\Span C_2(E)$ is exactly the rank of the coefficient matrix, which is a lower semicontinuous function with respect to the plane.
Therefore, if $E$ is a plane in a sufficiently small neighborhood of the standard plane $E_0$,
$\dim\Span C_2(E)\geq \dim\Span C_2(E_0)=d^\#$. Then the quadratic identity in Lemma \ref{lem:quadratic-common} forces $\dim\Span C_2(E)= \dim\Span C_2(E_0)$. 

Lemma~\ref{III:lem:hankel} then yields a unitary congruence taking $E$ to
$\cH_b$, with $b$ commutative, associative, and arbitrarily small when
$E$ is sufficiently close to $E_0$. Proposition \ref{III:prop:metric-gap} consequently forces $b=0$.
The resulting plane $\cH_0$ is precisely $E_0$.
\end{proof}

\begin{remark}
    $n\geq 3$ is essential. For $D_2^{III}$, $\dim R=1$ and $R^\perp$ is trivial. Also, $Q_3$ is trivial. Hence the cubic identity \eqref{eq:cubic-common} is trivial and nontrivial deformation of the tangent subspace does exist.
\end{remark}
\begin{remark}
   In the Type IV domain case, local tangent rigidity fails. Indeed, for $D_n^{IV},d=n-1$, normal forms of $(n-1)$-dimensional tangent subspaces can be easily obtained by the conformal structure, see \cite[Proposition 7]{D22}. They are either tangent to a complex unit ball from the Mok embedding, or tangent to a geodesic subdomain $D^{IV}_{n-1}\subset D^{IV}_{n}$. The former forms a closed orbit in $\operatorname{Gr}(n-1,n)$, while the latter forms an open orbit as the complement. Thus local deformation of the former may touch the open orbit and hence local tangent rigidity fails. 
\end{remark}

\subsection{Selecting a fixed normalization}
\label{sec:exact-tangent-assembly}

\begin{proof}[Proof of Proposition~\ref{prop:exact-tangent}]
By Proposition~\ref{prop:boundary-scaling}, choosing equivalent
normalized maps $F_r=\tau_r\circ F\circ\sigma_r$ with
$E_r=\mathrm{d}(F_r)_0(\C^d)\to E_0$, 
when $r\rightarrow 1^-$, $\|P_{E_r}-P_{E_0}\|_{\text{op}} \rightarrow 0$.
Then we apply Proposition~\ref{I:local} for Type I,
Proposition~\ref{II:local-rigidity} for Type II, and
Proposition~\ref{III:prop:local} for Type III: for one 
$r_0<1$ with $1-r_0$ sufficiently small, there is a target automorphism $k$ fixing zero
with $\mathrm{d}k_0(E_{r_0})=E_0$.  

Let $H=k\circ F_{r_0}$ and $A=\mathrm{d}H_0$. The map $A:\C^d\to E_0$
is an isometric linear bijection by \eqref{eq:normal-common}.
Since $A_0$ is also an isometric linear bijection onto $E_0$, the map
\[
V=A_0^*A:\C^d\rightarrow\C^d
\]
is unitary and $A=A_0V$, since 
$V^*V=A^*P_{E_0}A=A^*A=I_d$ and
$A_0V=P_{E_0}A=A$. Therefore the source unitary $u(z)=V^{-1}z$
yields $\mathrm{d}(H\circ u)_0=A_0$. Taking
\[
\tau_0=k\circ \tau_{r_0}, \sigma_0=\sigma_{r_0}\circ u
\]
proves the assertion. 
\end{proof}

\section{Reconstruction of the holomorphic map and completion of the proof}\label{sec:reconstruction}

In this section $G:\B^d\to D$ is an isometry with $G(0)=0$ and
$\mathrm{d}G_0=A_0$, where $A_0$ is given in 
\eqref{eq:standard-inclusions}. Proposition~\ref{prop:exact-tangent}
has produced such a map equivalent to the arbitrary original isometry.
It is known to experts how to recover the holomorphic map from the standard tangent plane, either using the mechanism of Hermitian Jordan triple systems (cf. \cite{UWZ19}) or the framework of Schubert variety (see the discussion in the introduction). 
In particular, Propositions \ref{prop:reconstruction} and Proposition \ref{II:integration} are obtained in \cite{UWZ19} (cf. Theorem 4.5, Lemma 4.6, Lemma 4.10 and Lemma 4.11). 
We include a direct proof in classical matrix coordinates to obtain the explicit formulas.

\subsection{Type I and III cases}

The following result treats Types I and III together. 

\begin{proposition}\label{prop:reconstruction}
Suppose $G(0)=0$ and $\mathrm{d}G_0=A_0$ in Type I or Type III.
Then $G$ is exactly one of the following maps:
\begin{align}
M^I_\lambda(a,x,y)&=
\begin{pmatrix}a&x\\y&\dfrac{\lambda yx}{1+\lambda a}\end{pmatrix},
&& \label{eq:phase-I}\\
M^{III}_\lambda(a,\zeta)&=
\begin{pmatrix}a&\frac{\zeta^t}{\sqrt2}\\ \frac{\zeta}{\sqrt2}&\dfrac{\lambda \zeta \zeta^t}{2(1+\lambda a)}\end{pmatrix},
&&\label{eq:phase-III}
\end{align}
for some constant $\lambda$ with $|\lambda|=1$. 
\end{proposition}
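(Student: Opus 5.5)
The plan is to show that $G$ is determined by its $2$-jet, and then that the only admissible $2$-jets with $\mathrm dG_0=A_0$ are those of the phase family $M_\lambda$.

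\textbf{Step 1: higher jets are determined by lower ones.} Write $G(z)=A_0z+\sum_{j\ge2}G_j(z)$. By Lemma~\ref{lem:quadratic-common}, every $G_j(z)$ lies in $E_0^\perp$, and $G_2(z)=TQ_2(A_0z)$ with $T:R\to E_0^\perp$. A direct computation on $E_0$ gives $\dim R=d^\#$, so $T$ is onto $E_0^\perp$.
\begin{itemize}
\item In Type I, the only nonzero restricted $2\times2$ minors are the corner minors $-y_ix_j$.
\item In Type III, this is the computation $R_0=\{\left(\begin{smallmatrix}H&0\\0&0\end{smallmatrix}\right)\}$ from Lemma~\ref{lem:cubic_at_b_equal_0}.
\end{itemize}
Now compare terms of bidegree $(j,2)$ in \eqref{eq:feature-common}, for $j\ge3$.
\begin{itemize}
\item The left side contributes only $\langle G_j(z),G_2(w)\rangle$.
\item On the right, $Q_2(G(z))$ is quadratic, so its degree-$j$ part involves only $G_a,G_b$ with $a+b=j$ and $a,b\ge1$. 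These are jets of order $<j$, paired against $Q_2(A_0w)$.
\item Odd $k\ge3$ and even $k\ge4$ contribute nothing in $\bar w$-degree $2$.
\end{itemize}
Since $\{G_2(w)\}$ spans $E_0^\perp$, this identity determines $G_j$ from $G_1,\dots,G_{j-1}$. By induction, two isometries with the same differential $A_0$ and the same quadratic term coincide.

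\textbf{Step 2: verify the models.} Each $M_\lambda$ equals $\lambda^{-1}M(\lambda a,\cdot)$ up to source and target unitaries. For instance, in Type I conjugate by $\operatorname{diag}(\lambda^{-1},I)$ and rescale $x$ or $y$. Hence $M_\lambda$ is again a normalized isometry with $\mathrm d(M_\lambda)_0=A_0$.

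\textbf{Step 3: pin down the quadratic term.} This is the step I expect to be the main obstacle. It remains to show that $G_2$ equals the quadratic term of some $M_\lambda$, namely $\lambda yx$ in Type I and $\lambda\zeta\zeta^t/2$ in Type III.

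In coordinates, $G_2(z)=\nu_0(\widetilde T_0P)$, where $\widetilde T_0$ is a unitary operator on the lower-block space and $P$ is the rank-one matrix $yx$ (Type I) or $\xi\xi^t$ (Type III). For $\widetilde T_0$ scalar, this becomes $G_2=\lambda\cdot(\text{model quadratic})$ after fixing the sign convention of the graph frame.

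Type III: set $u=0$ in the cubic identity \eqref{eq:cubic-common}. Lemma~\ref{lem:cubic_at_b_equal_0} then gives $\mathsf S_\xi(\widetilde T_0(\xi\xi^t))=0$ for all $\xi$. Lemma~\ref{lem:G-scalar} forces $\widetilde T_0=\lambda I$.

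Type I: redo the same block computation with $a=0$.
\begin{itemize}
\item $A_0(0,x,y)$ has rank at most $2$, so $Q_3(A_0z)=0$.
\item $P_{R^\perp}\widetilde Q_2(A_0z,G_2(z))$ consists of the minors on rows $(1,i+1)$ and columns $(j+1,k+1)$, together with their transposes on rows $(i+1,k+1)$ and column pairs through the first column.
\item These have the form $x_jU(yx)_{ik}-x_kU(yx)_{ij}$, and analogously in $y$.
\end{itemize}
Vanishing of these entries says each row of $U(yx)$ is proportional to $x$ and each column is proportional to $y$. Hence $U(yx)\in\C\,yx$ for every rank-one $yx$. The argument of Lemma~\ref{lem:G-scalar} (continuity of the eigenvalue, finiteness of the spectrum, and the fact that rank-one matrices span $M_{s,t}$) gives $U=\lambda I$ with $|\lambda|=1$.

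\textbf{Conclusion.} By Steps 2 and 3, $G$ and $M_\lambda$ are normalized isometries with the same differential $A_0$ and the same quadratic term. Step 1 then gives $G=M_\lambda$.

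The delicate points are bookkeeping the orthogonal projection onto $R^\perp$ in Type I, and checking that the sign and phase normalization matches exactly the $\lambda$ appearing in \eqref{eq:phase-I} and \eqref{eq:phase-III}.
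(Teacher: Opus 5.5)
Your proposal is correct. Its two analytic inputs are the same as the paper's:
\begin{itemize}
\item the comparison of terms of $\bar w$-degree $2$ in the kernel identity;
\item the bidegree-$(3,3)$ comparison on the slice $a=0$. This forces the mixed minors $x_jW_{ik}-x_kW_{ij}$ and $y_iW_{kj}-y_kW_{ij}$ (resp.\ $\mathsf S_\xi$ in Type III) to vanish, and hence forces the unitary $U$ with $W_2=U(P)$ to be $\lambda I$.
\end{itemize}

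Where you differ is in how you close the argument. The paper does not run a jet-by-jet induction. It reads off the $\bar w$-degree-$2$ comparison for all $z$-degrees at once, using that the corner block of $Q_2(G(z))$ is $aW(z)-P(z)$, and obtains the closed identity $(U^*+aI)W=P$. Your Step 1 recursion is exactly the graded form of this identity: $U^*W_2=P$ and $W_j=-aUW_{j-1}$ for $j\ge3$. With $U=\lambda I$ it sums to $W=\lambda P/(1+\lambda a)$. So the paper derives the model formula purely as a necessary condition and never needs to know beforehand that $M_\lambda$ is an isometry.

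Your route instead proves uniqueness from the $2$-jet and then imports existence. Step 2 requires that $M=M_1$ is a normalized isometry (Mok's embedding, or the explicit formulas of Xiao--Yuan), transported to $M_\lambda$ by the phase-removal automorphisms. That is a legitimate citation, but it is an extra dependency the paper's proof avoids. In exchange you get a clean, type-independent principle: a normalized isometry whose tangent plane has $\dim R=d^\#$ is determined by its $2$-jet. The cost is that the argument does not itself produce the formula, although solving your own recursion would give it and make Step 2 unnecessary.

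A smaller difference is in Type I. Applying the cubic identity of Proposition~\ref{prop:cubic-common} on $a=0$ works as you describe. The corner block of $\widetilde Q_2(A_0z,G_2(z))$ is $aW_2$, which vanishes at $a=0$, so the whole vector already lies in $R^\perp$ and the projection does nothing. The paper instead first uses $W=UP$ on $a=0$, so that $G$ is quadratic there, and then compares $(3,3)$ terms directly in the full kernel identity.
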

\begin{proof}
Applying \eqref{eq:normal-common} to 
 $A=A_0:\mathbb C^d \rightarrow\mathcal M$ and $E=E_0$, we get 
$A_0^*G(z)=z$. This is equivalent to 
$ A_0^*\bigl(G(z)-A_0z\bigr)=0$ as well as $ G(z)-A_0z\in E_0^\perp.$
Write $G(z)=A_0z+H(z)$ for  $H(z)\in E_0^\perp.$
It thus follows that
$$ G(a, x, y)=\begin{pmatrix}a&x\\y&W(a, x, y)\end{pmatrix}$$
in the case of  Type I, and 
$$G(a, \zeta)=\begin{pmatrix}a& \frac{\zeta^t}{\sqrt2}\\ \frac{\zeta}{\sqrt2}&W(a, \zeta)\end{pmatrix}$$
in the case of Type III. 
Moreover,   letting
$\mathcal W=M_{n-1, m-1}(\C)$ in Type I and  
$\mathcal W=\Sym_{n-1}(\C)$ in Type III,   $W:\B^d\to\mathcal W$ is
holomorphic with $W(0)=0$ and $\mathrm{d}W_0=0$. 
Write $W=\sum_{j\ge2}W_j$ for its homogeneous Taylor expansion and set
$P(z)=yx$ in Type I, $P(z)=\frac{\zeta \zeta^t}{2}$ in Type III, 
to be a matrix-valued quadratic polynomial. 
Let $ \iota_N:\mathcal W\rightarrow E_0^\perp$ be the isometric embedding given by $ B\mapsto
   \begin{pmatrix}0&0\\0&B\end{pmatrix}$. We consider corner minors of $G$ given by $\left(
[C_2(G)]_{\{1,i+1\},\,\{1,j+1\}}
\right)_{i,j}$ and other \(2\times2\) minors are precisely blocks of $C_2(W)$, or mixed minors $x_jW_{ik}-x_kW_{ij}$ and $y_iW_{kj}-y_kW_{ij}$. 
   In both Type I and III cases, it is easy to verify that corner minors of $G$ are given by $aW-P.$
  On the other hand, for the linear map \(A_0z\), since all the lower-right blocks are zero,  
  the corner block of $C_2(A_0z)=-P$,  while all other \(2\times2\) minors vanish.
  Let $ \iota_C:\mathcal W\to\mathcal H_2$ be the linear map defined by
    \(\iota_C(B)\) to be the compound-coordinate matrix whose corner block is \(B\) and whose other entries are zero, for any $B \in \mathcal W$. By direct calculation, it is easy to verify that $ \iota_C$ is also an isometric embedding. 
By (\ref{eq:quadratic-common}), we have 
 $\|Q_2(A_0z)\|^2= \|G_2(z)\|^2$, where $G_2$  is  the quadratic term of $G$.
 Since   $ G_2(z)=\iota_N(W_2(z))$ and $ Q_2(A_0z)=-\iota_C(P(z))$, it follows that $\|W_2(z)\|^2 = \|P(z)\|^2$. 
 Now we verify that the values of \(P\) span \(\mathcal W\).
In Type I, \(P(a,x,y)=yx\) and \(\mathcal W=M_{n-1,m-1}(\mathbb C)\). By choosing
$a=0, y=t e_i, x=t e_j^t$, where \(t>0\) is sufficiently small, 
$P(a,x,y)=t^2E_{ij}$, where \(E_{ij}\) denotes the elementary matrix with a single \(1\) in position \((i,j)\). 
In Type III, $P(a,\zeta)=\frac{\zeta\zeta^t}{2}, \mathcal W=\operatorname{Sym}_{n-1}(\mathbb C).$
Taking \(a=0\) and sufficiently small \(t\), we have $P(0,t e_i)=\frac{t^2}{2}E_{ii}$, and, for \(i<j\), $P(0,t(e_i+e_j))-P(0,t e_i)-P(0,t e_j)=\frac{t^2}{2}(E_{ij}+E_{ji}).$
In both cases, we show that the values of \(P\) span \(\mathcal W\).  
   By Lemma \ref{lem:gram-common}, there exists a unitary matrix $ U:\mathcal W\rightarrow\mathcal W$ such that 
$ W_2(z)=U(P(z))$.

Now, write $G(z)=A_0z+\iota_N(W(z))$ where $W(z)=W_2(z)+W_3(z)+\cdots$, with $W_2(z)=U(P(z))$.
It follows from (\ref{eq:generic-common}) and (\ref{eq:kernel-common}) that 
\begin{equation}\label{kernel}
1-\langle G(z),G(w)\rangle
+\langle Q_2(G(z)),Q_2(G(w))\rangle
-\langle Q_3(G(z)),Q_3(G(w))\rangle+\cdots
=1-\langle z,w\rangle.
\end{equation}
Fix \(z\).  Since $A_0 z$ is orthogonal to $\iota_N (W(w))$, the contribution of
$\langle G(z),G(w)\rangle$ to the quadratic terms in  \(\bar w\) is $\langle W(z),W_2(w)\rangle
=\langle W(z),U(P(w))\rangle.$ The contribution of the remaining terms on the left hand side to the quadratic terms in  \(\bar w\) is from $\langle Q_2(G(z)),Q_2(G(w))$, which is given by 
\begin{equation*}
\begin{split}
&~~~\langle Q_2(G(z)), Q_2(A_0 w)\rangle = \langle Q_2(G(z)),-\iota_C(P(w))\rangle \\
&=-\langle aW(z)-P(z),P(w)\rangle=\langle P(z)-aW(z),P(w)\rangle. 
\end{split}
\end{equation*}
Consequently, $\langle W(z),U(P(w))\rangle =\langle P(z)-aW(z),P(w)\rangle$.
Thus
$ U^*W(z)-P(z)+aW(z)$ is orthogonal to $P(w)$ for every \(w\).
Since  $U^*W(z)-P(z)+aW(z) \in \mathcal W$ and 
the values \(P(w)\) span \(\mathcal W\), it follows that  \(U^*W(z)-P(z)+aW(z)=0\), yielding $(U^*+aI)W=P.$
Multiplying $(U^*+aI)W=P$ by \(U\), we have
$W+aUW=UP.$
In particular, restricting to the hyperplane $a=0$, $W=UP$ and thus $G$ is of degree at most 2.

Finally, we restrict everything to the hyperplane $a=0$ and then compare the bidegree $(3,3)$ terms in (\ref{kernel}). It is easy to see that the only contribution on both sides is from the term $\langle Q_2(G(z)),Q_2(G(w))\rangle$. By classifying all \(2\times2\) minors of \(G\), we see that in the Type I case,  the only degree 3 minors are
$  \det\begin{pmatrix}
  x_j&x_k\\
  W_{ij}&W_{ik}
  \end{pmatrix}
  =x_jW_{ik}-x_kW_{ij}$ for 
$ j<k$ and $  \det\begin{pmatrix}
  y_i&W_{ij}\\
  y_k&W_{kj}
  \end{pmatrix}
  =y_iW_{kj}-y_kW_{ij}$ for
$ i<k$; and $\sum_i \sum_{j<k}|\zeta_jW_{ik}-\zeta_kW_{ij}|^2$ in the Type III case. 
By setting \(w=z\), it follows 
$$\sum_i \sum_{j<k}|x_jW_{ik}-x_kW_{ij}|^2
+
\sum_j \sum_{i<k}|y_iW_{kj}-y_kW_{ij}|^2
=0$$ in the Type I case and 
$$\sum_i \sum_{j<k}|\zeta_jW_{ik}-\zeta_kW_{ij}|^2=0$$
in the Type III case. 
In Type I, we have $x_jW_{ik}-x_kW_{ij}=0$ for $j < k$ and  $ y_iW_{kj}-y_kW_{ij}=0$ for $i < k$. 
Fix \(x\ne0\) and choose \(j_0\) with \(x_{j_0}\ne0\). Then the first identity implies $W_{ik}=\frac{W_{i j_0}}{x_{j_0}}x_k.$
So $W=vx$ for some column \(v\in\mathbb C^{n-1}\). 
Substituting this into the second identities, and using the column \(j_0\), it follows that 
$(y_iv_k-y_kv_i)x_{j_0}=0.$ Hence
$y_iv_k-y_kv_i=0.$
For \(y\ne0\), this implies \(v\in\mathbb Cy\). Consequently,
$U(yx)\in\mathbb C(yx).$ 
In Type III, the similar argument yields $U(\zeta\zeta^t)\in\mathbb C(\zeta\zeta^t)$. 
It follows from  the linear algebra argument that $U=\lambda I_\mathcal W$ for $|\lambda|=1$ since $U$ is unitary.  

Using $(U^*+aI)W=P$, we obtain 
\[
W=\frac{1}{\bar\lambda+a}P=\frac{\lambda}{1+\lambda a}P.
\]
This is \eqref{eq:phase-I} or \eqref{eq:phase-III}, respectively.
\end{proof}

\subsection{Type II case}
\label{II:reconstruction}

Let $n\geq5$ and let $U=\operatorname{span}(e_3,\ldots,e_n)$.
The source ball $\B^d=\B^{2n-3}$ is defined by $|s|^2+\|u\|^2+\|v\|^2<1$.
We follow the exactly the same strategy in the proof as before. 

\begin{proposition}\label{II:integration}
Suppose $G(0)=0$ and $\mathrm{d}G_0=A_0$ in Type II.
Then there is a scalar $\lambda$ with $|\lambda|=1$ such that 
\begin{equation}\label{II:model}
 G(s,u,v)=M^{II}_{\lambda}(s,u,v)
 =s e_1\w e_2+e_1\w u+e_2\w v
       -\frac{\lambda}{1-\lambda s}\,u\w v.
\end{equation}
\end{proposition}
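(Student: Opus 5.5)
We follow the strategy of Proposition~\ref{prop:reconstruction}. By \eqref{eq:normal-common} applied with $A=A_0$ and $E=E_0=P\w V$, where $P=\Span(e_1,e_2)$, we have $G(z)-A_0z\in E_0^\perp=\Lambda^2U$. Hence
\[
G(s,u,v)=s e_1\w e_2+e_1\w u+e_2\w v+W(s,u,v),
\]
where $W:\B^d\to\Lambda^2U$ is holomorphic and $W=O(2)$. The quadratic map is $Q_2(Z)=\tfrac12 Z\w Z$. For the linear part we get $Q_2(A_0z)=-e_1\w e_2\w u\w v$, whose values span $S=e_1\w e_2\w\Lambda^2U$. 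The map $\iota_C:B\mapsto -e_1\w e_2\w B$ is an isometric embedding $\Lambda^2U\to\Lambda^4V$, with $Q_2(A_0z)=\iota_C(u\w v)$. Put $P(z)=u\w v$. Its values span $\Lambda^2U$, as one sees by taking $u=te_i$, $v=te_j$. Lemma~\ref{lem:quadratic-common} combined with Lemma~\ref{lem:gram-common} then gives a unitary $\mathcal U$ of $\Lambda^2U$ with $W_2(z)=\mathcal U(P(z))$.

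Next we extract the linear relation from the bidegree $(\ast,2)$ part in $\bar w$ of \eqref{eq:kernel-common}, holding $z$ fixed. On the left the contributions are $-\langle W(z),\mathcal U P(w)\rangle$ from the $Q_1$ term and $\langle Q_2(G(z)),Q_2(A_0w)\rangle$ from the $Q_2$ term. For higher $k$, $Q_k(G(w))$ has degree $\ge k\ge3$ in $\bar w$, so these do not contribute. We then compute the $S$-component of $Q_2(G(z))$. Expanding $\tfrac12 G\w G$, the terms lying in $e_1\w e_2\w\Lambda^2U$ come from $s e_1\w e_2\w W$ and from $(e_1\w u)\w(e_2\w v)$. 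The result is
\[
P_S Q_2(G(z))=e_1\w e_2\w\bigl(sW(z)-u\w v\bigr)=\iota_C\bigl(u\w v-sW(z)\bigr).
\]
Pairing this with $\iota_C(P(w))$ and using that the values $P(w)$ span $\Lambda^2U$ yields
\[
\mathcal U^*W=-(P-sW),\qquad\text{that is,}\qquad (\mathcal U^*-sI)W=-P.
\]
On the hyperplane $s=0$ this gives $W=-\mathcal U P$, so $G$ is quadratic there. The signs need a careful check; we expect the outcome to match the minus sign in \eqref{II:model}.

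The remaining step, which we regard as the main one, is to show $\mathcal U=\lambda I$. We restrict to $s=0$ and compare bidegree $(3,3)$ terms in \eqref{eq:kernel-common} at $w=z$. The right-hand side contributes nothing, and $Q_3(G)$ has degree at least $3$ in each variable, so a careful degree count is needed there. The candidate contributors are $\|Q_3\|^2$ at degree $(3,3)$, which is $\|Q_3(A_0z)\|^2=0$ since $A_0z\in P\w V$ has rank at most $4$, and the cubic part of $Q_2(G)$, namely $e_1\w u\w W+e_2\w v\w W$. We thus expect
\[
\|e_1\w u\w \mathcal U(u\w v)\|^2+\|e_2\w v\w \mathcal U(u\w v)\|^2=0,
\]
hence $u\w\mathcal U(u\w v)=0=v\w\mathcal U(u\w v)$ for all $u,v\in U$. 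Since $\dim U\ge3$, an element $\omega\in\Lambda^2U$ with $u\w\omega=v\w\omega=0$ must be a multiple of $u\w v$. This is the Type~II analogue of the divisibility argument, and it can be checked via Lemma~\ref{II:linear-wedge}, or directly because $u\w\omega=0$ forces $\omega=u\w a$. Therefore $\mathcal U$ preserves every decomposable line $[u\w v]$. The eigenvalue is constant by connectedness of the decomposable cone, and since decomposables span $\Lambda^2U$, $\mathcal U=\lambda I$ with $|\lambda|=1$. Solving $(\bar\lambda-s)W=-u\w v$ then gives $W=-\frac{\lambda}{1-\lambda s}u\w v$, which is \eqref{II:model}. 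The delicate points are verifying that no other terms enter the $(3,3)$ comparison and fixing the sign conventions in $P_S Q_2(G)$.
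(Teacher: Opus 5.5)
Your proposal is essentially the paper's proof: the same block decomposition of $Q_2(G)$, the unitary on $\Lambda^2U$ from the quadratic identity, the linear relation read off from the part of degree two in $\bar w$, and the $(3,3)$ comparison forcing $u\wedge\mathcal U(u\wedge v)=v\wedge\mathcal U(u\wedge v)=0$. The one difference is that you restrict to $s=0$ for the last step, as the paper does in Types I and III, whereas the paper keeps $s$ and uses $h_3=s\mathcal T h_2$ to cancel $\|h_3\|^2$ against the $e_1\wedge e_2\wedge sh_2$ block.

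The sign you flagged is indeed wrong. The pairing gives $-\langle \mathcal U^*W(z),P(w)\rangle+\langle P(z)-sW(z),P(w)\rangle=0$, that is, $\mathcal U^*W=P-sW$; your version even contradicts your own $W_2=\mathcal U P$. With $\mathcal U=\mu I$ one then gets $W=\frac{\mu}{1+\mu s}\,u\wedge v$, which is \eqref{II:model} with $\lambda=-\mu$, so the conclusion is unaffected.
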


\begin{proof}
By \eqref{eq:normal-common}, we may write
$G(s,u,v)=A_0(s,u,v)+h(s,u,v),$ with $ \mathrm{d}G_0(s,u,v)=A_0(s,u,v)=s\,e_1\wedge e_2+e_1\wedge u+e_2\wedge v, $
where $h\in\Lambda^2U, h=O(2)$. The normal space $E_0^\perp$ is $\Lambda^2U$. 
Let $h_j$ denote the term of total holomorphic degree $j$ in the
Taylor expansion of $h$ at $(s,u,v)=0$. Recall that
 $Q_k(Z)=\frac{1 }{k!}Z^{\w k}\in\Lambda^{2k}\mathbb C^n$ for
$1\leq k\leq\lfloor n/2\rfloor$. Then $Q_2$ splits into
mutually orthogonal  blocks:
\begin{equation}\label{II:q2-decomposition}
 \begin{split}
 Q_2(G)&=e_1\w e_2\w Q+e_1\w R+e_2\w S+\tfrac12h\w h,
 \end{split}
\end{equation}
where $Q=sh-u\w v, R=u\w h, S=v\w h$.
In particular, $Q_2(A_0(s,u,v)) =-e_1\w e_2\w u\w v$.
Define  $J:\Lambda^2U\longrightarrow\Lambda^4\mathbb C^n$,  given by $J(\beta)=e_1\wedge e_2\wedge\beta$.
Thus $Q_2(A_0(s,u,v))=-J(u\wedge v).$
Since \(\Lambda^2U\) is spanned by $u\wedge v$ for $u, v \in U$, $\operatorname{span}\{Q_2(A_0z):z\in\mathbb B^d\} =J(\Lambda^2U) =e_1\wedge e_2\wedge\Lambda^2U$, denoted by $R_0$. It is obvious that $J: \Lambda^2U \to R_0$ is 
an isometric isomorphism. By (\ref{eq:quadratic-common}), there is a linear isometric embedding $T: R_0 \to \Lambda^2U$ such that $h_2(z) = T Q_2(A_0 z)$. 
Thus $\mathcal T= T \circ J: \Lambda^2U\rightarrow\Lambda^2U$ is an unitary transformation on $\Lambda^2U$. In particular, 
$h_2(z) =-{\mathcal T} u \wedge v$
and
$\operatorname{span}\{h_2(s,u,v)\}
={\mathcal T} \bigl(\operatorname{span}\{u\wedge v\}\bigr)
=\Lambda^2U.$

Write $ w=(\sigma,\xi,\eta)$, where \(\sigma\in \C, \xi,\eta\in U\). 
The contribution to the quadratic terms of $\bar w$ in (\ref{kernel}) are merely from $\langle G(z),G(w)\rangle$ is $\langle G(z),h_2(w)\rangle  =\langle h(z),h_2(w)\rangle$ since $h_2(w)$ is orthogonal to $G(z)-h(z) \in E_0$, from $\langle Q_2(G(z)),Q_2(G(w))\rangle$
is $\left\langle Q_2(G(z)),-J(\xi\wedge\eta)\right\rangle =\left\langle J(Q(z)),-J(\xi\wedge\eta)\right\rangle =\left\langle Q(z),-\xi\wedge\eta\right\rangle$.
It follows that $\langle h(z),h_2(w)\rangle =\langle Q(z),-\xi\wedge\eta\rangle.$ Noting \(h_2(w)=-\mathcal T(\xi\wedge\eta)\), we have  $\langle {\mathcal T}^*h(z)-Q(z), \xi\wedge\eta\rangle=0$ for every $\xi\wedge\eta \in \Lambda^2U$. Since ${\mathcal T}^*h(z)-Q(z) \in \Lambda^2U$, it follows that ${\mathcal T}^*h(z)-Q(z) =0$, implying $h=\mathcal TQ=\mathcal T(sh-u\wedge v).$
Comparing  degree 3 terms, we have $h_3=s \mathcal T h_2$. 
Moreover, $h$ satisfies the equation  $(I-s \mathcal T)h=-\mathcal T(u\wedge v).$
Since \(I-s \mathcal T\) is invertible for \(|s|<1\),  we have
\begin{equation}\label{albert}
h(s, u, v)=-(I-s \mathcal T)^{-1} \mathcal T(u\wedge v).
\end{equation}

Noting the linear term of $G(z)$ is $A_0 z= s\,e_1\wedge e_2+e_1\wedge u+e_2\wedge v$, the degree 3 terms in $Q_3(G(z))=\frac1{3!} G(z)\wedge G(z)\wedge G(z)$ must vanish. 
In addition, By (\ref{II:q2-decomposition}),  the degree 3 terms in $Q_2(G(z))$ is given by $e_1\wedge e_2\wedge sh_2 +e_1\wedge u\wedge h_2  +e_2 \wedge v\wedge h_2$. 
By comparing bidegree \((3,3)\) terms in (\ref{kernel}),  we get 
$$\|h_3(z)\|^2 =\|sh_2(z)\|^2 +\|u\wedge h_2(z)\|^2 +\|v\wedge h_2(z)\|^2.$$
Using $\|h_3\|^2 =|s|^2\|\mathcal T h_2\|^2=|s|^2\|h_2\|^2 $ as $\mathcal T$ is unitary, it follows $u\wedge \mathcal T(u\wedge v)=0, v\wedge \mathcal T(u\wedge v)=0$ for all $u, v \in U$. 
It then follows from the linear algebra argument that $\mathcal T =\lambda I_{\Lambda^2U}$ for $|\lambda|=1$ as $\mathcal T$ is unitary. By (\ref{albert}),
$h(s, u, v)=-\frac{\lambda}{1-\lambda s}\,u\w v,$ yielding \eqref{II:model}.
\end{proof}

\subsection{Proof of Theorem \ref{thm:main}}
\label{sec:phase-and-completion}

\begin{lemma}\label{lem:phase-removal}
For each of the three types and each \(\lambda\in\C\) with
\(|\lambda|=1\), there are 
\(\sigma_\lambda \in {\rm Aut}(\B^d)\) and \(\tau_\lambda \in {\rm Aut}(D)\) such that
$ M_\lambda=\tau_\lambda\circ M\circ\sigma_\lambda$,
where \(M=M_1\) is given in
Theorem~\ref{thm:main}.
\end{lemma}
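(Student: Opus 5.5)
The plan is to remove the phase $\lambda$ by an explicit pair of automorphisms: a diagonal unitary rotation of the source ball, together with a unitary linear automorphism of the target that rescales the model blocks by suitable phases. Write $\lambda=e^{i\theta}$. The guiding observation is that in each model the only place $\lambda$ enters is through the combination $\lambda a$ (or $\lambda s$) in the denominator and an overall factor $\lambda$ on the quadratic block. So I will rotate $a\mapsto \lambda a$ in the source and then compensate on the remaining coordinates.

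\emph{Type I.} Take $\sigma_\lambda(a,x,y)=(\lambda a,x,y)$, which is unitary and hence in $\Aut(\B^d)$. Then
\[
M(\sigma_\lambda(a,x,y))=\begin{pmatrix}\lambda a&x\\ y&\dfrac{yx}{1+\lambda a}\end{pmatrix}.
\]
Next apply $\tau_\lambda(Z)=LZR$ with $L=\operatorname{diag}(\bar\lambda,I_{n-1})$ and $R=\operatorname{diag}(1,\lambda I_{m-1})$, both unitary. The result is
\[
\begin{pmatrix}a&\bar\lambda\lambda x\\ y&\dfrac{\lambda yx}{1+\lambda a}\end{pmatrix}=\begin{pmatrix}a&x\\ y&\dfrac{\lambda yx}{1+\lambda a}\end{pmatrix}=M^I_\lambda(a,x,y).
\]

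\emph{Type III.} We need congruences $Z\mapsto LZL^t$, so choose $L=\operatorname{diag}(\mu,\nu I_{n-1})$ with unimodular $\mu,\nu$. Take $\mu^2=\bar\lambda$, so that $a$ is recovered from $\lambda a$. The off-diagonal entry picks up $\mu\nu$, and we want the lower block to pick up $\nu^2=\lambda$. This is consistent because $\mu\nu=\bar\lambda^{1/2}\lambda^{1/2}=1$ for compatible square roots, namely $\mu=e^{-i\theta/2}$ and $\nu=e^{i\theta/2}$. So $\tau_\lambda(Z)=LZL^t$ with $\sigma_\lambda(a,\zeta)=(\lambda a,\zeta)$ works.

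\emph{Type II.} The model is $M^{II}$, and $M^{II}_\lambda$ has $-\lambda u\w v/(1-\lambda s)$. Take $\sigma_\lambda(s,u,v)=(\lambda s,u,v)$ and $\tau_\lambda=\Lambda^2 g$ with $g=\operatorname{diag}(\mu,\mu,\nu I_{n-2})$, which acts on skew matrices by $Z\mapsto gZg^t$. Then $e_1\w e_2$ scales by $\mu^2$, $e_1\w u$ and $e_2\w v$ scale by $\mu\nu$, and $u\w v$ scales by $\nu^2$. The same choice $\mu=e^{-i\theta/2}$, $\nu=e^{i\theta/2}$ gives $\mu^2\lambda=1$, $\mu\nu=1$ and $\nu^2=\lambda$, as required.

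In all three cases $\tau_\lambda$ is a unitary linear map of the form $Z\mapsto LZR$ or $Z\mapsto LZL^t$ preserving $\mathcal M$. Such maps preserve $I-ZZ^*>0$, so $\tau_\lambda\in\Aut(D)$. There is no real obstacle here; the only point needing care is the consistent choice of square roots of $\lambda$ in Types II and III, which the choice above handles.
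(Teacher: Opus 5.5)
Your proposal is correct and is essentially the paper's argument: an explicit unitary rotation of the source composed with a unitary linear target automorphism, verified by direct substitution. All three of your computations check out.

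The only difference is how the phases are distributed. You rotate only $a$ (resp.\ $s$) in the source and absorb the remaining phases into two-sided diagonal actions on the target. The paper instead uses:
\begin{itemize}
\item in Type I, $\sigma_\lambda(a,x,y)=(\lambda a,x,\lambda y)$ with $Z\mapsto Z\operatorname{diag}(\bar\lambda,I_{m-1})$;
\item in Type II, $\sigma_\lambda=\lambda\cdot\mathrm{id}$ with $Z\mapsto\lambda^{-1}Z$;
\item in Type III, $\sigma_\lambda(a,\zeta)=(\lambda a,\eta\zeta)$ with $\eta^2=\lambda$ and congruence by $\operatorname{diag}(\eta^{-1},I_{n-1})$.
\end{itemize}
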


\begin{proof}
If $D$ is of Type I, let $\sigma_\lambda(a,x,y)=(\lambda a,x,\lambda y)$, 
$ \tau_\lambda(Z)=Z\operatorname{diag}(\bar\lambda,I_{m-1}).$
If $D$ is of Type II, let  $\sigma_\lambda(s,u,v)=(\lambda s,\lambda u,\lambda v)$,
$\tau_\lambda(Z)=\lambda^{-1}Z$. If $D$ is of Type III, choose \(\eta\in\C\) with \(\eta^2=\lambda\) and let $\sigma_\lambda(a,\zeta)=(\lambda a,\eta\zeta),$ $\tau_\lambda(Z)=\left(\operatorname{diag}(\eta^{-1},I_{n-1})\right) Z \left({\operatorname{diag}(\eta^{-1},I_{n-1})}\right)^t.$ It is easy to verify that $\sigma_\lambda  \in {\rm Aut}(\B^d), \tau_\lambda \in {\rm Aut}(D)$ and $ M_\lambda=\tau_\lambda\circ M\circ\sigma_\lambda$.
\end{proof}

\begin{proof}[Proof of Theorem~\ref{thm:main}]
Theorem \ref{thm:main} is proved by combining Proposition~\ref{prop:exact-tangent}, Proposition~\ref{prop:reconstruction},   Proposition~\ref{II:integration} and Lemma \ref{lem:phase-removal}.
\end{proof}

{\bf Acknowledgement.} The first author would like to thank Professor Ngaiming Mok for many helpful discussions on this problem and for the encouragement over the years. C. Ding was partially supported by start-up funding from Shenzhen University and Shenzhen Peacock Plan. Y. Yuan was partially supported by Zhejiang
Provincial Natural Science Foundation of China (Grant No. LQKWL26A0201).

\medskip

{\bf AI Statement} This paper is motivated by a desire to understand the methods employed by Yang \cite{Y17} and Mok \cite{M18, M26}. The overall strategy follows Mok’s framework. ChatGPT 6 Astra was used to improve the exposition of the manuscript and to assist in developing some of the mathematical arguments. The authors take full responsibility for the mathematical content of the paper.

\bigskip

\noindent Cong Ding, congding@szu.edu.cn, School of Mathematical Sciences, Shenzhen University, Shenzhen 518060, Guangdong, China

\medskip

\noindent Yuan Yuan, yuanyuan@westlake.edu.cn, Institute for Theoretical Sciences, Westlake University, 
Hangzhou 310024, Zhejiang, China

\end{document}